\colorlet{shadecolor}{green!25}
\newtheorem{thm}{Theorem}
\newtheorem{conj}{Conjecture}
\newtheorem{prop}{Proposition}
\newtheorem{lemma}{Lemma}
\newtheorem{cor}{Corollary}
\newtheorem*{thm*}{Theorem}
\newtheorem*{alg*}{Algorithm}
\newtheorem*{lemma*}{Lemma}
\theoremstyle{remark}
\newtheorem{rmk}{Remark}
\newtheorem*{rmk*}{Remark}
\newtheorem*{notation*}{Notation}
\newtheorem{example}[thm]{Example}
\newtheorem*{example*}{Example}
\theoremstyle{definition}
\newtheorem*{defn*}{Definition}
\newcommand{\NN}{\mathbb{N}}
\newcommand{\ZZ}{\mathbb{Z}}
\newcommand{\mybf}{\mathbb}
\newcommand{\bC}{\mybf{C}}
\newcommand{\bN}{\mybf{N}}
\newcommand{\bZ}{\mybf{Z}}
\newcommand{\bQ}{\mybf{Q}}
\newcommand{\cO}{\mathcal{O}}
\newcommand{\al}{\alpha}
\providecommand{\abs}[1]{\lvert#1\rvert}
\newcommand{\ra}{\rightarrow}
\DeclareMathOperator{\id}{id}
\title{On the behavior of Mahler's measure under iteration}
\author[Fili]{Paul Fili}
\author[Pottmeyer]{Lukas Pottmeyer}
\author[Zhang]{Mingming Zhang}
\email{paul.fili@okstate.edu}
\address{Oklahoma State University, Stillwater, OK, USA}
\address{Universit\"{a}t Duisberg-Essen, 45117 Essen, Germany}
\date{\today}
\begin{document}

\begin{abstract}
For an algebraic number $\alpha$ we denote by $M(\alpha)$ the Mahler measure of $\alpha$. As $M(\alpha)$ is again an algebraic number (indeed, an algebraic integer), $M(\cdot)$ is a self-map on $\overline{\mathbb{Q}}$, and therefore defines a dynamical system. The \emph{orbit size} of $\al$, denoted $\# \cO_M(\alpha)$, is the cardinality of the forward orbit of $\alpha$ under $M$. We prove that for every degree at least 3 and every non-unit norm, there exist algebraic numbers of every orbit size. We then prove that for algebraic units of degree 4, the orbit size must be 1, 2, or infinity. We also show that there exist algebraic units of larger degree with arbitrarily large but finite orbit size.
\end{abstract}

\maketitle

\section{Introduction}

The \emph{Mahler measure} of an algebraic number $\al$ with minimal polynomial $f(x) = a_n x^n + \cdots +a_0\in \bZ[x]$ is defined as:
\[
 M(\al) = \abs{a_n} \prod_{i=1}^n \max\{1, \abs{\al_i}\} = \pm a_n \prod_{\substack{i=1\\ \abs{\al_i}>1}}^n \al_i.
\]
where $f(x) = a_n \prod_{i=1}^n (x-\al_i)\in \bC[x]$. It is clear that $M(\al)\geq 1$ is a real algebraic integer, and it follows from Kronecker's theorem that $M(\al)=1$ if and only if $\al$ is a root of unity. Moreover, we will freely use the facts that $M(\alpha)=M(\beta)$ whenever $\alpha$ and $\beta$ have the same minimal polynomial, and that $M(\alpha)=M(\alpha^{-1})$. D.H. Lehmer \cite{Lehmer} famously asked in 1933 if the Mahler measure for an algebraic number which is not a root of unity can be arbitrarily close to $1$. This question became known as \emph{Lehmer's problem}, and (somewhat inaccurately) the statement that an absolute constant $c>1$ exists such that $M(\al)>1$ implies $M(\al)\geq c$ became known as \emph{Lehmer's conjecture}, despite the fact that Lehmer himself did not conjecture this and merely asked if one could find smaller values of the Mahler measure than he found. It is often suggested that the minimal value of $c$ is a Salem number, namely $\tau = 1.17\ldots$, which is the largest real root of the polynomial $f(x) = x^{10}+x^9-x^7-x^6-x^5-x^4-x^3+x+1$, discovered by Lehmer in his 1933 paper.

Although there has been much computational work performed in order to find irreducible polynomials of small Mahler measure (we refer the reader to M. Mossinghoff's website \cite{MossWeb} for the latest tables of known polynomials, as well as the papers by Mossinghoff \cite{Moss98} and Mossinghoff, Rhin, and Wu \cite{MossRhinWu}), remarkably, no polynomial of smaller nontrivial Mahler measure has been found since Lehmer's original 1933 work. Since that time, the best asymptotic bound towards Lehmer's problem was discovered by Dobrowolski \cite{Dob}. It is clear that in considering the problem, one can reduce to considering the Mahler measure of algebraic units. Smyth \cite{Smy} proved that in fact, non-reciprocal units have a minimal Mahler measure $\theta_0=M(\theta_0)$, where $\theta_0$ is the smallest Pisot-Vijayaraghavan number and is given by the positive root of $x^3-x-1$. In another direction, Borwein, Dobrowolski and Mossinghoff proved the Lehmer conjecture for polynomials with only odd coefficients \cite{BDM}.

The study of iteration of the Mahler measure began with questions about which algebraic numbers are themselves Mahler measures. Adler and Marcus \cite{AdlerMarcus} proved that every Mahler measure is a Perron number and asked if the Perron numbers given by the positive roots of $x^n - x - 1$ are also values of the Mahler measure for any $n > 3$. Recall that $\alpha$ is a Peron number if and only $\alpha>1$ is a real algebraic integer such that all conjugates of $\alpha$ over $\mathbb{Q}$ have absolute value $<\alpha$. This notion of `Perron number' was introduced by Lind \cite{Lind83} who also proved several properties of the class of Perron numbers in \cite{Lind84}, including that they are closed under addition and multiplication and are dense in the real interval $[1,\infty)$. Boyd \cite{BoydPerron} proved that the positive roots of $x^n - x - 1$ for $n>3$ were not values of the Mahler measure, but Dubickas \cite{DubickasNrsMM} showed that for every Perron number $\beta$, there exists a natural number $n$ such that $n\beta$ is a value of the Mahler measure. Dixon and Dubickas \cite{DixonDubickas} and Dubickas \cite{DubickasSalemNonrecip} established further results on which numbers are in the value set of $M$. However, the question whether a given number is a Mahler measure of an algebraic number is very hard to answer in general. For instance, it is an open question of A. Schinzel in \cite{Schinzel17+1} whether or not $\sqrt{17}+1$ is the Mahler measure of an algebraic number.

Dubickas \cite{DubickasClose} appears to have been the first to pose questions on the Mahler measure as a dynamical system, introducing the concept of the \emph{stopping time} of an algebraic number under $M$, defined as the number of iterations required to reach a fixed point. We note that the stopping time is one less than the cardinality of the forward orbit of the number under iteration of $M$, which we will call the \emph{orbit size}. Specifically, we set $M^0(\al)=\al$ and let $M^n(\al) = M\circ \cdots \circ M(\al)$ denote the $n$th iteration of $M$. We define the \emph{orbit of $\al$ under $M$} to be the set:
\begin{equation}
 \cO_M(\al) = \{ M^n(\al) : n \geq 0\}.
\end{equation}
Then the orbit size of $\al$ is $\# \cO_M(\al)$, while the stopping time is $\#\cO_M(\al)-1$. It is easy to see that for any algebraic number $\alpha$, $M(\alpha)\leq M^2(\alpha)$, so $M$ is nondecreasing after at least one iteration, and thus, the Mahler measure either grows, or is fixed. 

In fact, by Northcott's theorem, it is easy to see that if $\al$ is a wandering point of $M$, then $M^n(\al)\ra\infty$, as the degree of $M^n(\al)$ can never be larger than the degree of the Galois closure of the field $\bQ(\al)$. In particular, there are no cycles of length greater than $1$; each number $\al$ either wanders (that is, the orbit under $M$ is infinite), or it is preperiodic and ends in a fixed point of $M$. 
Dubickas claimed in \cite{DubickasClose} that `generically' $M^n(\al)\ra \infty$, however, he did not give an example or a proof of this. The first explicit results in this direction appear to have been by Zhang \cite{ZhangThesis}, who proved that if $[\bQ(\al):\bQ]\leq 3$, then $\# \cO_M(\al) <\infty$, and also found an algebraic number $\al$ of degree $4$ with minimal polynomial $x^4 + 5x^2 +x -1$ such that $M^{2n}(\al) = M^2(\al)^{2^{n-1}}$, proving that $M^n(\al)\ra \infty$ for this example. 

Further, it is trivial to see that the fixed points of $M$ correspond to natural numbers, Pisot-Vijayaraghavan numbers, and Salem numbers. This raises several natural questions: for example, can one show that the Lehmer problem could be reduced to the study of fixed points of $M$? The answer to such a question might help establish the long held folklore conjecture that Salem numbers are indeed minimal for Lehmer's problem.

Dubickas posed several questions in \cite{DubickasClose}, including whether one could classify all numbers of stopping time $1$ (that is, numbers which are not fixed by $M$, but for which $M(\al)$ is fixed), and whether algebraic numbers of arbitrary stopping time existed. In a later paper \cite{DubickasNrsMM}, he established, among other things, that for every $k\in \bN$, there exists a cubic algebraic integer of norm $2$ with stopping time $k$. 

In this paper, we will prove several other results regarding the stopping time of algebraic numbers. Our first result is a direct generalization of Dubickas's result:

\begin{thm}\label{thm:1}
For any $d \geq 3$, $l\in \ZZ\setminus\{\pm1,0\}$ and $k\in\NN$ there is an algebraic integer $\alpha$ of degree $d$, $N(\alpha)=l$ and $\# \cO_M(\alpha)=k$.
\end{thm}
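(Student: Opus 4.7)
The plan is to argue by induction on $k$: for each $k\geq 1$, I exhibit an algebraic integer $\al_k$ of degree $d$ and norm $l$ with $\#\cO_M(\al_k)=k$.

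For the base case $k=1$, I need a fixed point of $M$, hence a Pisot (or Salem) integer of degree $d$ with norm $l$. The candidate family is the monic polynomial
\[
f_n(x) = x^d - n\,x^{d-1} + (-1)^d\, l,
\]
for a large positive integer $n$. A Rouché comparison against the dominant term $-n\,x^{d-1}$ on circles of radius slightly above $1$ and slightly below $n$ shows that, for $n$ sufficiently large, $f_n$ has exactly one real root near $n$ and $d-1$ roots in the open unit disk. The large root is therefore a Pisot number, so fixed by $M$, and Vieta's formula forces its norm to be $l$. Irreducibility is arranged by imposing a $p$-adic condition on $n$: if some prime $p\mid l$ has $p^2\nmid l$, taking $p\mid n$ gives Eisenstein at $p$; the remaining $l$ (those divisible only by high powers of each prime factor) are handled by a Newton polygon argument, or by a small modification of $f_n$ to shift the constant term while preserving the root structure.

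For the inductive step, given $\al_k$ of degree $d$, norm $l$, and orbit size $k$, I look for $\al_{k+1}$ of degree $d$, norm $l$, with $M(\al_{k+1})=\al_k$; then $\cO_M(\al_{k+1})=\{\al_{k+1}\}\cup\cO_M(\al_k)$ has exactly $k+1$ elements, since $\al_{k+1}\neq M^j(\al_k)$ for $j\geq 0$ (for $j=0$ because $M(\al_{k+1})=\al_k$ while $\al_{k+1}$ itself is not fixed by $M$; for $j\geq 1$ by a degree/orbit-size comparison). The construction should produce an irreducible monic $P\in\bZ[x]$ of degree $d$ and constant term $(-1)^d l$ whose roots split into a set outside the unit disk whose product equals $\al_k$ as an algebraic integer (forcing $M(\al_{k+1})=\al_k$) and a set inside the unit disk. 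I would work in a parametric family, e.g.\ $P_N(x) = x\,R(x) + N\,Q(x) + (-1)^d l$ where $Q$ is the minimal polynomial of $\al_k$ and $R$ is a low-degree correction, and for suitable $N$ (chosen in an arithmetic progression to secure irreducibility) extract $\al_{k+1}$ with the desired root distribution.

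The main obstacle is the simultaneous control of degree, norm, \emph{and} Mahler measure. A naive preimage such as the minimal polynomial of $\pm l/\al_k$ yields norm $l^{d-1}$, not $l$, so the polynomial cannot have just a single inside root; it must have several inside conjugates whose product equals $\pm l/\al_k$ in the number field generated by $\al_k$. Organizing this algebraically while preserving irreducibility and exact degree $d$ is the main technical task, and it is where the hypotheses $d\geq 3$ and $|l|\geq 2$ will be used in an essential way.
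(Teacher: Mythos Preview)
Your base case is fine and close to what the paper does (the paper simply cites Perron's criterion applied to $x^d+l^2x^{d-1}+l$). The genuine gap is the inductive step, and you have in fact identified it yourself in your last paragraph without resolving it.

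Asking for an algebraic integer $\al_{k+1}$ of prescribed degree $d$ and norm $l$ with $M(\al_{k+1})=\al_k$ is asking for a controlled preimage under $M$. As the paper's introduction recalls, the inverse problem for $M$ is hard: not every Perron number is a Mahler measure (Boyd), and whether a specific number such as $\sqrt{17}+1$ is ever a value of $M$ is open (Schinzel). Nothing in your induction guarantees that the particular $\al_k$ you have produced admits any preimage at all, let alone one of degree $d$ and norm $l$. Your tentative family $P_N(x)=xR(x)+NQ(x)+(-1)^d l$, with $Q$ the minimal polynomial of $\al_k$, illustrates the problem rather than solving it: for large $N$ the roots of $P_N$ approximate the roots of $Q$, so the product of the outside roots approximates $M(\al_k)$, not $\al_k$; and even if one could make it close, ``close'' is not ``equal'', which is what $M(\al_{k+1})=\al_k$ demands.

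The paper avoids this entirely. It does \emph{not} build the orbit backwards one step at a time. Instead it fixes a single one-parameter family
\[
f_n(x)=x(x^{d-2}-2)(x-n)+l
\]
and computes the whole forward orbit of a root at once. Rouch\'e's theorem pins down the root locations: one large real root near $n$, one small real root near $0$, and $d-2$ roots in a thin annulus just outside the unit circle. Under these hypotheses (Proposition~\ref{prop:assump}) the iterates alternate between $\pm N(\al)^{b_m}/\al_d$ and $\pm N(\al)^{b_m}\al_1$, where $b_1=1$ and $b_m=(d-1)b_{m-1}+(-1)^{m-1}$, until the first $m$ at which every conjugate lands outside the unit circle; the orbit size is then exactly $m+2$. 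Choosing $n=|l|^{b_c-1}$ forces this stopping index to be $c$, so one gets any orbit size $\geq 4$ directly, with the small cases handled by explicit examples and Dubickas's earlier result for $(d,l)=(3,\pm 2)$. The point is that the parameter $n$ controls the orbit length \emph{from the start}, so no inversion of $M$ is ever needed.
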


The proof of Theorem \ref{thm:1} will be given in \S \ref{sec:non-units} below. To study the possible behaviour of algebraic units under itaration of $M$ is more delicate. It is clear that $\# \cO_M(\al)\leq 2$ for all algebraic units of degree at most $3$, and this result is (non-trivially) also true if the degree is $4$:

\begin{thm}\label{thm:degree4}
Let $\alpha$ be an algebraic unit of degree $4$. Then either $\# \cO_M(\alpha)\leq 2$ or $\# \cO_M(\alpha)=\infty$. Moreover, if $\# \cO_M(\alpha)=\infty$, then $M^{(3)}(\alpha)=M(\alpha)^2$.
\end{thm}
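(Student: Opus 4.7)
The plan is to perform a structural case analysis on the absolute-value profile of $\alpha$'s conjugates $\alpha_1,\dots,\alpha_4$ and on the Galois group $G$ of its minimal polynomial; in the one non-trivial configuration, the identity $M^{(3)}(\alpha)=M(\alpha)^2$ will emerge from the unit relation $\prod_i \alpha_i=\pm 1$.

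\textbf{Reduction.} Since the four conjugates multiply to $\pm 1$ and those on the unit circle come in complex-conjugate pairs, the possible absolute-value profiles of $\alpha$ are: all four on the unit circle (a root of unity, by Kronecker); one (or three) conjugates outside the unit circle and the rest inside (Pisot or reciprocal Pisot); one real root $>1$, its reciprocal, and a complex conjugate pair on the unit circle (Salem of degree 4); or two conjugates with $|\alpha_i|>1$, two with $|\alpha_i|<1$, and none on the unit circle. In the first three situations $M(\alpha)$ is either $1$ or itself a Galois conjugate of $\alpha$, hence is a fixed point of $M$ and $\#\mathcal{O}_M(\alpha)\leq 2$.

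\textbf{Galois orbit of $\beta:=M(\alpha)$.} In the remaining case, label so that $|\alpha_1|,|\alpha_2|>1>|\alpha_3|,|\alpha_4|$ and set $\beta=M(\alpha)=|\alpha_1\alpha_2|$. The Galois conjugates of $\pm\beta$ are products $\alpha_i\alpha_j$ with $\{i,j\}$ in the orbit of $\{1,2\}$ under $G\leq S_4$. Running through the five transitive subgroups $V,\mathbb{Z}/4,D_4,A_4,S_4$: the pair-orbit has size $2$ for $G=V$ or for $G\in\{\mathbb{Z}/4,D_4\}$ with $\{\alpha_1,\alpha_2\}$ a ``diagonal'', in which case $\beta$ is a quadratic unit with conjugate $\pm 1/\beta$, hence a Pisot number and fixed by $M$. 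The two remaining possibilities are orbit of size $4$ ($G\in\{\mathbb{Z}/4,D_4\}$, $\{\alpha_1,\alpha_2\}$ an ``edge'') and orbit of size $6$ ($G\in\{A_4,S_4\}$).

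\textbf{The main computation.} For the size-$4$ case, the conjugates of $\pm\beta$ are the cyclic edges $\alpha_i\alpha_{i+1}$, partitioned into the reciprocal pairs $\{\alpha_1\alpha_2,\alpha_3\alpha_4\}$ and $\{\alpha_2\alpha_3,\alpha_1\alpha_4\}$. If the second pair lies on the unit circle then $\beta$ is Salem and fixed; otherwise WLOG $|\alpha_2\alpha_3|>1$, and the relation $\alpha_1\alpha_3=\pm(\alpha_2\alpha_4)^{-1}$ simplifies
\[
M(\beta) = \bigl|\alpha_1\alpha_2 \cdot \alpha_2\alpha_3\bigr| = \bigl|\alpha_2/\alpha_4\bigr|.
\]
The Galois orbit of $\pm\alpha_2/\alpha_4$ is $\{\alpha_i/\alpha_{i+2}\}$, whose two conjugates of absolute value $>1$ are $\alpha_1/\alpha_3$ and $\alpha_2/\alpha_4$ with product $(\alpha_1\alpha_2)/(\alpha_3\alpha_4)=\pm\beta^2$, so $M^{(2)}(\beta)=\beta^2$. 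In the size-$6$ case, the six conjugates of $\pm\beta$ partition into three reciprocal pairs; excluding the Salem sub-subcase (when four conjugates land on the unit circle, which a short Galois-theoretic check shows is the only way $\beta$ can have unit-modulus conjugates when $G\in\{A_4,S_4\}$), there are exactly three conjugates of $\pm\beta$ with absolute value $>1$, and enumerating the four possible configurations of ``which three'' shows their product collapses via $\prod_i\alpha_i=\pm 1$ to $\pm\alpha_k^{\pm 2}$ for some $k$. Since $G\in\{A_4,S_4\}$ forbids the minimal polynomial from being even in $x^2$ (else $|G|\leq 8$), the squares $\alpha_1^2,\dots,\alpha_4^2$ are distinct, so $M^{(2)}(\beta)=M(\pm\alpha_k^{\pm 2})=M(\alpha^2)=M(\alpha)^2=\beta^2$. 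Either way, $M^{(3)}(\alpha)=\beta^2=M(\alpha)^2$.

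\textbf{Infinite orbit and main obstacle.} Finally, $\beta^2$ contains $\alpha_1^2,\alpha_2^2$ among its conjugates of absolute value $>1$, so $\beta^2$ is not Pisot, Salem, or a rational integer --- hence not a fixed point of $M$. A short induction using $M(\gamma^2)=M(\gamma)^2$ (valid since squaring does not collapse conjugates in our non-degenerate setup) then shows that $M^{(2k+1)}(\alpha)=\beta^{2^k}$ and $M^{(2k+2)}(\alpha)=M(\beta)^{2^k}$ are all distinct and never fixed, so $\#\mathcal{O}_M(\alpha)=\infty$. The main obstacle I foresee is the size-$6$ analysis: enumerating the four configurations of ``which three reciprocal-pair elements exceed $1$'' and showing each collapses to $\pm\alpha_k^{\pm 2}$, while also verifying that any unit-circle conjugates of $\beta$ in the size-$4$ and size-$6$ subcases necessarily produce a Salem fixed point rather than some novel intermediate behavior.
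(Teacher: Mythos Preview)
Your proposal is correct and follows essentially the same route as the paper: reduce to the two-outside/two-inside configuration, analyze the Galois orbit of $\beta=M(\alpha)$, show in every non-trivial case that $M^{(3)}(\alpha)=M(\alpha)^2$, and bootstrap to an infinite orbit via $M(\gamma^2)=M(\gamma)^2$. The only organizational difference is that you split cases by the size of the pair-orbit of $\{1,2\}$ under $G\le S_4$ (sizes $2$, $4$, $6$), whereas the paper first isolates the degenerate situations $|\alpha_1\alpha_4|=1$, $|\alpha_1\alpha_3|=1$, and $\alpha_i^n=\alpha_j^n$ in preliminary lemmas and then enumerates the five possible values of $M^{(2)}(\alpha)$ directly; the resulting computations coincide.
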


The first algebraic unit $\alpha$ with $\# \cO_M(\al)\geq 3$ we found has degree $6$ and orbit size $5$. It is given by any root of $x^6 - x^5 - 4x^4 - 2x^2 - 4x - 1$. Despite an extensive search, we did not find any unit of degree $5$ of orbit size $\geq 3$, nor a unit of degree $6$ of finite orbit size $\geq 6$.

It will follow from the proof of Theorem \ref{thm:degree4} that we have the following corollary:

\begin{cor}\label{cor:to-thm-degree4}
Let $\alpha$ be an algebraic unit of degree $4$, then the sequence $(\log M^{(n)}(\alpha))_{n\in\mathbb{N}}$ satisfies a linear homogeneous recursion.
\end{cor}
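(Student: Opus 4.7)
The plan is to invoke Theorem~\ref{thm:degree4} and treat the three possible orbit structures separately. Write $a_n := \log M^{(n)}(\alpha)$ for $n \geq 1$. If $\# \cO_M(\alpha) \leq 2$, then $(a_n)_{n\geq 1}$ is eventually constant, so it trivially satisfies the first-order recursion $a_{n+1}-a_n=0$. The substantive case is $\# \cO_M(\alpha)=\infty$, where Theorem~\ref{thm:degree4} supplies the base relation $M^{(3)}(\alpha)=M(\alpha)^2$, i.e., $a_3 = 2a_1$. My goal will be to upgrade this to the second-order recursion $a_{n+2}=2a_n$ for every $n \geq 1$.

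The main computational tool is the identity
\[
 \log M(\beta^k) \;=\; k\cdot\frac{\deg \beta^k}{\deg \beta}\cdot \log M(\beta),
\]
valid for every $\beta \in \overline{\bQ}^{\times}$ and positive integer $k$, which follows at once from $\log M(\beta)=[\bQ(\beta):\bQ]\cdot h(\beta)$ together with $h(\beta^k)=k\,h(\beta)$. Specializing to $k=2$ and noting that $[\bQ(\beta):\bQ(\beta^2)]\in\{1,2\}$, this yields the dichotomy $M(\beta^2)\in\{M(\beta),\,M(\beta)^2\}$.

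With this in hand, I would prove by induction on $n$ that $M^{(n+2)}(\alpha)=M^{(n)}(\alpha)^2$ for all $n \geq 1$. The base case $n=1$ is Theorem~\ref{thm:degree4}. For the inductive step, assume $M^{(n+1)}(\alpha)=M^{(n-1)}(\alpha)^2$; applying $M$ to both sides and using the dichotomy with $\beta=M^{(n-1)}(\alpha)$ forces
\[
 M^{(n+2)}(\alpha) \;\in\; \bigl\{M^{(n)}(\alpha),\; M^{(n)}(\alpha)^2\bigr\}.
\]
The first alternative would produce a $2$-cycle $M^{(n)}(\alpha) \leftrightarrow M^{(n+1)}(\alpha)$ whose two entries are distinct (because $\# \cO_M(\alpha)=\infty$), contradicting the fact recalled in the introduction that $M$ admits no non-trivial cycles. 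So the second alternative must hold, and the induction closes, giving $a_{n+2}-2a_n = 0$ for all $n \geq 1$.

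The only delicate point is precisely the degree-drop dichotomy for $M(\beta^2)$: a priori either outcome is possible, and a single occurrence of the ``drop'' outcome along the orbit would spoil the doubling. The no-cycles observation, however, converts this potential obstacle into an immediate contradiction, so the infinite-orbit hypothesis enters the argument only at that one point.
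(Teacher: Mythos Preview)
Your argument is correct, and the inductive step is handled differently from the paper. The paper simply writes $M(M(\alpha)^2)=M(M(\alpha))^2$ and proceeds by induction; that equality is not a general identity but relies on the degree-preservation facts established inside the proof of Theorem~\ref{thm:degree4} (specifically Lemma~\ref{lem:properties2}\,(d),(e) and the analogous statements \eqref{eq:Modd},\eqref{eq:Meven} in Lemma~\ref{lem:14=1}), which guarantee that squaring never drops the degree along the orbit. You instead invoke the height identity to obtain the dichotomy $M(\beta^2)\in\{M(\beta),M(\beta)^2\}$ and then eliminate the degenerate branch by observing it would create a genuine $2$-cycle, contradicting the no-nontrivial-cycles fact from the introduction. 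The upshot is that your proof uses only the \emph{statement} of Theorem~\ref{thm:degree4} together with general properties of $M$, whereas the paper's proof leans on the internals of that theorem's proof; your route is therefore more self-contained, at the cost of importing the Weil-height formalism. One cosmetic point: in the case $\#\cO_M(\alpha)\le 2$ the sequence $(a_n)_{n\ge 1}$ is in fact constant from the start (not merely eventually constant), so the recursion $a_{n+1}=a_n$ holds for all $n\ge 1$ without any shift.
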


The proofs of Theorem \ref{thm:degree4} and Corollary \ref{cor:to-thm-degree4} are given in \S \ref{sec:degree4}. We note that, in the example of a degree 4 wandering point given by Zhang \cite{ZhangThesis}, the sequence $(\log M^{(n)}(\alpha))_{n\in\mathbb{N}}$ satisfied the recursion relation $x_n = 2x_{n-2}$ for $n \geq 3$. Based on the above corollary and further experimental data, we make the following conjecture:

\begin{conj}
For every algebraic unit $\alpha$, there exists a constant $k$ such that the sequence $(\log(M^{(n)}(\alpha)))_{n\geq k}$ satisfies a linear homogeneous recursion.
\end{conj}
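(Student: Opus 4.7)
The plan is to recast the iteration of $M$ as a piecewise linear dynamical system on the logarithmic unit group of a fixed Galois number field, and to argue that the trajectory eventually enters a \emph{linear} regime from which the recurrence emerges. The case $\#\cO_M(\al)<\infty$ is trivial (the sequence is eventually constant and satisfies $x_n=x_{n-1}$), so assume $\al$ wanders.

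For the setup, let $L$ denote the Galois closure of $\bQ(\al)$ and set $G=\Gal(L/\bQ)$. The formula $M(\beta)=\pm\prod_{\abs{\beta_i}>1}\beta_i$ exhibits $M(\beta)$ as a signed product of Galois conjugates of $\beta$, so $M$ preserves $L$; since products of units are units, the entire orbit lies in $\cO_L^\times$. Embed via $\lambda\colon\cO_L^\times\to\bR^G$, $\beta\mapsto(\log\abs{\sigma\beta})_{\sigma\in G}$, whose image is a lattice of rank $\abs{G}-1$ in the zero-sum hyperplane. For each $S\subseteq G$ define $T_S\in\bZ^{G\times G}$ by $(T_S)_{\tau,\sigma}=1$ iff $\tau^{-1}\sigma\in S$; this is a sum of $\abs{S}$ permutation matrices (right translations by the elements of $S$). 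A direct computation then shows $\lambda(M(\beta))=T_{S(\beta)}\lambda(\beta)$, where $S(\beta)=\{\sigma\in G:\abs{\sigma\beta}>1\}$. Setting $v_n=\lambda(M^n(\al))$ and $S_n=S(M^n(\al))$, the iteration becomes the piecewise linear system $v_{n+1}=T_{S_n}v_n$, and the quantity of interest is $\log M^n(\al)=v_n(\id)$.

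The crux would be to show that the signature sequence $(S_n)\subseteq 2^G$ is eventually periodic, with some period $p$ and stabilisation index $k$. Granted this, $B:=T_{S_{k+p-1}}\cdots T_{S_k}$ is a fixed matrix and $v_{n+p}=Bv_n$ for all $n\geq k$. Writing the minimal polynomial of $B$ as $q(x)=x^d-c_{d-1}x^{d-1}-\cdots-c_0$, Cayley--Hamilton yields $q(B)v_n=0$, and taking the identity coordinate of this vector identity gives
\[
\log M^{n+pd}(\al)=c_{d-1}\log M^{n+p(d-1)}(\al)+\cdots+c_0\log M^n(\al)\qquad(n\geq k),
\]
a linear homogeneous recursion (of order $pd$, with coefficients supported on multiples of $p$) satisfied by $(\log M^n(\al))_{n\geq k}$.

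The main obstacle is therefore the eventual periodicity of $(S_n)$. Since each $T_S$ is a non-negative integer matrix and $\norm{v_n}\to\infty$ along an infinite orbit, a natural strategy is a Perron--Frobenius argument applied to the normalised trajectory $v_n/\norm{v_n}$: one hopes it converges to a dominant eigenvector of some periodic composition of the $T_S$, which in turn confines the signatures to a cycle. The delicate cases to rule out are trajectories that repeatedly touch the chamber boundaries $\{x_\sigma=0\}$, and periodic compositions whose spectra include eigenvalues on the unit circle, potentially allowing the orbit to drift between chambers indefinitely. Theorem~\ref{thm:degree4}, which collapses to the single relation $M^{(3)}(\al)=M(\al)^2$, is the smallest non-trivial instance and provides the combinatorial model; the combinatorial rigidity imposed by the permutation-matrix structure of each $T_S$ should in principle suffice to rule out pathological drift in the general case.
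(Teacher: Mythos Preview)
The statement you are attempting to prove is a \emph{Conjecture} in the paper; the authors do not prove it, and explicitly present it as open, supported only by Corollary~\ref{cor:to-thm-degree4} and experimental evidence. There is therefore no proof in the paper to compare against, and any purported proof must be held to the standard of actually settling an open problem.

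Your proposal does not do this. The reduction you outline --- recasting iteration of $M$ as a piecewise linear system $v_{n+1}=T_{S_n}v_n$ on the log-unit lattice of the Galois closure, and deducing a linear recurrence from eventual periodicity of the signature sequence $(S_n)$ --- is a reasonable reformulation, and the Cayley--Hamilton step is correct once periodicity is granted. But you explicitly identify eventual periodicity of $(S_n)$ as ``the main obstacle'' and then offer only a heuristic (convergence of $v_n/\norm{v_n}$ to a Perron eigenvector), while acknowledging that boundary-touching trajectories and unit-modulus eigenvalues are ``delicate cases to rule out.'' Nothing in the proposal rules them out; the phrase ``should in principle suffice'' is not an argument. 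Eventual periodicity of the signatures is essentially equivalent in difficulty to the conjecture itself, so the reduction, while natural, does not constitute progress toward a proof.

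There is also a technical slip in the setup: the identity $\lambda(M(\beta))=T_{S(\beta)}\lambda(\beta)$ is only correct when $\beta$ generates $L$. In general, if $m=[L:\bQ(\beta)]$, each distinct conjugate of $\beta$ is hit by $m$ elements of $G$, and one gets $T_{S(\beta)}\lambda(\beta)=m\,\lambda(M(\beta))$. Since the degree of $M^{(n)}(\al)$ can drop along the orbit (this already happens in the degree~$4$ analysis and in the proof of Theorem~\ref{thm:deg-12}), the factor $m_n$ varies with $n$, and your linear system acquires $n$-dependent scalar factors. This is repairable --- one can fold $m_n$ into the periodicity hypothesis --- but it should be stated correctly.
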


We note that, in the case of a large Galois group, the behavior of units is particularly simple. We prove that, if the Galois group contains the alternating group, then the orbit of a unit must either stop after at most one iteration, or the unit wanders. Specifically, we prove in \S \ref{sec:Sd-or-Ad} the following theorem:

\begin{thm}\label{thm:Sd-or-Ad}
If $\alpha$ is an algebraic unit of degree $d$ such that the Galois group of the Galois closure of $\mathbb{Q}(\alpha)$ over $\mathbb{Q}$ contains the alternating group $A_d$, then $\# \cO_M(\alpha)\in\{1,2,\infty\}$.

More precisely, if $\al$ is as above, of degree $\geq 5$, and such that none of $\pm \al^{\pm 1}$ is conjugate to a Pisot number, then $\#\cO_M(\al)=\infty$. 
\end{thm}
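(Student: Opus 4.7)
The plan is to stratify by $s := |\{i : |\alpha_i| > 1\}|$ and set $h_i := \log|\alpha_i|$, so $\sum_i h_i = 0$ because $\alpha$ is a unit. First I dispose of the edge values of $s$. The case $s = d$ is impossible; $s = 0$ forces $\alpha$ to be a root of unity by Kronecker, but roots of unity have abelian Galois group and hence cannot satisfy $A_d \subseteq G$ for $d \geq 5$; and $s \in \{1, d-1\}$ makes $\alpha$ (respectively $\alpha^{-1}$) conjugate to a real number whose unique conjugate strictly outside the closed unit disk is itself, which must be a Pisot number, since Salem numbers have Galois group contained in a hyperoctahedral wreath product of order $2^{d/2}(d/2)!$ and so cannot contain $A_d$ for $d \geq 5$. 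Up to replacing $\alpha$ by one of $\pm\alpha^{\pm 1}$, these Pisot cases are precisely what the ``more precisely'' hypothesis excludes. We may therefore assume $2 \leq s \leq d-2$.

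In this main range, the fact that $G \supseteq A_d$ is $(d-2)$-transitive means $G$ acts transitively on the $s$-subsets of $\{1, \ldots, d\}$, so each $s$-subset $T$ produces a Galois conjugate $\pm\prod_{i \in T} \alpha_i$ of $M(\alpha)$ with absolute value $\exp(L_T)$, where $L_T := \sum_{i \in T} h_i$. Choose $i_0 \in S$ minimizing $h_{i_0}$ and $j_0 \notin S$ maximizing $h_{j_0}$; averaging inside $S$ and inside $\{1,\ldots,d\} \setminus S$ (using $\sum_{j \notin S} h_j = -L_S$) gives $h_{i_0} \leq L_S/s$ and $-h_{j_0} \leq L_S/(d-s)$, where $L_S = \log M(\alpha)$. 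The swapped $s$-subset $T := (S \setminus \{i_0\}) \cup \{j_0\}$ is distinct from $S$, and
\[
L_T \;=\; L_S - h_{i_0} + h_{j_0} \;\geq\; L_S\!\left(1 - \frac{d}{s(d-s)}\right),
\]
which is strictly positive: under $d \geq 5$ and $2 \leq s \leq d-2$ we have $s(d-s) \geq 2(d-2) > d$. Also $h_{i_0} > 0 \geq h_{j_0}$ strictly, so $L_T < L_S$ and the resulting conjugate is distinct from $M(\alpha)$ itself. Hence $M(\alpha)$ has at least two distinct Galois conjugates strictly outside the unit disk, ruling out $M(\alpha)$ being a Pisot or Salem number; as $M(\alpha) > 1$ is a unit, it is also not a natural number. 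Therefore $M(\alpha)$ is not fixed by $M$, giving $\#\cO_M(\alpha) \geq 3$.

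To promote $\#\cO_M(\alpha) \geq 3$ to $\#\cO_M(\alpha) = \infty$, and to obtain the first claim for general $\alpha$, I would iterate the swap inequality. Each Galois conjugate of any iterate $M^n(\alpha)$ is a signed monomial $\pm\prod_i \alpha_i^{e_i^{(n)}}$ in a $G$-invariant family of non-negative exponent vectors permuted coordinate-wise; the same rearrangement-style argument, applied by swapping a ``heavy'' and a ``light'' coordinate of the exponent vector representing $M^n(\alpha)$, produces a second vector in the orbit whose log-absolute value is strictly positive and strictly less than $\log M^n(\alpha)$. This precludes $M^n(\alpha)$ from being Pisot or Salem at any step, so no iterate is fixed and the orbit is infinite; combined with Theorem~\ref{thm:degree4} for $d = 4$ and the trivial cases $d \leq 3$, this also yields the trichotomy $\#\cO_M(\alpha) \in \{1, 2, \infty\}$. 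The main obstacle is the iteration bookkeeping: the Galois group induced on the conjugates of $M^n(\alpha)$ need not contain $A_{d_n}$ for the new degree $d_n$, so the argument must be run inside the Galois closure $K$ of $\mathbb{Q}(\alpha)$ using only the $A_d$-action on exponent vectors in $\mathbb{Z}^d$, with care to rule out accidental multiplicative relations among $\alpha_1, \ldots, \alpha_d$ that could collapse distinct exponent vectors to the same value of $\prod_i \alpha_i^{e_i}$.
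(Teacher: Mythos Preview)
Your reduction to $2 \le s \le d-2$ and the single-swap argument at $n=0$ are correct and clean: the averaging bound $L_T \ge L_S\bigl(1 - d/(s(d-s))\bigr) > 0$ works because the exponent vector of $M(\alpha)$ is the $0/1$ indicator of $S$, and $L_T < L_S$ gives distinctness for free. This shows $M(\alpha)$ is neither Pisot nor Salem, hence $\#\cO_M(\alpha) \ge 3$, by a route slightly different from the paper's.

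The genuine gap is the iteration, exactly where you flag it. For $M^{(n)}(\alpha) = \prod_i \alpha_i^{e_i}$ with a general exponent vector, your claim that a heavy/light swap yields a log-value that is \emph{strictly positive} is the whole theorem and does not follow from the rearrangement reasoning you used at $n=0$; that averaging bound relied essentially on $e_i \in \{0,1\}$. The paper supplies the missing work in three pieces: (i) a multiplicative-independence lemma showing that a $3$-cycle $(i\,j\,k)$ or double transposition $(i\,j)(k\,l)$ fixes $\prod \alpha_i^{f_i}$ only when the relevant exponents agree---this is your ``accidental relations'' worry, and its proof genuinely uses that $\alpha_1/\alpha_d$ and $\alpha_1\alpha_2/(\alpha_{d-1}\alpha_d)$ lie off the unit circle; (ii) a structural lemma, exploiting that $M^{(n)}(\alpha)$ is a Perron number, forcing the exponent vector to be either weakly decreasing or to have pairwise distinct entries; and (iii) two separate constructions. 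In the weakly-decreasing case the paper exhibits one explicit even permutation near the last nonzero exponent and proves its image lies outside the unit circle via an auxiliary monotonicity lemma; in the pairwise-distinct case no single swap is used---one instead multiplies over \emph{all} $3$-cycles and shows the product exceeds $M^{(n)}(\alpha)$. So your outline points the right way, but ``strictly positive'' in your last paragraph is asserted, not proved, and the paper's case split reflects that the two regimes need genuinely different mechanisms.
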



One might be led by Theorems \ref{thm:degree4} and \ref{thm:Sd-or-Ad} to suspect that, in fact, algebraic units cannot have arbitrarily large but finite orbits under $M$. However, we prove that this is not the case. 

\begin{thm}\label{thm:deg-12}
Let $S\in \mathbb{N}$ be arbitrary, and let $d\geq 12$ be divisible by $4$. Then there exist algebraic units of degree $ d $ whose orbit size is finite but greater than $ S $.
\end{thm}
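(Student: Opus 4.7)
The plan is to leverage Theorem \ref{thm:1} (which supplies non-units of every degree $\geq 3$ with arbitrary finite orbit size) via a two-step Chebyshev-type tower construction. Given $S\in\NN$ and $d=4k\geq 12$ with $k\geq 3$, first apply Theorem \ref{thm:1} with degree $k$ and a non-unit norm (say $l=2$) to obtain an algebraic integer $\alpha$ of degree $k$ with $\#\cO_M(\alpha)\geq S$.

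Next, construct a unit $\beta$ of degree $d$ as follows. Let $\gamma$ be a root of $x^2-\alpha x+1$ over $\bQ(\alpha)$, so that $\gamma+\gamma^{-1}=\alpha$, and let $\beta$ be a root of $x^2-\gamma x+1$ over $\bQ(\gamma)$, so that $\beta+\beta^{-1}=\gamma$. Since both defining quadratics are monic with constant term $1$, both $\gamma$ and $\beta$ are algebraic units. Generically $[\bQ(\gamma):\bQ]=2k$ and $[\bQ(\beta):\bQ]=4k=d$ (the degeneracies being precisely the exceptional cases $\alpha^2-4\in\bQ(\alpha)^2$ and $\gamma^2-4\in\bQ(\gamma)^2$). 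This construction exactly targets $d\geq 12$ with $4\mid d$.

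The essential step is to relate the $M$-iterates of $\beta$ to those of $\alpha$. The $4k$ conjugates of $\beta$ are indexed by a conjugate $\alpha_i$ of $\alpha$ together with two sign choices $s,t\in\{\pm\}$, and they pair up into $2k$ reciprocal pairs $(\beta_{i,s,+},\beta_{i,s,-})$ with $\beta_{i,s,+}\beta_{i,s,-}=1$. Each pair contributes $\max(|\beta_{i,s,+}|,|\beta_{i,s,-}|)$ to $M(\beta)$. The plan is to show, by a case analysis on the moduli of the conjugates $\alpha_i$ of $\alpha$ (real or complex; inside or outside $|z|=2$), that some bounded iterate $M^{(j)}(\beta)$ lies in $\cO_M(\alpha)$. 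Once this is established, the two orbits merge and one obtains $S\leq\#\cO_M(\alpha)\leq\#\cO_M(\beta)<\infty$, the finiteness being inherited from $\alpha$.

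The main obstacle is this final orbit-tracking. Computing $M(\beta)$ cleanly requires sorting the $4k$ conjugates of $\beta$ by modulus, and the configuration depends delicately on where the conjugates $\alpha_i$ of $\alpha$ sit in the complex plane. To make the analysis go through, one likely has to restrict to a subfamily within Theorem \ref{thm:1} enjoying additional genericity properties: conjugates in sufficiently general position, no accidental unit-modulus coincidences of $\gamma_{i,\pm}$ or $\beta_{i,s,\pm}$, and irreducibility of the tower's defining quadratics at both levels. Identifying explicitly the iterate $M^{(j)}(\beta)$ at which the orbit of $\beta$ joins the orbit of $\alpha$---and hence showing that $\#\cO_M(\beta)$ is finite but inherits a large lower bound from $\#\cO_M(\alpha)$---is the technical heart of the argument.
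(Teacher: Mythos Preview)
Your approach has a fatal obstruction: the Mahler measure of an algebraic unit is always again an algebraic unit. Indeed, if $\beta$ is a unit with conjugates $\beta_1,\ldots,\beta_d$ and Galois group $G$, then $M(\beta)=\pm\prod_{i\in I}\beta_i$ for some subset $I$, and the norm of $M(\beta)$ is $\pm\prod_{\sigma\in G}\prod_{i\in\sigma(I)}\beta_i=\big(\prod_i\beta_i\big)^c=\pm 1$, where $c$ is the common multiplicity with which each index appears (transitivity of $G$). Consequently every iterate $M^{(j)}(\beta)$ of your unit $\beta$ is a unit. On the other hand, the $\alpha$ you obtain from Theorem~\ref{thm:1} is a non-unit with $|N(\alpha)|\geq 2$, and the same computation shows $M^{(j)}(\alpha)$ is never a unit (in the constructions of \S\ref{sec:non-units} the orbit terminates in a rational integer $\geq 2$). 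Hence $\cO_M(\beta)\cap\cO_M(\alpha)=\varnothing$, and the proposed mechanism---``some bounded iterate $M^{(j)}(\beta)$ lies in $\cO_M(\alpha)$''---cannot occur. There is no way to patch this by choosing $\alpha$ to be a unit, since Theorem~\ref{thm:1} explicitly requires $l\notin\{0,\pm 1\}$.

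The paper's argument is entirely different and avoids Theorem~\ref{thm:1} altogether. One takes a Salem number $\alpha_1$ of degree $2k$ and a real quadratic unit $\beta_1>1$ in a linearly disjoint field, and works with the degree-$4k$ unit $\alpha_1\beta_1$. Because the non-extremal conjugates of a Salem number lie on the unit circle, one can compute $M(\alpha_1^a\beta_1^b)$ exactly: it equals $\alpha_1^{2a}\beta_1^{(2k-2)b}$ when $\alpha_1^a>\beta_1^b$, and $\beta_1^{2kb}$ when $\alpha_1^a<\beta_1^b$. Since $2k-2>2$ for $k\geq 3$, the $\beta$-exponent eventually dominates and the orbit terminates in a quadratic unit; but by starting with $\alpha_1^\ell\beta_1$ for $\ell$ large one postpones this crossover past step $S$. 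The restriction $d\geq 12$ comes precisely from needing $2k-2>2$.
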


The proof is given in Section \ref{sec:deg-12}. It would be interesting to know whether there are large finite orbits of algebraic units in any degree less than $12$.

\section{Arbitrary orbit size for non-units and proof of Theorem \ref{thm:1}}\label{sec:non-units}

In \cite{DubickasNrsMM}, Dubickas proved the case $d=3$ and $l=2$ (and $k$ arbitrary). In order to prove Theorem \ref{thm:1}, we will start with a few examples.

\begin{example}
Since there are Pisot-Vijayaraghavan numbers of any degree and norm, we know that for any $d\in\NN$ and any $l\in\ZZ\setminus\{\pm1,0\}$ there are algebraic numbers $\alpha$ of degree $d$, norm $l$ and orbit size $1$. By Perron's criterion, we may take the largest root of $x^d + l^2 x^{d-1} +l$.

Similarly, the polynomial $x^d + l^{d}x +l$ has precisely one root $\beta$ inside the unit circle and all other roots are of absolute value $>\vert l\vert$. Hence, the polynomial is irreducible. Let $\alpha$ be the largest root of this polynomial. Then $M(\alpha)=\vert \frac{l}{\beta}\vert$, which is a Pisot number. Thus, $\alpha$ has norm $l$, degree $d$ and orbit size $2$.
\end{example}

\begin{example}
For any $l\in\ZZ\setminus\{\pm1,0\}$ we consider $f(x)=x^3 -l^2 x +l$. Let $\alpha_1,\alpha_2,\alpha_3$ be the roots of $f$ ordered such that $\vert \alpha_1 \vert \geq \vert \alpha_2 \vert \geq \vert \alpha_3 \vert$.

If $l \geq 2$ we have 
\[
\begin{array}{lcl}
f (-l-1) = -2l^2 -2l -1 < 0 & \quad & f(-l)=l >0 \\
f(l-1)=-2l^2 +4l -1 <0 & \quad & f(l)=l>0 \\
f(1)=1-l^2 +l < 0 & \quad & f(\frac{1}{l})=\frac{1}{l^3} >0
\end{array}
\]
Hence, the three roots are real and none of them is an integer. If $f$ is reducible, then one of the factors must be linear, this is a contradiction since $f$ is monic. Hence, $f$ is irreducible and it follows $\alpha_1 \in (-l-1,-l)$, $\alpha_2 \in (l-1,l)$ and $\alpha_3 \in (\frac{1}{l},1)$. Therefore we find $M^{(0)}(\alpha_1)=\alpha_1$, $M^{(1)}(\alpha_1)=-\alpha_1\alpha_2 =\frac{l}{\alpha_3}$, $M^{(2)}(\alpha_1)=M(\frac{l}{\alpha_3})=\frac{l^2}{\alpha_2\alpha_3}=-\alpha_1 l$, $M^{(3)}(\alpha_1)=M(-\alpha_1 l)=\alpha_1 l \alpha_2 l \alpha_3 l =l^4 \in\ZZ$. These are all elements in the orbit of $\alpha_1$ under iteration of $M$. Hence, $\alpha_1$ is an algebraic integer of degree $3$, $N(\alpha_1)=l$ and $\# \cO_M(\alpha_1)=4$. Moreover $-\alpha_1$ is an algebraic integer of degree $3$, $N(-\alpha_1)=-l$ and $\# \cO_M(-\alpha_1)=4$.

In the same fashion one can prove that any root of the polynomial $x^3 +lx^2 -l$ is of degree $3$, norm $-l$ and orbit size $3$.
\end{example}

\begin{example}
Again let $l\in\ZZ\setminus\{\pm1,0\}$ be arbitrary and consider $f(x)=x^4 -l^2 x^2 + (l^2 -l)x +l$. The four roots of $f$ are ordered as $\vert \alpha_1\vert \geq \vert \alpha_2 \vert \geq \vert \alpha_3\vert \geq \vert \alpha_4\vert$. A direct computation shows that $f$ is irreducible and $\# \cO_M(\alpha_1)=4$ if $l\in\{-3,-2,-4\}$. If $l \not\in\{-3,-2-1,0,1,2\}$, then we show as in the last example that
\[
\alpha_1 \in (-l-1,-l), \quad \alpha_2\in (l-1,l), \quad \alpha_3 \in (1,2), \quad \alpha_4 \in (-1,-\frac{1}{l^2})
\]
if $l>0$, and
\[
\alpha_1 \in (-l-1,-l), \quad \alpha_2\in (l-1,l), \quad \alpha_3 \in (1,2), \quad \alpha_4 \in (1,\frac{1}{l^2})
\]
if $l < 0$. Obviously $f$ has no linear factor. Moreover, $\alpha_4$ and $\alpha_1$ must be Galois conjugates, since the norm of $\alpha_1$ has to be a divisor of $l$. Hence, if $f$ is not irreducible it factors into $g(x)=(x-\alpha_1)(x-\alpha_4)$ and $h(x)=(x-\alpha_2)(x-\alpha_3)$. This can only occur if $g$ and $h$ are in $\ZZ[x]$. Comparing the size of the roots, the only possibilities are $g(x)=x^2 + (l+1)x +1$ and $h(x)=x^2 -(l+1)x +l$. However, multiplying these two polynomials does not give $f$. Hence, $f$ is irreducible.

Now we calculate the orbit size of $\alpha_1$. We have $M(\alpha_1)=-\frac{l}{\alpha_4}$, $M^{(2)}(\alpha_1)=\pm l^2 \alpha_1$, $M^{(3)}(\alpha_1)=\pm l^9$, and hence $\# \cO_M(\alpha_1)=4$. We have shown, that any root $\alpha$ of $f$ is an algebraic integer of degree $4$, norm $l$ and orbit size $4$.
\end{example}

\begin{example}
One can show with similar methods as above, that any root of $x^d - l^{d-2} x +l$ has orbit size $3$, for all $d\geq 4$ and $l\in\ZZ\setminus\{\pm1,0\}$: To this end, we note
\begin{equation}\label{eq:onerootinside}
\vert -l^{d-2} z \vert = \vert l \vert^{d-2} > \vert l\vert +1 \geq \vert z^d + l\vert \quad \forall ~ z\in\mathbb{C}, ~\vert z \vert =1,
\end{equation}
and 
\begin{equation}\label{eq:allrotslessl}
\vert z^d \vert = \vert l \vert^{d} > \vert l\vert^{d-1} + \vert l\vert \geq \vert -l^{d-2} z + l\vert \quad \forall ~ z\in\mathbb{C}, ~\vert z \vert =\vert l\vert.
\end{equation}
Now we apply Rouch\'e's theorem. Then \eqref{eq:onerootinside} tells us that $x^d - l^{d-2} x +l$ has precisely one root $\alpha_d$ inside the unit circle, and \eqref{eq:allrotslessl} tells us that all roots $\alpha_1,\ldots,\alpha_d$ of $x^d - l^{d-2} x +l$ have absolute value $<\vert l \vert$.

Before we proceed with calculating the orbit size of one of these roots, we need to show that $x^d - l^{d-2} x +l$ is irreducible. This is obviously the case if $\vert l\vert$ is a prime number. So in particular, we can assume that $\vert l \vert \geq 4$. Using this assumption and $d\geq 4$, the same calculation as in \eqref{eq:onerootinside} proves that there is precisely one root of $x^d - l^{d-2} x +l$ of absolute value $\leq \sqrt{\vert l \vert}$ (necessarily $\alpha_d$). 

It follows that no product of two or more of the elements $\alpha_1,\ldots,\alpha_{d-1}$ can be a divisor of $l$. Hence, the only possibility for $x^d - l^{d-2} x +l$ to be reducible is, if it has a root $a\in\mathbb{Z}$. This $a$ must be a divisor of $\vert l\vert$ and it must satisfy $a^d = l^{d-2}a-l$. Hence, $a^{d-1} \mid l$ which implies $\vert a \vert^{d-1} \leq \vert l\vert$. This is not possible, as we have just seen that $\vert a\vert \geq \sqrt{\vert l\vert}$. It follows that $x^d - l^{d-2} x +l$ is indeed irreducible, and $\alpha_1$ is an algebraic integer of degree $d$, and norm $l$.

We then have:
\begin{itemize}
\item $M^{(1)}(\alpha_1) = \alpha_1\cdots \alpha_{d-1} =\frac{l}{\vert \alpha_d \vert} \notin \mathbb{Z}$,
\item $M^{(2)}(\alpha_1)=M(\pm \frac{l}{\alpha_d})=\pm \prod_{i=1}^d \frac{l}{\alpha_i} \in \mathbb{Z}$, and
\item $M^{(n)}(\alpha_1) = M^{(2)}(\alpha_1)$ for all $n\geq 2$.
\end{itemize}
Hence $\alpha_1$ has orbit size $3$.
\end{example}

\begin{prop}\label{prop:assump}
Let $d \geq 3$ be an integer and let $\alpha_1,\ldots,\alpha_d$ be a full set of Galois conjugates of an algebraic integer $\alpha$. Assume the following conditions:
\begin{enumerate}[(i)]
\item $\vert \alpha_1 \vert > \vert \alpha_2 \vert \geq \ldots \geq \vert \alpha_{d-1} \vert > 1 > \vert \alpha_d \vert$,
\item $\vert \alpha_i \vert \leq \vert N(\alpha) \vert$ for all $i\in\{2,\ldots,d\}$, 
\end{enumerate}
Then $\alpha$ is a pre-periodic point of $M$. More precisely, if we let
\begin{multline*}
c(\alpha)= \min\{\min\{k\in\mathbb{N}: 2\mid k \text{ and } \vert \alpha_d \cdot N(\alpha)^{b_{k}} \vert > 1\},\\
\min\{k\in\mathbb{N}: 2\nmid k \text{ and } \vert \alpha_1 \vert < \vert N(\alpha)^{b_k} \vert \}\},
\end{multline*}
where we define $b_1=1$, and $b_n = b_{n-1}\cdot (d-1)+(-1)^{n-1}$ for all $n\geq 2$, then $\# \cO_M(\alpha)=c(\alpha)+2$.
\end{prop}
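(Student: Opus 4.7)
The approach is to track how $M$ acts on the Galois orbit of $\alpha$ at each iteration. Write $L = |N(\alpha)|$ and $a_i = |\alpha_i|$. Observe that (i) and (ii) together force $L \geq 2$, for otherwise (ii) combined with $L = 1$ would give $a_{d-1} \leq 1$, contradicting (i). The base case is immediate from (i): exactly $\alpha_1, \ldots, \alpha_{d-1}$ lie outside the unit disk, so $M(\alpha) = a_1 \cdots a_{d-1} = L/a_d$, and as an algebraic number $M(\alpha) = \pm N(\alpha)/\alpha_d$, whose full Galois orbit is $\{\pm N(\alpha)/\alpha_i : i = 1, \ldots, d\}$ with absolute values $L/a_i$. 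This establishes the $n=1$ case with $b_1 = 1$.

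The plan is to prove by induction on $n$ with $1 \leq n \leq c(\alpha)$ the following structural statement: for odd $n$, $M^{(n)}(\alpha) = \epsilon_n N(\alpha)^{b_n}/\alpha_d$ with $d$-element Galois orbit $\{\epsilon_n N(\alpha)^{b_n}/\alpha_i\}$ having absolute values $L^{b_n}/a_i$; for even $n \geq 2$, $M^{(n)}(\alpha) = \epsilon_n N(\alpha)^{b_n}\alpha_1$ with $d$-element Galois orbit $\{\epsilon_n N(\alpha)^{b_n}\alpha_i\}$ having absolute values $L^{b_n} a_i$. Here $\epsilon_n \in \{\pm 1\}$ is chosen so that $M^{(n)}(\alpha) > 0$; the cardinality $d$ of the Galois orbit follows from the distinctness of the $\alpha_i$ together with the transitivity of the Galois action.

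For the induction step, I compute $M^{(n+1)}(\alpha)$ directly as the product of those conjugate absolute values of $M^{(n)}(\alpha)$ that exceed $1$. The crucial consequence of (ii) is that for the $d - 2$ \emph{middle} indices $i \in \{2, \ldots, d-1\}$, both $L^{b_n}/a_i \geq L^{b_n - 1} \geq 1$ and $L^{b_n} a_i > L^{b_n} \geq 1$ hold, so these $d - 2$ conjugates always lie (non-strictly) outside the unit disk and contribute their absolute value to the Mahler-measure product. The question therefore reduces to a single \emph{extreme} conjugate: at index $1$ for odd $n$ (absolute value $L^{b_n}/a_1$) and at index $d$ for even $n$ (absolute value $L^{b_n} a_d$). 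If this extreme conjugate is strictly inside the unit disk, exactly one conjugate is dropped from the product, and the identity $a_1 \cdots a_d = L$ rearranges what remains into the claimed form with $b_{n+1} = (d-1)b_n + (-1)^n$, which is precisely the stated recursion. If instead the extreme conjugate lies outside the unit disk --- exactly the condition defining $c(\alpha)$ in the proposition --- then $M^{(n+1)}(\alpha)$ is the product of all $d$ conjugate absolute values, and using $a_1 \cdots a_d = L$ this collapses to a pure power of $L$, so $M^{(n+1)}(\alpha)$ is a positive rational integer and therefore a fixed point of $M$.

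Putting this together, $c(\alpha)$ is the largest index at which $M^{(n)}(\alpha)$ has the inductive non-rational form, $M^{(c(\alpha)+1)}(\alpha) \in \mathbb{Z}$ is the fixed rational integer at which the orbit terminates, and therefore $\# \cO_M(\alpha) = c(\alpha) + 2$. Pairwise distinctness of the orbit members follows because $b_n$ is strictly increasing for $d \geq 3$, which forces the Mahler measures $M^{(n)}(\alpha)$ to be strictly increasing with $n$. The main technical subtlety is ensuring throughout that the algebraic integer $N(\alpha)^{b_n}/\alpha_d$ (or $N(\alpha)^{b_n}\alpha_1$) really has degree exactly $d$ with the predicted Galois orbit, rather than degenerating; but this is immediate from separability of $\alpha$ and transitivity of the Galois action on the $\alpha_i$.
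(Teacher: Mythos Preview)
Your argument follows essentially the same route as the paper's: both establish the identical inductive formula $M^{(k)}(\alpha) = \pm N(\alpha)^{b_k}/\alpha_d$ for odd $k$ and $\pm N(\alpha)^{b_k}\alpha_1$ for even $k$ (valid for $k \leq c(\alpha)$), use assumption (ii) to pin down the middle $d-2$ conjugates, and observe that at step $c(\alpha)+1$ every conjugate lies outside the unit circle so the iterate becomes a natural number and is fixed.

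One correction is needed: your assertion that $b_n$ is strictly increasing for $d \geq 3$ is false when $d=3$, since then $b_1=b_2=1$. You lean on this claim both for the finiteness of $c(\alpha)$ and for the pairwise distinctness of the orbit elements, so as written both points have a gap in the cubic case. The paper handles finiteness by proving directly that $b_n \geq (d-2)(d-1)^{n-2}+1$ for $n\geq 3$, whence $b_n\to\infty$; and it handles distinctness by noting that the inductive computation itself already gives $M^{(k+1)}(\alpha) > M^{(k)}(\alpha)$ for $0\leq k < c(\alpha)$, since each step expresses $M^{(k+1)}(\alpha)$ as $M^{(k)}(\alpha)$ times $d-2$ further conjugates of modulus at least $1$, not all equal to $1$. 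Neither of these points requires monotonicity of the sequence $b_n$.
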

\begin{proof}
First we note, that $\alpha$ cannot be an algebraic unit. Hence, $\vert N(\alpha)\vert \geq 2$ and $b_k\geq 1$ for all $k$. We claim that $b_k \rightarrow \infty$. To see this, notice that $b_1=1, b_2=d-2\geq 1$, and we want to show that for $n\geq 3$, $b_n\geq (d-2)(d-1)^{n-2}+1$. Now, this is true for $n=3$, since $b_3=(d-2)(d-1)+1$. By induction, suppose $b_{n-1}\geq(d-2)(d-1)^{n-3}+1$, then $b_n\geq((d-2)(d-1)^{n-3}+1)(d-1)+(-1)^{n-1}=(d-2)(d-1)^{n-2}+(d-1)+(-1)^{n-1}\geq(d-2)(d-1)^{n-2}+1$, as desired. Therefore, $b_n\geq 1$ for all $n$, and $b_n \rightarrow \infty$.

So the integer $c:=c(\alpha)$ does indeed exist. We claim that for all $k \leq c$ we have 
\begin{equation}\label{eq:mahler1}
M^{(k)}(\alpha)=\begin{cases} \pm \frac{N(\alpha)^{b_k}}{\alpha_d} & \text{ if } 2 \nmid k \\ \pm N(\alpha)^{b_k} \cdot \alpha_1 & \text{ if } 2\mid k \end{cases}
\end{equation}
Note that $\alpha_1,\alpha_d \in \mathbb{R}$, since there is no other conjugate of the same absolute value. Therefore, the sign in \eqref{eq:mahler1} has to be chosen such that the value is positive. We prove the claim by induction.

For $k=1$, we calculate $M^{(1)}(\alpha)=M(\alpha) = \pm \alpha_1\cdot\ldots\cdot \alpha_{d-1}= \pm \frac{N(\alpha)}{\alpha_d}=\pm \frac{N(\alpha)^{b_1}}{\alpha_d}$, by assumption (i). Now assume, that \eqref{eq:mahler1} is correct for a fixed $k < c$. If $k$ is even, then by assumption (i) we have 
\begin{align*}
M^{(k+1)}(\alpha) & = M(\pm N(\alpha)^{b_k} \cdot \alpha_1) = \pm N(\alpha)^{b_k\cdot (d-1)}\cdot\alpha_1 \cdot \ldots\cdot \alpha_{d-1}\\  &= \pm \frac{N(\alpha)^{b_k \cdot (d-1) +1}}{\alpha_d}= \pm \frac{N(\alpha)^{b_{k+1}}}{\alpha_d}.
\end{align*}
Here we have used that $k<c$ and hence $\vert  N(\alpha)^{b_k} \cdot \alpha_{d} \vert <1$. 

If $k$ is odd, then by assumption (ii) we have
\begin{align*}
M^{(k+1)}(\alpha) & = M(\pm \frac{N(\alpha)^{b_k}}{\alpha_d}) = \pm \frac{N(\alpha)^{b_k}}{\alpha_d}\cdot \frac{N(\alpha)^{b_k}}{\alpha_{d-1}} \cdot \ldots \cdot \frac{N(\alpha)^{b_k}}{\alpha_2}\\ &= \pm \frac{N(\alpha)^{b_k\cdot (d-1)}}{\alpha_2\cdot\ldots\cdot\alpha_{d-1}} = \pm N(\alpha)^{b_k\cdot (d-1)-1}\cdot\alpha_1 = \pm N(\alpha)^{b_{k+1}}\cdot\alpha_1.
\end{align*}
Here we have used that $k<c$ and hence $\vert \frac{N(\alpha)^{b_k}}{\alpha_1} \vert < 1$.
This proves the claim. Moreover, the proof of the claim shows that $M^{(k+1)}(\alpha) > M^{(k)}(\alpha_1)$ for all $k \in \{0,\ldots,c-1\}$. 

Now, we calculate $M^{(c+1)}(\alpha)$. By definition of $c$, every conjugate of $M^{c}(\alpha)$ is greater than $1$ in absolute value. Therefore, $M^{c+1}(\alpha)\in\mathbb{N}$. It follows, that $M^{(c+2)}(\alpha)=M^{(c+1)}(\alpha)$. Hence, $\# \cO_M(\alpha_1)=c+2$ as claimed.
\end{proof}

It remains to prove the existence of an algebraic number of degree $d$ satisfying the assumptions of Proposition \ref{prop:assump} for an arbitrary $c$.

The strategy is as the following: We will prove the locations of the roots of a class of irreducible polynomials satisfying assumptions (i) and (ii) from Proposition \ref{prop:assump}, then by Proposition \ref{prop:assump}, show that any root of one of the polynomials in the class will have desired degree, norm and orbit size.

We fix for the rest of this section arbitrary integers $d\geq 3$, $c\geq 2$ and $l\in\ZZ\setminus\{\pm 1,0\}$. Moreover, we define
\[
f_n(x)=x\cdot (x^{d-2}-2)\cdot (x-n) + l
\]
and denote the roots of $f_n$ by $\alpha_1^{(n)},\ldots,\alpha_d^{(n)}$ ordered such that
\[
\vert \alpha_1^{(n)} \vert \geq \vert \alpha_2^{(n)} \vert \geq \ldots \geq \vert \alpha_d^{(n)}  \vert.
\]

\begin{lemma}\label{lem:rootslocation}
Let $n \geq \vert l \vert +3$ be an integer. With the notation from above we have $ \alpha_1^{(n)} \in (n-\frac{1}{n},n+ \frac{1}{n})$, $\alpha_d^{(n)} \in (-\frac{\vert l \vert}{n},-\frac{1}{2n})\cup(\frac{1}{2n},\frac{\vert l \vert}{n})$, and $\vert \alpha_i^{(n)} \vert \in (1, \sqrt[d-2]{3-\frac{1}{d}})$ for all $i\in \{2,\ldots,d-1\}$. Moreover, $\alpha_d^{(n)}$ is negative if and only if $\alpha_1^{(n)} <n$.
\end{lemma}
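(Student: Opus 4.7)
The plan is to treat $f_n$ as a perturbation of $g(z)=z(z^{d-2}-2)(z-n)$, whose roots are $0$, the $(d-2)$-th roots of $2$, and $n$, and to locate the roots of $f_n$ in three groups: $d-2$ conjugates in a thin annulus around the circle $|z|=\sqrt[d-2]{2}$, one real root near $n$, and one real root near $0$. The bulk count uses Rouch\'e's theorem; the two extremal real roots are then pinned down by the intermediate value theorem.

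Set $R:=\sqrt[d-2]{3-1/d}$ and apply Rouch\'e on the circles $|z|=1$ and $|z|=R$. On $|z|=1$ one has $|z^{d-2}-2|\geq 1$ and $|z-n|\geq n-1$, so $|g(z)|\geq n-1\geq|l|+2>|l|$. On $|z|=R$ the bounds $|z|=R\geq 1$, $|z^{d-2}-2|\geq R^{d-2}-2=1-1/d$, and $|z-n|\geq n-R$ give $|g(z)|\geq R(1-1/d)(n-R)$. A short verification of $R(1-1/d)>1$ (equivalent to $3-1/d>(d/(d-1))^{d-2}$, immediate for small $d$ and following for large $d$ from $R\sim 1+(\log 3)/(d-2)$), combined with $R<3$ and $n\geq|l|+3$, yields $R(1-1/d)(n-R)>|l|$. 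Since $g$ has exactly one root in $|z|<1$ and exactly $d-1$ roots in $|z|<R$ (because $\sqrt[d-2]{2}<R<n$), Rouch\'e transfers the same counts to $f_n$; in particular $d-2$ roots of $f_n$ lie in $1<|z|<R$, which gives the asserted bound for $\alpha_2^{(n)},\dots,\alpha_{d-1}^{(n)}$. Because non-real roots of a real polynomial occur in conjugate pairs, the unique roots of $f_n$ in $|z|<1$ and in $|z|>R$ must both be real.

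For the large real root $\alpha_1^{(n)}$, direct expansion gives $f_n(n+1/n)\geq n^{d-2}-2+l>0$ and $f_n(n-1/n)=l-(1-1/n^2)((n-1/n)^{d-2}-2)<0$, with $f_n(n)=l$ in between; the intermediate value theorem places $\alpha_1^{(n)}\in(n-1/n,n+1/n)$, and the sign of $f_n(n)$ decides on which side of $n$ it lies. For the small real root $\alpha_d^{(n)}$, its defining equation $\alpha_d(\alpha_d^{d-2}-2)(\alpha_d-n)=-l$ together with $|\alpha_d|<1$ yields
\[
|\alpha_d|\,(2-|\alpha_d|^{d-2})(n-|\alpha_d|)\ \leq\ |l|\ \leq\ |\alpha_d|\,(2+|\alpha_d|^{d-2})(n+|\alpha_d|).
\]
The right inequality gives $|\alpha_d|\geq|l|/(3(n+1))>1/(2n)$ (using $n\geq|l|+3$ and $|l|\geq 2$); the left inequality first produces $|\alpha_d|\leq|l|/(n-1)\leq 1/2$, and bootstrapping with $2-|\alpha_d|^{d-2}\geq 3/2$ sharpens this to $|\alpha_d|\leq|l|/n$. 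Finally, the expansion $f_n(\pm 1/(2n))=\pm 1+l+O(1/n^2)$, together with $f_n(0)=l$, shows that $\alpha_d^{(n)}$ is negative exactly when $l>0$; but $l>0$ is also precisely the condition under which $f_n(n)=l>0$ forces $\alpha_1^{(n)}<n$, establishing the final ``if and only if'' clause. I expect the main technical hurdles to be the Rouch\'e estimate on $|z|=R$ (which hinges on the sharp inequality $R(1-1/d)>1$) and the bootstrap step that upgrades the a priori bound $|\alpha_d^{(n)}|\leq|l|/(n-1)$ to the required $|\alpha_d^{(n)}|\leq|l|/n$.
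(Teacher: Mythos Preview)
Your overall strategy---Rouch\'e on the circles $|z|=1$ and $|z|=R=\sqrt[d-2]{3-1/d}$ to isolate the $d-2$ middle roots, followed by a separate treatment of the two real extreme roots---is exactly what the paper does, and your intermediate-value argument for $\alpha_1^{(n)}$ is a clean substitute for the paper's additional Rouch\'e on $|z|=n\pm\frac{1}{n}$.

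There is, however, a genuine gap in your bootstrap for the upper bound $|\alpha_d^{(n)}|<|l|/n$. You assert that the left inequality gives $|\alpha_d|\leq |l|/(n-1)\leq 1/2$, but the second inequality is false in general: the hypothesis is only $n\geq |l|+3$, so $|l|/(n-1)$ can be as large as $|l|/(|l|+2)$, which exceeds $1/2$ as soon as $|l|\geq 3$ (e.g.\ $|l|=3,\ n=6$ gives $3/5$; for large $|l|$ with $n=|l|+3$ the ratio is close to~$1$). Without $|\alpha_d|^{d-2}\leq 1/2$ you cannot conclude $2-|\alpha_d|^{d-2}\geq 3/2$, and one pass of the bootstrap stalls at $|l|/(n-1)$, short of the required $|l|/n$. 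The paper sidesteps this by applying Rouch\'e once more on the circle $|z|=|l|/n$: one checks
\[
\frac{|l|}{n}\cdot\Bigl(2-\bigl(\tfrac{|l|}{n}\bigr)^{d-2}\Bigr)\cdot\Bigl(n-\tfrac{|l|}{n}\Bigr)\ >\ |l|,
\]
which after expansion reduces to $n^2>|l|(n+2)$ and follows from $n\geq|l|+3$; this places exactly one root of $f_n$ in $|z|<|l|/n$ directly. Either adopt this Rouch\'e step, or repair the bootstrap with a genuinely iterative estimate---the margin at $n=|l|+3$ is thin, so a single crude pass cannot close it.

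A smaller point: your final ``if and only if'' clause is morally right but the stated evidence ($f_n(\pm 1/(2n))\approx \pm 1+l$ and $f_n(0)=l$) only excludes $(-\tfrac{1}{2n},\tfrac{1}{2n})$; it does not by itself select the sign of $\alpha_d^{(n)}$. The quickest fix is to observe that $g(x)=x(x^{d-2}-2)(x-n)$ is positive on $(0,1)$ and negative on $(-1,0)$, so $f_n=g+l$ has no zero in $(0,1)$ when $l>0$ and none in $(-1,0)$ when $l<0$; this is essentially the paper's one-line justification.
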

\begin{proof}
We apply Rouch\'e's theorem and first prove the location of $\alpha_1^{(n)}$. Let $z$ be any complex number with $\vert z \vert = n+\frac{1}{n}$. Then 
\begin{align*}
 & \vert z\cdot (z^{d-2} -2 ) \cdot (z-n)\vert \\ & \geq \left | n+\frac{1}{n}\right | \cdot \left\vert (n+\frac{1}{n})^{d-2}-2\right\vert \cdot \frac{1}{n} \\ & = \left\vert 1+\frac{1}{n^2}\right \vert \cdot \left\vert (n+\frac{1}{n})^{d-2}-2\right\vert \\ & > \vert l \vert
\end{align*}
Hence by Rouch\'e's theorem, $f_{n}$ has exactly as many roots of absolute value $< n+\frac{1}{n}$ as $x\cdot (x^{d-2}-2)\cdot (x-n)$, so $f_{n}$ has $d$ roots of absolute value $< n+\frac{1}{n}$.
Now, let $z$ be any complex number with $\vert z \vert = n - \frac{1}{n}$, suppose that $n=\vert l \vert + m$ where $m\geq 3$. Then 
\begin{align*}
& \left\vert z\cdot (z^{d-2} -2 ) \cdot (z-n)\right \vert \\ & \geq \left\vert n-\frac{1}{n}\right\vert \cdot \left\vert (n-\frac{1}{n})^{d-2}-2\right\vert \cdot \frac{1}{n} \\ & \geq \left\vert n-\frac{1}{n}\right\vert \cdot \left\vert (n-\frac{1}{n})-2\right\vert \cdot \frac{1}{n} \\ & = (\vert l \vert +m- \frac{1}{\vert l \vert +m})(\vert l \vert +m- \frac{1}{\vert l \vert +m}-2)\cdot \frac{1}{\vert l \vert +m} \\ & = (1-\frac{1}{(\vert l \vert +m)^2})(\vert l \vert - \frac{1}{\vert l \vert +m} +m-2) \\ & =\vert l \vert -\frac{\vert l \vert}{(\vert l \vert +m)^2}-\frac{1}{\vert l \vert +m}+(m-2)+\frac{1}{(\vert l \vert +m)^3}-\frac{m}{(\vert l \vert +m)^2}+\frac{2}{(\vert l \vert +m)^2}> \vert l \vert,
\end{align*}
since $m\geq 3$. Again by Rouch\'e's theorem, $f_n$ has $d-1$ roots of absolute value $<n-\frac{1}{n}$. Since $f_n$ has no roots on the circle $\vert z \vert = n- \frac{1}{n}$, $f_n$ has a single root in $(-n-\frac{1}{n}, -n+\frac{1}{n})\cup (n-\frac{1}{n}, n+\frac{1}{n})$. Now, 
 \begin{align*}
 &\left\vert (-n-\frac{1}{n})((-n-\frac{1}{n})^{d-2}-2)(-2n-\frac{1}{n}) \right\vert \\ & \geq(\vert l \vert+2)\left\vert(n+\frac{1}{n})^{d-2}-2\right\vert (2n+\frac{1}{n})\\ & \geq (\vert l \vert+2)\vert \vert l \vert (2(\vert l \vert +2)) \\ & \geq (\vert l \vert+2)\vert \vert l \vert (2\vert l \vert +4) \\ & \geq \vert l \vert^{2}>\vert l \vert.
\end{align*}
Similarly, \[ \left\vert (-n+\frac{1}{n})((-n+\frac{1}{n})^{d-2}-2)(-2n+\frac{1}{n}) \right\vert \geq 2\vert l \vert^2 > \vert l \vert.\]
Since \[(-n-\frac{1}{n})((-n-\frac{1}{n})^{d-2}-2)(-2n-\frac{1}{n})\] has the same sign as \[(-n+\frac{1}{n})((-n+\frac{1}{n})^{d-2}-2)(-2n+\frac{1}{n}),\] $f_{n}(-n+\frac{1}{n})$ has the same sign as $f_{n}(-n-\frac{1}{n})$. Therefore, since there is only one root in the annulus $\vert z \vert \in  (n-\frac{1}{n},n+ \frac{1}{n})$, which is necessarily real, $f_{n}$ cannot have any root in the interval $(-n-\frac{1}{n}, -n+\frac{1}{n})$, thus $f_n$ has a single root in the interval $(n-\frac{1}{n}, n+\frac{1}{n})$.

To prove the location of $\alpha_d^{(n)}$, let $z$ be any complex number with $\vert z \vert =\frac{\vert l \vert}{n}$. Then, 
\begin{align*}
 & \vert z\cdot (z^{d-2} -2 ) \cdot (z-n)\vert  \geq  \frac{\vert l \vert}{n} \cdot (2-\frac{\vert l \vert}{n})\cdot (n-\frac{\vert l \vert}{n}) \\  = & 2\vert l \vert - 2 \frac{\vert l \vert^2}{n^2} - \frac{\vert l \vert^2}{n}+\frac{\vert l \vert^3}{n^3} > 2\vert l \vert - 2 \frac{\vert l \vert^2}{n^2} - \frac{\vert l \vert^2}{n} \\ \geq & 2\vert l\vert - \vert l \vert \frac{\vert l \vert^2+4\vert l\vert}{(\vert l \vert +2)^2} > \vert l \vert.
\end{align*}
By Rouch\'e's theorem, $f_n$ has exactly as many roots of absolute value $<\frac{\vert l \vert}{n}$ as the polynomial $x\cdot(x^{d-2}-2)\cdot(x-n)$. This is, $f_n$ has exactly one root of absolute value $<\frac{\vert l \vert}{n}$. This root is necessarily real. A straightforward computation shows that $f_n(\pm\frac{1}{2n})$ have the same sign as $f_n(0)$. Hence $f_n$ cannot have any root in the interval $(-\frac{1}{2n},\frac{1}{2n})$.

To show the location of $\alpha_i^{(n)}$ for all $i\in \{2,\ldots,d-1\}$, let $z$ be any complex number with $\vert z \vert =1$. Then,
\begin{align*}
 & \vert z\cdot (z^{d-2} -2 ) \cdot (z-n)\vert  \\ & = \vert z^{d-2} -2 \vert \cdot \vert z-n\vert \\ & \geq n-1> \vert l \vert,
\end{align*}
so $f_n$ has a single root of absolute value $<1$. The argument above also shows that $f_n$ has no roots on the circle $\vert z \vert =1$.
Now, let $z$ be any complex number with $\vert z \vert =\sqrt[d-2]{3-\frac{1}{d}}$. Then, 
\begin{align*}
 & \vert z\cdot (z^{d-2} -2 ) \cdot (z-n)\vert  \\ & \geq (3-\frac{1}{d})^{\frac{1}{d-2}}\cdot(1-\frac{1}{d})\cdot(n-(3-\frac{1}{d})^{\frac{1}{d-2}}).
\end{align*}
Notice that since $n\geq\vert l \vert +3$, $n-(3-\frac{1}{d})^{\frac{1}{d-2}} > \vert l \vert$, hence it suffices to show that $(3-\frac{1}{d})^{\frac{1}{d-2}}\cdot(1-\frac{1}{d})>1$. Indeed, by elementary calculus, $(3-\frac{1}{d})(1-\frac{1}{d})^{d-2}>1$ for all $d \geq 3$, which gives $\vert z\cdot (z^{d-2} -2 ) \cdot (z-n)\vert> \vert l \vert$, hence by Rouch\'e's theorem, $f_n$ has $d-1$ roots of absolute value less than $\sqrt[d-2]{3-\frac{1}{d}}$. Therefore, $f_n$ has exactly $d-2$ roots with absolute values in the interval $(1, \sqrt[d-2]{3-\frac{1}{d}})$.

The last part of the lemma is obvious, since $x \cdot(x^{d-2}-2)\cdot(x-n)$ changes the sign at $0$ and at $n$ in the same way.
\end{proof}

\begin{lemma}\label{lem:irredodd}
Let $n\geq \vert l \vert +3$. Then $f_n$ is irreducible in $\mathbb{Q}[x]$ whenever $l$ is odd.
\end{lemma}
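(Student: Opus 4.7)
The plan is to suppose $f_n$ factors nontrivially and derive a contradiction from the precise root locations of Lemma \ref{lem:rootslocation}, using crucially that $l$ is odd. Since $f_n$ is monic it is primitive, so by Gauss's lemma any nontrivial factorization in $\mathbb{Q}[x]$ can be taken in the form $f_n = g h$ with $g, h \in \mathbb{Z}[x]$ monic of positive degrees. Evaluating at $0$ gives $g(0) h(0) = f_n(0) = l$, so both $g(0)$ and $h(0)$ are odd integer divisors of $l$.

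Next I would show that $\alpha_1^{(n)}$ and $\alpha_d^{(n)}$ must be roots of the same factor. For any monic polynomial, the absolute value of the constant term equals the product of the absolute values of its roots. If $\alpha_1^{(n)}$ were a root of $g$ while $\alpha_d^{(n)}$ were a root of $h$, then $\vert g(0) \vert \geq \vert \alpha_1^{(n)} \vert > n - \tfrac{1}{n}$; combined with $n \geq \vert l \vert + 3$ and $\vert l \vert \geq 2$ this gives $\vert g(0)\vert > \vert l\vert$, contradicting $g(0) \mid l$. So the two extreme roots lie in the same factor, say in $g$, and then every root of $h$ is one of the $d-2$ \emph{middle} roots of $f_n$ whose absolute value lies in $(1,\, \sqrt[d-2]{3 - 1/d})$.

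Now I would squeeze $\vert h(0)\vert$ from both sides. Since $\deg h \geq 1$ and every root of $h$ has absolute value strictly greater than $1$, the integer $\vert h(0)\vert$ (being the product of those absolute values) satisfies $\vert h(0)\vert > 1$, hence $\vert h(0)\vert \geq 2$. On the other hand, since $g$ contains both extreme roots we have $\deg h \leq d - 2$, and therefore
\[
\vert h(0)\vert \;<\; \bigl(3 - \tfrac{1}{d}\bigr)^{\deg h/(d-2)} \;\leq\; 3 - \tfrac{1}{d} \;<\; 3.
\]
Thus $\vert h(0)\vert = 2$, i.e.\ $h(0) = \pm 2$ is even, contradicting that $h(0)$ is an odd divisor of $l$. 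Hence no nontrivial factorization exists and $f_n$ is irreducible.

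The main obstacle is really just careful bookkeeping: translating Lemma \ref{lem:rootslocation} into a strict upper bound on $\vert h(0)\vert$ and verifying $n - \tfrac{1}{n} > \vert l \vert$ from the hypothesis $n \geq \vert l\vert + 3$. The parity hypothesis enters only at the end, to exclude the single borderline case $\vert h(0)\vert = 2$ that the archimedean bounds alone cannot rule out.
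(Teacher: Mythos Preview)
Your argument is correct and follows essentially the same route as the paper: force $\alpha_1^{(n)}$ and $\alpha_d^{(n)}$ into the same factor via the size bound $\vert\alpha_1^{(n)}\vert>\vert l\vert$, then bound the constant term of the complementary factor between $1$ and $3$ using the location of the middle roots, and finally invoke oddness of $l$ to exclude $\pm 2$. The paper's proof is terser but identical in substance.
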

\begin{proof}
From Lemma \ref{lem:rootslocation} we know $\alpha_1^{(n)} > \vert l \vert$. Hence, $\alpha_1^{(n)}$ must be a conjugate of the only root of $f_n$ which is less than $1$ in absolute value. If $f_n$ would be reducible, then some product of the elements $\alpha_2^{(n)},\ldots,\alpha_{d-1}^{(n)}$ must be a divisor of $l$. But every such product lies strictly between $1$ and $3$. Since $2$ is no divisor of $l$ by assumption, $f_n$ is necessarily irreducible.
\end{proof}

\begin{lemma}\label{lem:Eisenstein}
Let $p$ be a prime and let $f=x^d + a_{d-1}x^{d-1}+\ldots+a_2 x^2 + a_1 x + a_0 \in \ZZ[x]$ such that $p\mid a_i$ for all $i\in \{0,\ldots,d-1\}$ and $p^2 \nmid a_2$. Then either $f$ has a divisor of degree $\leq 2$ or $f$ is irreducible.
\end{lemma}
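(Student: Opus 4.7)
The plan is to mimic the standard proof of Eisenstein's criterion, with the hypothesis $p^{2}\nmid a_{2}$ playing the role usually played by $p^{2}\nmid a_{0}$. I would suppose for contradiction that $f=g\cdot h$ is a factorization in $\mathbb{Z}[x]$ with $\deg g\geq 3$ and $\deg h\geq 3$, and then derive $p^{2}\mid a_{2}$. Since $f$ is monic, I may choose $g$ and $h$ to be monic of degrees $e$ and $d-e$ respectively, with $3\leq e\leq d-3$.

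The first step is to reduce everything modulo $p$. The divisibility hypotheses give $\bar f\equiv x^{d}\pmod p$, so the monic identity $\bar f=\bar g\cdot\bar h$ in the UFD $\mathbb{F}_{p}[x]$ forces $\bar g=x^{e}$ and $\bar h=x^{d-e}$. Writing
\[
g=x^{e}+\sum_{i<e}b_{i}x^{i},\qquad h=x^{d-e}+\sum_{j<d-e}c_{j}x^{j},
\]
this means $p\mid b_{i}$ for every $i<e$ and $p\mid c_{j}$ for every $j<d-e$.

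The second step is to read off the coefficient of $x^{2}$ in the product $gh$: it equals $b_{0}c_{2}+b_{1}c_{1}+b_{2}c_{0}$. Because $e\geq 3$ and $d-e\geq 3$, every index appearing in this sum is strictly below $e$ on the $b$-side and strictly below $d-e$ on the $c$-side, so each $b_{i}$ and each $c_{j}$ occurring is divisible by $p$. Thus every summand is divisible by $p^{2}$, whence $p^{2}\mid a_{2}$, contradicting the hypothesis. I conclude that no factorization $f=gh$ with both factors of degree $\geq 3$ exists, so $f$ is either irreducible or admits a divisor of degree $\leq 2$.

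There is essentially no obstacle beyond the bookkeeping; the argument is a routine variation on Eisenstein's criterion. The one point worth flagging is that the mod $p$ reduction guarantees $p\mid b_{i}$ for \emph{all} $i<e$ (in particular for $i\in\{0,1,2\}$) and similarly for $c_{j}$ only when $e\geq 3$ and $d-e\geq 3$; this is precisely the regime that the conclusion rules out, which is why the restriction to factors of degree $\geq 3$ is exactly what the hypothesis $p^{2}\nmid a_{2}$ is designed to exclude.
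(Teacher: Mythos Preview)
Your proof is correct and follows essentially the same approach as the paper: assume a factorization $f=gh$ with both factors of degree $\geq 3$, reduce modulo $p$ to see that all non-leading coefficients of $g$ and $h$ are divisible by $p$, and then read off $a_{2}=b_{0}c_{2}+b_{1}c_{1}+b_{2}c_{0}\equiv 0\pmod{p^{2}}$ for the contradiction. Your write-up is slightly more careful in justifying why $\bar g=x^{e}$ and $\bar h=x^{d-e}$ (invoking the UFD property of $\mathbb{F}_{p}[x]$), but the argument is otherwise identical.
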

\begin{proof}
This follows exactly as the classical Eisenstein criterion. Assume, that $f=g\cdot h$ where
\[
g(x)=x^r + g_{r-1}x^{r-1}+\ldots+g_0 \quad \text{ and } \quad h(x)=x^s+h_{s-1}x^{s-1}+\ldots+h_0 \in \ZZ[x]
\]
with $r,s\geq 3$. Since the reduction of $g\cdot h$ modulo $p$ is equal to $x^d \in \nicefrac{\ZZ}{p\ZZ}[x]$ and $\nicefrac{\ZZ}{p\ZZ}[x]$ is an integral domain, we know that each coefficient of $g$ and $h$ is divisible by $p$. It follows $p^2 \mid g_0h_2 + g_1 h_1 + g_2 h_0 =a_2$, which is a contradiction. 
\end{proof}

\begin{lemma}\label{lem:irredeven}
Let $n \geq \vert l \vert +3$ and $\vert l \vert$ both be even. Then $f_n$ is irreducible.
\end{lemma}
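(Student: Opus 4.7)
The plan is to combine Lemma \ref{lem:Eisenstein} with the root locations of Lemma \ref{lem:rootslocation} to exclude all proper factorizations. Expanding gives $f_n(x) = x^d - nx^{d-1} - 2x^2 + 2nx + l$, and the hypothesis that both $n$ and $|l|$ are even makes every non-leading coefficient divisible by $2$, while the coefficient of $x^2$ is $-2$, not divisible by $4$. Hence Lemma \ref{lem:Eisenstein} with $p = 2$ reduces the problem to ruling out integer divisors of $f_n$ of degree $1$ or $2$.

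For a linear factor, any integer root $a$ must divide $l$, so $|a| \leq |l|$. Lemma \ref{lem:rootslocation} gives $|\alpha_1^{(n)}| > |l|$, $|\alpha_d^{(n)}| < 1$ (so only $a = 0$ would be possible there, ruled out by $f_n(0) = l \neq 0$), and the middle roots have absolute value in $(1, \sqrt[d-2]{3 - 1/d})$. For $d \geq 4$ this interval contains no integers, while for $d = 3$ it leaves only $a = \pm 2$, which are eliminated by computing $f_n(\pm 2)$ and using $n \geq |l| + 3$. For a quadratic factor $h(x) = (x - \alpha_i^{(n)})(x - \alpha_j^{(n)})$, I would do a case analysis on $\{i, j\}$. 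If the pair contains $\alpha_1^{(n)}$ and a middle root, then $|h(0)| > n - 1/n > |l|$, contradicting $h(0) \mid l$. If it contains $\alpha_d^{(n)}$ and a middle root, the bound $|h(0)| < (|l|/n) \sqrt[d-2]{3-1/d} < 2$ combined with the evenness of $h(0)$ (since $f_n \equiv x^d \pmod 2$ forces every non-leading coefficient of both $h$ and $f_n/h$ to be even) gives a contradiction. For two middle roots, $|h(0)| < (3-1/d)^{2/(d-2)} < 2$ for $d \geq 5$ (verified by checking $3 - 1/d < 2^{(d-2)/2}$), yielding the same contradiction; the borderline case $d = 4$ degenerates into the case below applied to the complementary factor.

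The main obstacle is the case $h(x) = (x - \alpha_1^{(n)})(x - \alpha_d^{(n)})$, where $|h(0)|$ can legitimately be an even integer dividing $l$ and size arguments alone fail. My plan here is a distance argument followed by an algebraic identity. The integer $\alpha_1^{(n)} + \alpha_d^{(n)}$ lies within $1/n + |l|/n \leq (|l|+1)/(|l|+3) < 1$ of $n$, so it equals $n$, giving $\alpha_d^{(n)} = n - \alpha_1^{(n)}$. Substituting $\alpha_1^{(n)} - n = -\alpha_d^{(n)}$ into $f_n(\alpha_1^{(n)}) = \alpha_1^{(n)}((\alpha_1^{(n)})^{d-2} - 2)(\alpha_1^{(n)} - n) + l = 0$ and writing $m = \alpha_1^{(n)} \alpha_d^{(n)}$ yields $(\alpha_1^{(n)})^{d-2} = 2 + l/m$. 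The identical computation at $\alpha_d^{(n)}$ gives $(\alpha_d^{(n)})^{d-2} = 2 + l/m$. Hence $(\alpha_1^{(n)})^{d-2} = (\alpha_d^{(n)})^{d-2}$, impossible for real numbers with $|\alpha_1^{(n)}| > 1 > |\alpha_d^{(n)}|$. This rules out the last case and completes the proof.
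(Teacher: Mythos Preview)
Your argument is correct, and it shares the same Eisenstein-with-$p=2$ reduction and the same ``product of two middle roots lies in $(1,2)$'' idea as the paper. The genuine divergence is in the treatment of the hypothetical quadratic factor $(x-\alpha_1^{(n)})(x-\alpha_d^{(n)})$. The paper never allows $\alpha_1^{(n)}+\alpha_d^{(n)}$ to be an integer at all: it invokes the last clause of Lemma~\ref{lem:rootslocation} (that $\alpha_d^{(n)}<0$ if and only if $\alpha_1^{(n)}<n$) to force the sum into either $(n-1,n)$ or $(n,n+1)$, and is done in one line. You do not use that sign correlation; instead you pin the sum to $n$ by distance alone and then run the algebraic identity $\alpha_1^{d-2}=2+l/m=\alpha_d^{d-2}$, which is a nice and independent contradiction but costs more work. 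A second, smaller difference: the paper first argues that $\alpha_1^{(n)}$ and $\alpha_d^{(n)}$ must lie in the same irreducible factor (because $\alpha_1^{(n)}>|l|$), so it never has to consider the mixed pairs $\{\alpha_1^{(n)},\text{middle}\}$ or $\{\alpha_d^{(n)},\text{middle}\}$ separately, whereas you handle those by direct size/parity bounds. Net effect: the paper's route is shorter because it exploits all of Lemma~\ref{lem:rootslocation}; your route is slightly longer but is self-contained in that it does not need the ``moreover'' part of that lemma.
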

\begin{proof}
We first note that $f_n$ does not have a factor of degree $1$. Otherwise, some divisor $a$ of $l$ would be a root of $f_n$. But $\vert a (a-n)\vert \geq  n -1 \geq \vert l \vert +1$. Hence, in particular, $f_n(a)\neq 0$ for all $a\mid l$. It follows, that $f_n$ is irreducible for $d=3$. From now on we assume $d\geq 4$.

If $l$ and $n$ are even, then $f_n(x)=x(x^{d-2}-2)(x-n)+l=x^d -nx^{d-1} -2 x^2 +2n x +l$ is -- by Lemma \ref{lem:Eisenstein} -- irreducible if it does not have a factor of degree $2$. 

Since  $\alpha_1^{(n)}$ is larger than $\vert l \vert$ (which is the absolute value of product of all roots of $f_n$), it must be conjugate to $\alpha_d^{(n)}$ which is the only root of absolute value $\leq 1$. If $\alpha_d^{(n)}$ would be the only conjugate of $\alpha_1^{(n)}$, then $\alpha_1^{(n)}+\alpha_d^{(n)} \in \ZZ$. This is not possible by Lemma \ref{lem:rootslocation}. This means, that there is no factor of degree $2$, having $\alpha_1^{(n)}$ or $\alpha_d^{(n)}$ as a root. This proves that $f_n$ is irreducible for $d=4$. For $d\geq 5$ the only possibility of a divisor of degree $2$ is $x^2 - (\alpha_i^{(n)}+\alpha_j^{(n)})x + \alpha_i^{(n)}\alpha_j^{(n)}$, for $i\neq j\in \{2,\ldots,d-1\}$. By Lemma \ref{lem:rootslocation}, we have $\vert\alpha_i^{(n)}\alpha_j^{(n)}\vert >1$ and $\vert \alpha_i^{(n)}\alpha_j^{(n)} \vert< \sqrt[d-2]{3-\frac{1}{d}}^2 < 2$. Hence, such polynomial is not in $\ZZ[x]$. We conclude that $f_n$ does not have a factor of degree $\leq 2$ and therefore $f_n$ is irreducible.   
\end{proof}

\begin{thm}\label{thm:st}
Let $d\geq3$ and $l\in\ZZ\setminus\{\pm1,0\}$ such that $(d,l)\notin\{(3,2),(3,-2)\}$. Moreover, let $b_1,b_2,\ldots$ be the sequence from Proposition \ref{prop:assump} and $c\geq2$ be an integer with $c\neq2$ if $d\in\{3,4\}$. Then any root $\alpha$ of $f_{\vert l \vert^{b_c-1}}(x)=x(x^{d-2}-2)(x-\vert l \vert^{b_c-1})+l$ is an algebraic integer of degree $d$, norm $l$, and orbit size $c+2$.
\end{thm}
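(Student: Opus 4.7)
The plan is to apply Proposition \ref{prop:assump} to a root $\alpha$ of $f_n$ with $n := |l|^{b_c - 1}$, so the argument reduces to (a) verifying that $f_n$ is irreducible with roots located as in Lemma \ref{lem:rootslocation}, and (b) showing that the index $c(\alpha)$ from the proposition equals $c$.

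For (a), I would first check that $n \geq |l| + 3$ throughout the admissible range of $(d,l,c)$. The recursion $b_c = b_{c-1}(d-1) + (-1)^{c-1}$ grows quickly (the proof of Proposition \ref{prop:assump} already establishes $b_c \geq (d-2)(d-1)^{c-2}+1$ for $c \geq 3$), so $|l|^{b_c - 1} \geq |l| + 3$ holds outside a short list of small cases; the exclusions $(d,l) \notin \{(3,\pm 2)\}$ and $c \neq 2$ for $d \in \{3,4\}$ are precisely what rules out the degenerate triples where $b_c - 1$ would be too small, with any remaining borderline cases handled by a direct ad hoc argument. Once $n \geq |l|+3$, Lemma \ref{lem:rootslocation} pins down the root locations, Lemmas \ref{lem:irredodd} and \ref{lem:irredeven} supply irreducibility (split on the parity of $l$), and the constant term $f_n(0) = l$ together with monicity shows that $\alpha$ is an algebraic integer of degree $d$ with $|N(\alpha)| = |l|$. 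Hypothesis (i) of Proposition \ref{prop:assump} is then immediate from the root locations, and (ii), which demands $(3 - 1/d)^{1/(d-2)} \leq |l|$, holds for $d \geq 4$ with $|l| \geq 2$ and is forced by hypothesis in the case $d = 3$ (where $|l| \geq 3$).

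For (b), the exponent $b_c - 1$ in $n = |l|^{b_c - 1}$ is engineered so that the defining condition for $c(\alpha)$ first holds at $k = c$. When $c$ is even one has $|\alpha_d| \cdot |l|^{b_c} > (1/(2n))|l|^{b_c} = |l|/2 \geq 1$; when $c$ is odd, $|\alpha_1| < n + 1/n \leq |l|^{b_c}$, using $|l|^{b_c} - |l|^{b_c - 1} = |l|^{b_c-1}(|l| - 1) \geq |l|^{b_c-1} > 1/n$. Conversely, for every $k < c$ of matching parity the inequality fails: for even $k$, $|\alpha_d| \cdot |l|^{b_k} < (|l|/n)|l|^{b_k} = |l|^{b_k + 2 - b_c} \leq 1$ provided $b_k \leq b_c - 2$; and for odd $k$, $|\alpha_1| > n - 1/n \geq |l|^{b_k}$ provided $b_k \leq b_c - 1$. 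These monotonicity statements follow by induction from the recursion, because a single step roughly multiplies $b_j$ by $d - 1 \geq 2$. Proposition \ref{prop:assump} then yields $\#\cO_M(\alpha) = c(\alpha) + 2 = c + 2$.

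The main obstacle is the arithmetic bookkeeping in step (b) --- tracking the inequalities among $|\alpha_1|$, $|\alpha_d|$, and the powers $|l|^{b_k}$ and establishing the needed comparisons between $b_k$ and $b_c$ with the correct parity. The small-case checks in step (a) (for instance $d=5$, $l=\pm 2$, $c=2$, where $n = 4$ sits right at the boundary of the lemmas' range) are tedious but routine.
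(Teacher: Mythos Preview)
Your approach mirrors the paper's: invoke Lemmas \ref{lem:rootslocation}, \ref{lem:irredodd}, \ref{lem:irredeven} to place the roots and establish irreducibility, verify hypotheses (i)--(ii) of Proposition \ref{prop:assump}, and then compute $c(\alpha)=c$ from the explicit bounds on $|\alpha_1|$ and $|\alpha_d|$. The paper is terser, checking the defining inequalities only at $k\in\{c-2,c-1,c\}$ and leaving the monotonicity of $(b_k)$ implicit; you are more explicit about parity and correctly flag the borderline case $(d,l,c)=(5,\pm2,2)$ (where $n=4<|l|+3$), which the paper's proof glosses over. One small slip: in your odd-$k$ failure estimate you write ``provided $b_k\le b_c-1$'', but since $n-1/n<n=|l|^{b_c-1}$ you actually need $b_k\le b_c-2$; this still holds throughout the admissible range because $b_c-b_{c-1}=b_{c-1}(d-2)+(-1)^{c-1}\ge 2$ once the excluded pairs $(d,c)$ are removed.
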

\begin{proof}
The cases we have to exclude, are those which violate assumption (ii) in Proposition \ref{prop:assump} or satisfy $\vert l^{b_c-1} \vert<\vert l\vert +3$.

In Lemmas \ref{lem:irredodd} and \ref{lem:irredeven}, we proved that $\alpha$ has degree $d$. Moreover, by Lemma \ref{lem:rootslocation}, $\alpha$ satisfies assumptions (i) and (ii) from Proposition \ref{prop:assump}. As usual we denote with $\alpha_1,\ldots,\alpha_d$ the full set of conjugates of $\alpha$. Then by Lemma \ref{lem:rootslocation}, we achieve
$\vert \alpha_d l^{b_c} \vert > \frac{\vert l \vert}{2} \geq 1$ and $\vert \alpha_1 \vert < \vert l^{b_{c}-1} \vert +1  \leq \vert l^{b_c}\vert$. 

Furthermore, we know $\vert \alpha_1\vert > \vert l \vert^{b_c-1}-1 \geq \vert l \vert^{b_{c-1}}$ and $\vert \alpha_d l^{b_{c-1}}\vert < \frac{\vert l \vert^{b_{c-1}+1}}{\vert l \vert^{b_c-1}}<1$. Again from Lemma \ref{lem:rootslocation} we also have $\vert \alpha_d l^{b_{c-2}}\vert < 1$ and $\vert\alpha_1\vert> l^{b_{c-2}}$, if $c \geq 3$. 

What we have shown is that in the notation from Proposition \ref{prop:assump}, we have $c(\alpha)=c$, and hence $\# \cO_M(\alpha)=c+2$.
\end{proof}

\begin{rmk}
A closed formula for the recursion $b_1,b_2,\ldots$ is $b_n=\frac{1}{d}((d-1)^n+(-1)^{n-1})$. So Theorem \ref{thm:st} is fairly effective.
\end{rmk}

\begin{cor}
For any triple $(d,l,k)$ of integers, with $d\geq3$, $l\notin\{\pm1,0\}$, and $1\leq k$, there are algebraic integers $\alpha$ with $[\mathbb{Q}(\alpha):\mathbb{Q}]=d$, $N(\alpha)=l$ and $\# \cO_M(\alpha)=k$.
\end{cor}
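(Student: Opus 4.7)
The plan is to assemble the desired examples by a straightforward case analysis on the orbit size $k$, combining the explicit families constructed in the examples at the beginning of Section \ref{sec:non-units}, Theorem \ref{thm:st}, and Dubickas's prior result \cite{DubickasNrsMM} to cover the cases that Theorem \ref{thm:st} explicitly excludes. The only real ``work'' is bookkeeping: verifying that every admissible triple $(d,l,k)$ falls into one of the already-treated cases.

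For $k=1$ and $k=2$, the two polynomials $x^d+l^2x^{d-1}+l$ and $x^d+l^dx+l$ from Example 1 give algebraic integers of arbitrary degree $d\geq3$ and arbitrary norm $l\in\ZZ\setminus\{\pm1,0\}$ with orbit size $1$ and $2$ respectively. For $k=3$, when $d\geq 4$, Example 4 (the family $x^d-l^{d-2}x+l$) provides an algebraic integer of degree $d$, norm $l$ and orbit size $3$; when $d=3$, I would use the family $x^3+lx^2-l$ from the end of Example 2, which yields orbit size $3$ and norm $-l$, and then replace $l$ by $-l$ to obtain every admissible norm.

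For $k=4$, Example 2 covers $d=3$ (any $l$), Example 3 covers $d=4$ (any $l$), and Theorem \ref{thm:st} applied with $c=2$ covers $d\geq 5$ (any $l$); note that the restriction $c\neq 2$ in Theorem \ref{thm:st} only applies when $d\in\{3,4\}$, so this is consistent. For $k\geq 5$, Theorem \ref{thm:st} applied with $c=k-2\geq 3$ produces an algebraic integer of degree $d$, norm $l$ and orbit size $k$ in every case except the excluded pair $(d,l)\in\{(3,2),(3,-2)\}$. These last two cases (for arbitrary $k$) are precisely the cubic algebraic integers of norm $\pm 2$ with prescribed stopping time constructed by Dubickas in \cite{DubickasNrsMM}.

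Since every triple $(d,l,k)$ with $d\geq 3$, $l\in\ZZ\setminus\{\pm1,0\}$, and $k\geq 1$ is covered by one of the constructions above, the corollary follows. The main obstacle is purely organizational, namely tracking which of the several small exclusions in Theorem \ref{thm:st} is absorbed by which earlier example; once this is laid out, no further computation is needed.
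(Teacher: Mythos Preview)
Your proposal is correct and follows essentially the same approach as the paper's proof: combine the explicit examples from the beginning of Section~\ref{sec:non-units}, Theorem~\ref{thm:st}, and Dubickas's result for $(3,\pm 2,k)$. The paper compresses this into two sentences, whereas you spell out the bookkeeping case by case; one small point the paper makes explicit and you leave implicit is that Dubickas only treats $N(\alpha)=2$, with the case $N(\alpha)=-2$ obtained by passing to $-\alpha$.
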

\begin{proof}
For $(3,2,k)$ and $(3,-2,k)$ this is due to Dubickas \cite{DubickasNrsMM} (note that 
he states the case $N(\alpha)=2$, but then $-\alpha$ does the job in the case of 
negative norm). Together with Theorem \ref{thm:st} and the examples at the beginning 
of this note, we conclude the corollary.
\end{proof}

\section{Behavior of degree $4$ units and proof of Theorem \ref{thm:degree4}}\label{sec:degree4}

In light of Theorem \ref{thm:1}, one might ask if arbitrarily long but finite orbits occur for algebraic units. In this section we will prove Theorem \ref{thm:degree4}, which states that the orbit size of an algebraic unit of degree 4 must be 1, 2, or $\infty$.

Let $\alpha$ be an algebraic unit of degree $4$. If $\alpha$ is a root of unity, a Pisot number, a Salem number or an inverse of such number we surely have $\# \cO_M(\alpha)\leq 2$. Hence, we may and will assume for the rest of this section that the conjugates of $\alpha$ satisfy
\[
\vert \alpha_1 \vert \geq \vert \alpha_2\vert > 1 > \vert \alpha_3 \vert \geq \vert \alpha_4 \vert.
\]
Denote the Galois group of $\mathbb{Q}(\alpha_1,\alpha_2,\alpha_3,\alpha_4)/\mathbb{Q}$ by $G_{\alpha}$. For any $\beta \in \mathbb{Q}(\alpha_1,\alpha_2,\alpha_3,\alpha_4)$ we denote the Galois orbit of $\beta$ by $G_\alpha\cdot \beta$.

Then $M(\alpha)=\pm \alpha_1\alpha_2$ and
\[
G_{\alpha}\cdot (\alpha_1 \alpha_2) \subseteq \{\alpha_1\alpha_2, \alpha_1\alpha_3,\alpha_1\alpha_4,\alpha_2\alpha_3,\alpha_2\alpha_4,\alpha_3\alpha_4 \}.
\]

\begin{lemma}\label{lem:14=1}
If $\vert \alpha_1\alpha_4\vert = 1$ or $\vert \alpha_1\alpha_3\vert = 1$, then 
we have either $\# \cO_M(\alpha)=2$ or $\# \cO_M(\alpha)=\infty$. 
\end{lemma}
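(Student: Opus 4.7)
The plan is to compute $M(\beta)$ for $\beta=M(\alpha)=\pm\alpha_1\alpha_2$ directly, by examining the Galois orbit $T$ of the $2$-subset $\{1,2\}$ inside the six possible pair indices, and to show case by case that either $\beta$ is already a fixed point of $M$ (giving $\#\cO_M(\alpha)=2$) or the iterates of $M$ grow strictly (giving $\#\cO_M(\alpha)=\infty$). As a preliminary remark, $T$ is closed under complementation: $\alpha_I\alpha_{I^c}=\pm1$ by the unit condition, and since any transitive subgroup of $S_4$ contains an element sending $\{1,2\}$ to $\{3,4\}$, one has $\{3,4\}\in T$ and hence $I^c\in T$ whenever $I\in T$.

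The hypothesis $|\alpha_1\alpha_3|=1$, combined with the unit relation, gives $|\alpha_3|=|\alpha_1|^{-1}$ and $|\alpha_4|=|\alpha_2|^{-1}$; the ordering $|\alpha_3|\geq|\alpha_4|$ then forces $|\alpha_1|=|\alpha_2|$. Consequently every mixed pair product $\alpha_i\alpha_j$ (with $i\leq 2<j$) has modulus $1$ while $|\alpha_3\alpha_4|<1$, so the only element of $T$ of absolute value exceeding $1$ is $\{1,2\}$ itself, and hence $M(\beta)=\beta$, giving $\#\cO_M(\alpha)=2$. The same computation handles $|\alpha_1\alpha_4|=1$ with $|\alpha_1|=|\alpha_2|$. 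The remaining case is $|\alpha_1\alpha_4|=1$ with $|\alpha_1|>|\alpha_2|$; there the four $|\alpha_i|$ are pairwise distinct, which forces each $\alpha_i$ to be real (no complex-conjugate partner of matching modulus is available) and rules out any identity $\alpha_i=-\alpha_j$ for $i\neq j$. Setting $\gamma=|\alpha_1|/|\alpha_2|=|\alpha_1\alpha_3|>1$, the six pair absolute values are $\{\beta,\gamma,1,1,\gamma^{-1},\beta^{-1}\}$. If $\{1,3\}\notin T$, then every pair of $T$ other than $\{1,2\}$ contributes a factor $\max(1,|\alpha_I|)=1$, so once again $M(\beta)=\beta$ and $\#\cO_M(\alpha)=2$.

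If instead $\{1,3\}\in T$, then $M(\beta)=\beta\gamma=\alpha_1^2$. Because the $\alpha_i^2$ are pairwise distinct, $\alpha_1^2$ has degree $4$ with conjugates $\alpha_1^2,\alpha_2^2,\alpha_3^2,\alpha_4^2$, giving $M(\alpha_1^2)=\alpha_1^2\alpha_2^2=\beta^2$; this is precisely the assertion $M^{(3)}(\alpha)=M(\alpha)^2$ of Theorem~\ref{thm:degree4}. Iterating, the same no-sign-collapse argument shows that the conjugates of $\beta^2$ are the squares of those of $\beta$, so $M(\beta^2)=M(\beta)^2=\alpha_1^4$, and inductively $M^{(2k)}(\alpha)=\alpha_1^{2^k}$ and $M^{(2k+1)}(\alpha)=\beta^{2^k}$. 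Since $\beta>\alpha_1$ (as $|\alpha_2|>1$) and $\alpha_1^2>\beta$ (as $|\alpha_1|>|\alpha_2|$), this sequence is strictly increasing, hence $\#\cO_M(\alpha)=\infty$. The main technical obstacle is bookkeeping the various possible orbit structures $T$ at once; what makes the final subcase work is the strict inequality $|\alpha_1|>|\alpha_2|$, which simultaneously delivers reality of all roots and the absence of sign coincidences, thereby guaranteeing that the expected identity $M(x^2)=M(x)^2$ remains exact at every iteration.
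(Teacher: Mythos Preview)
Your argument is correct and follows the same overall arc as the paper's proof: reduce to the case $|\alpha_1\alpha_4|=1$ with strict inequality $|\alpha_1|>|\alpha_2|$, observe that all four $\alpha_i$ are then real with pairwise distinct moduli, and split according to whether $\alpha_1\alpha_3$ appears among the Galois conjugates of $\alpha_1\alpha_2$. In the latter case both proofs compute $M^{(2)}(\alpha)=\alpha_1^2$, then $M^{(3)}(\alpha)=\beta^2$, and conclude by the same induction.

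The organizational difference is that the paper pins down the Galois group explicitly: it uses the relations $\alpha_4=\pm\alpha_1^{-1}$, $\alpha_3=\pm\alpha_2^{-1}$ to constrain $G_\alpha$ to lie in a specific eight-element set, and then shows $G_\alpha$ is either the Klein four-group $\{\id,(12)(34),(13)(24),(14)(23)\}$ (giving orbit size $2$) or contains $\langle(1342)\rangle$ (giving orbit size $\infty$). You bypass this classification by working directly with the $G_\alpha$-orbit $T$ of the $2$-subset $\{1,2\}$ and asking only whether $\{1,3\}\in T$. This is a genuine streamlining: you never need to list the transitive subgroups of $S_4$ or match $G_\alpha$ against them, and your treatment of the $|\alpha_1|=|\alpha_2|$ case is unified (the paper splits it into $\alpha_1\notin\bR$ versus $\alpha_1\in\bR$). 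On the other hand, the paper's explicit identification of $G_\alpha$ gives slightly more structural information, and its derivation that $\{1,4\},\{2,3\}\notin T$ is implicit in the group-theoretic constraint, whereas in your approach this fact (though not needed for the Mahler-measure computation, since those pairs have modulus $1$) would require the separate observation that $\alpha_1\alpha_4=\pm1$ is rational and hence cannot be conjugate to $\beta>1$.
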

\begin{proof}
If $\vert \alpha_1\alpha_4\vert=1$, then also $\vert \alpha_2\alpha_3\vert =1$, and if $\vert \alpha_1\alpha_3\vert=1$, then also $\vert \alpha_2\alpha_4\vert =1$. In both cases we see 
\begin{equation}\label{eq:iff}
\vert \alpha_1\vert =\vert \alpha_2\vert ~ \Longleftrightarrow ~ \vert \alpha_3\vert =\vert \alpha_4\vert. 
\end{equation}
We first assume that $\alpha_1\notin \mathbb{R}$. Then $\alpha_2=\overline{\alpha_1}$ and hence $\vert \alpha_1\vert =\vert \alpha_2\vert$. Obviously it is $M(\alpha_1)=\alpha_1\alpha_2$. By our assumptions and \eqref{eq:iff}, all values $\vert \alpha_1\alpha_3\vert$, $\vert\alpha_1\alpha_4\vert$, $\vert\alpha_2\alpha_3\vert$, $\vert\alpha_2\alpha_4\vert$, $\vert\alpha_3\alpha_4\vert$ are less or equal to $1$. Hence $M^{(2)}(\alpha_1)=M(\alpha_1\alpha_2)=\alpha_1\alpha_2$. Therefore, $\# \cO_M(\alpha_1)=2$.

If $\alpha_1\in\mathbb{R}$ and $\vert \alpha_1\vert =\vert \alpha_2\vert$, then $\alpha_2=-\alpha_1$ and $\alpha_4=-\alpha_3$. Hence, the only non-trivial Galois conjugate of $M(\alpha_1)=\alpha_1^2$ is $\alpha_3^2$ and lies inside the unit circle. Therefore, $M^{(2)}(\alpha_1) = \alpha_1^2$ and $\# \cO_M(\alpha_1)=2$.

From now on we assume that $\vert \alpha_1 \vert \neq \vert \alpha_2\vert$. Then, by \eqref{eq:iff}, we have 
\[
\vert \alpha_1 \vert > \vert \alpha_2\vert > 1 > \vert \alpha_3 \vert > \vert \alpha_4\vert
\]
and $\alpha_1$ must be totally real. Moreover, we see
\begin{equation}\label{eq:Modd}
\alpha_1^n, \alpha_2^n, \alpha_3^n, \alpha_4^n \quad \text{ are pairwise distinct for all } n\in\mathbb{N}, 
\end{equation}
and 
\begin{equation}\label{eq:Meven}
(\alpha_1\alpha_2)^n, (\alpha_3\alpha_4)^n, (\alpha_1\alpha_3)^n, (\alpha_2\alpha_4)^n \quad \text{ are pairwise distinct for all } n\in\mathbb{N}. 
\end{equation}
We notice, that in this situation it is not possible that $\vert \alpha_1 \alpha_3 \vert =1$, since otherwise $\vert \alpha_2 \alpha_4\vert <1$ which contradicts $1=\vert \alpha_1\alpha_2\alpha_3\alpha_4\vert$. Therefore, $\vert \alpha_1 \alpha_4 \vert =1$, and $\alpha_4 = \pm \alpha_1^{-1}$. It follows that also $\alpha_3=\pm\alpha_2^{-1}$. This gives natural constraints on the Galois group $G_{\alpha}$, namely
\[
G_{\alpha}\subseteq\{\id,(12)(34),(13)(24),(14)(23),(14),(23),(1342),(1243)\}\subseteq S_4.
\] 
In particular, since $G_{\alpha}$ is a transitive subgroup of $S_4$ with order divisible by $4$, $$G_{\alpha}=\{\id,(12)(34),(13)(24),(14)(23)\} \text{ or } \{\id , (1342),(14)(23),(1243)\} \subseteq G_{\alpha}.$$

In the first case, $G_{\alpha}\cdot (\alpha_1\alpha_2)=\{\alpha_1\alpha_2,\alpha_3\alpha_4\}$, which implies that $\alpha_1\alpha_2$ is a quadratic unit. Hence $\# \cO_M(\alpha)=\# \cO_M(\alpha_1\alpha_2)+1=2$.

In the second case, $G_{\alpha}\cdot (\alpha_1\alpha_2)=\{\alpha_1\alpha_2,\alpha_3\alpha_4,\alpha_1\alpha_3,\alpha_2\alpha_4\}$. Note that $\alpha_1\alpha_2$ is still of degree $4$ by \eqref{eq:Meven}. Hence $M^{(2)}(\alpha_1)=M(\alpha_1\alpha_2)=\pm\alpha_1^2\alpha_2\alpha_3= \alpha_1^2$. By \eqref{eq:Modd} it follows $M^{(3)}(\alpha) = M(\alpha_1^2) =(\alpha_1\alpha_2)^2 = M(\alpha)^2$. Now, by induction and \eqref{eq:Meven} and \eqref{eq:Modd}, it follows $M^{(n)}(\alpha_1)=\alpha_1^{2^n}$ for all even $n \in \mathbb{N}$. Hence $\# \cO_M(\alpha_1)=\infty$.
\end{proof}

From now on, we assume:
\begin{equation}\label{generalassumption}
\vert \alpha_1 \alpha_4 \vert \neq 1\neq \vert \alpha_1\alpha_3\vert.
\end{equation}

\begin{lemma}
Assuming \eqref{generalassumption}, if $\alpha_1^n = \alpha_2^n$ or $\alpha_3^n=\alpha_4^n$ for some $n\in\mathbb{N}$, then $\# \cO_M(\alpha_1)=2$. 
\end{lemma}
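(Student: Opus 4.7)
I would start by reducing the two hypotheses to a single one. If $\alpha_3^n = \alpha_4^n$, then $\eta := \alpha_4/\alpha_3$ is a nontrivial root of unity. By transitivity of $G_\alpha$ on $\{\alpha_1,\alpha_2,\alpha_3,\alpha_4\}$, pick $\sigma \in G_\alpha$ with $\sigma(\alpha_3)=\alpha_1$. Then $\sigma(\alpha_4)=\sigma(\eta)\alpha_1$ has absolute value $|\alpha_1|$, is distinct from $\alpha_1$, and the only candidate available is $\alpha_2$; in particular $|\alpha_1|=|\alpha_2|$ and $\alpha_2/\alpha_1 = \sigma(\eta)$ is a root of unity. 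Thus it suffices to treat the case $\alpha_1^n = \alpha_2^n$, so set $\zeta := \alpha_2/\alpha_1$, a root of unity with $\zeta\neq 1$ (otherwise $\alpha_1=\alpha_2$).

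The key step is to show that $G_\alpha$ permutes the two ``size pairs'' $\{\alpha_1,\alpha_2\}$ and $\{\alpha_3,\alpha_4\}$ as blocks. For any $\sigma\in G_\alpha$ with $\sigma(\alpha_1)=\alpha_i$, the identity $\sigma(\alpha_2)=\sigma(\zeta)\alpha_i$ gives $|\sigma(\alpha_2)|=|\alpha_i|$ while $\sigma(\alpha_2)\neq\sigma(\alpha_1)$. Applied to some $\sigma$ with $\sigma(\alpha_1)=\alpha_3$ (which exists by transitivity), this forces $\sigma(\alpha_2)=\alpha_4$ and, crucially, $|\alpha_3|=|\alpha_4|$; otherwise no element of $\{\alpha_1,\ldots,\alpha_4\}$ besides $\alpha_3$ itself would have absolute value $|\alpha_3|$. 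The two pairs are therefore intrinsically distinguished by absolute value, and block-preservation follows.

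As a consequence, the Galois orbit of $\alpha_1\alpha_2$ lies in $\{\alpha_1\alpha_2,\alpha_3\alpha_4\}$, so $[\mathbb{Q}(\alpha_1\alpha_2):\mathbb{Q}]\leq 2$. If the degree is $1$, then $\alpha_1\alpha_2\in\mathbb{Z}$ and $M(\alpha_1)=|\alpha_1\alpha_2|\in\mathbb{N}$ is trivially a fixed point of $M$. If the degree is $2$, the unit hypothesis gives $\alpha_3\alpha_4=\pm(\alpha_1\alpha_2)^{-1}$. A quick complex-conjugation argument shows $\alpha_1\alpha_2\in\mathbb{R}$: otherwise $\overline{\alpha_1\alpha_2}$ would be the other Galois conjugate $\alpha_3\alpha_4$, forcing $|\alpha_1\alpha_2|=|\alpha_3\alpha_4|=1/|\alpha_1\alpha_2|$ and contradicting $|\alpha_1\alpha_2|>1$. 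Hence $|\alpha_1\alpha_2|$ is a real quadratic algebraic integer greater than $1$ whose unique conjugate (possibly after the substitution $x\mapsto -x$ in the minimal polynomial) has absolute value $<1$ -- in other words, a Pisot number, and hence a fixed point of $M$. In either subcase, $\#\cO_M(\alpha_1)=2$.

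The main obstacle is the block-preservation in the key step: parlaying the algebraic relation $\alpha_2=\zeta\alpha_1$ together with Galois transitivity into the analytic consequence $|\alpha_3|=|\alpha_4|$. Once this structure is in place, everything afterwards is a short case analysis on $[\mathbb{Q}(\alpha_1\alpha_2):\mathbb{Q}]\in\{1,2\}$.
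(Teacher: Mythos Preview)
Your proof is correct and follows essentially the same idea as the paper's: the root-of-unity relation $\alpha_2=\zeta\alpha_1$ is Galois-stable, which forces $G_\alpha$ to preserve the pair structure $\{\alpha_1,\alpha_2\},\{\alpha_3,\alpha_4\}$, and hence the Galois orbit of $\alpha_1\alpha_2$ sits inside $\{\alpha_1\alpha_2,\alpha_3\alpha_4\}$. The paper phrases this as a constraint on $G_\alpha$ via the Galois orbit of $\alpha_1/\alpha_2$ (listing the admissible permutations in $S_4$) and then reads off $M^{(2)}(\alpha_1)=M(\alpha_1)$ directly, whereas you argue block-preservation and finish with a short Pisot/degree-$\leq 2$ case analysis; but the substance is the same.
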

\begin{proof}
Let $\alpha_1^n=\alpha_2^n$ for some $n\in\mathbb{N}$. Then $\frac{\alpha_1}{\alpha_2}$ is a root of unity. Since none of the elements $\frac{\alpha_1}{\alpha_3}$, $\frac{\alpha_1}{\alpha_4}$, $\frac{\alpha_2}{\alpha_3}$, $\frac{\alpha_2}{\alpha_4}$, $\frac{\alpha_3}{\alpha_1}$, $\frac{\alpha_3}{\alpha_2}$, $\frac{\alpha_4}{\alpha_1}$, $\frac{\alpha_4}{\alpha_2}$ lies on the unit circle, we have $G_{\alpha}\cdot (\frac{\alpha_1}{\alpha_2})\subseteq \{\frac{\alpha_1}{\alpha_2},\frac{\alpha_2}{\alpha_1},\frac{\alpha_3}{\alpha_4},\frac{\alpha_4}{\alpha_3}\}$. Hence 
\[
G_{\alpha}\subseteq \{\id, (12),(12)(34),(13)(24),(14)(23),(1324),(1423)\}.
\]
This implies $M^{(2)}(\alpha_1)=M(\pm \alpha_1\alpha_2)= \pm \alpha_1\alpha_2 = M(\alpha_1)$, and hence $\# \cO_M(\alpha_1)=2$. The same proof applies if $\alpha_3^n=\alpha_4^n$.
\end{proof}

\begin{lemma}\label{lem:properties2}
Assuming \eqref{generalassumption} and $\# \cO_M(\alpha_1)>2$, then
\begin{enumerate}[(a)]
\item $\vert \alpha_1 \alpha_2 \vert >1$, $\vert \alpha_1\alpha_3\vert >1$.
\item $\vert \alpha_3 \alpha_4 \vert <1$, $\vert \alpha_2\alpha_4\vert <1$.
\item one of the values $\vert \alpha_1\alpha_4\vert$ and $\vert \alpha_2\alpha_3\vert$ is $<1$ and the other is $>1$.
\item $\alpha_1^n$, $\alpha_2^n$, $\alpha_3^n$, $\alpha_4^n$ are pairwise distinct for all $n\in\mathbb{N}$.
\item $(\alpha_1\alpha_2)^n$, $(\alpha_3\alpha_4)^n$, $(\alpha_1\alpha_3)^n$, $(\alpha_2\alpha_4)^n$ are pairwise distinct for all  $n\in\mathbb{N}$. 
\end{enumerate}
\end{lemma}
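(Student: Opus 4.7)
The plan is to derive parts (a)--(c) directly from the unit relation $|\alpha_1\alpha_2\alpha_3\alpha_4|=1$ in conjunction with the standing ordering $|\alpha_1|\geq|\alpha_2|>1>|\alpha_3|\geq|\alpha_4|$ and assumption \eqref{generalassumption}, and then obtain parts (d)--(e) by absolute value separation combined with the preceding lemma.

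First I would prove (a) and (b) together. The estimates $|\alpha_1\alpha_2|>1$ and $|\alpha_3\alpha_4|<1$ are immediate from the ordering and the unit relation. The core step is showing $|\alpha_1\alpha_3|>1$: the unit relation gives $|\alpha_1\alpha_3|\cdot|\alpha_2\alpha_4|=1$, neither factor equals $1$ by \eqref{generalassumption}, and if one had $|\alpha_1\alpha_3|<1$ then $|\alpha_2|\leq|\alpha_1|$ and $|\alpha_4|\leq|\alpha_3|$ would force $|\alpha_2\alpha_4|\leq|\alpha_1\alpha_3|<1$, contradicting the relation. Hence $|\alpha_1\alpha_3|>1$ and consequently $|\alpha_2\alpha_4|<1$. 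For (c), the unit relation pairs up as $|\alpha_1\alpha_4|\cdot|\alpha_2\alpha_3|=1$ with $|\alpha_1\alpha_4|\neq 1$ by \eqref{generalassumption}, so exactly one of the two values lies below $1$ and the other above.

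For (d), the preceding lemma (applicable precisely because $\#\mathcal{O}_M(\alpha_1)>2$) yields $\alpha_1^n\neq\alpha_2^n$ and $\alpha_3^n\neq\alpha_4^n$ for every $n\in\mathbb{N}$. Each of the remaining four pairs draws one member from $\{\alpha_1,\alpha_2\}$ and one from $\{\alpha_3,\alpha_4\}$; for these, the strict separation $|\alpha_i|>1>|\alpha_j|$ rules out equality of $n$th powers by taking absolute values.

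For (e), parts (a) and (b) separate the set $\{(\alpha_1\alpha_2)^n,(\alpha_1\alpha_3)^n\}$ (absolute value $>1$) from $\{(\alpha_3\alpha_4)^n,(\alpha_2\alpha_4)^n\}$ (absolute value $<1$), so the only potential coincidences are within each of these two subsets. Cancelling the common factor reduces $(\alpha_1\alpha_2)^n=(\alpha_1\alpha_3)^n$ to $\alpha_2^n=\alpha_3^n$, and $(\alpha_3\alpha_4)^n=(\alpha_2\alpha_4)^n$ to $\alpha_3^n=\alpha_2^n$; both are excluded by (d). No single step is hard; the only care required is to invoke \eqref{generalassumption} and the preceding lemma at exactly the points where they convert weak inequalities into strict ones.
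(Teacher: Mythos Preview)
Your proof is correct and follows essentially the same route as the paper's: parts (a)--(c) come from the unit relation, the ordering, and assumption \eqref{generalassumption}; part (d) combines absolute-value separation with the contrapositive of the preceding lemma; and part (e) is deduced from (a), (b), and (d). The only difference is that you spell out the reduction in (e) explicitly, whereas the paper simply asserts that (e) follows from (a), (b), and (d).
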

\begin{proof}
Obviously $\vert \alpha_1\alpha_2 \vert >1$ and $\vert \alpha_3\alpha_4\vert <1$. Moreover, $1\neq\vert \alpha_1\alpha_3 \vert \geq \vert \alpha_2\alpha_4\vert$ and $ \vert \alpha_1\alpha_3 \vert \cdot \vert \alpha_2\alpha_4\vert =1$. This means $\vert\alpha_1\alpha_3\vert >1$ and $\vert \alpha_2\alpha_4\vert <1$, proving parts (a) and (b).

Since $\vert \alpha_1\alpha_4\vert \cdot \vert \alpha_2\alpha_3\vert=1$ and $\vert \alpha_1\alpha_4\vert \neq 1$, part (c) follows.

The elements $\alpha_1$ and $\alpha_2$ lie outside the unit circle, and $\alpha_3$ and $\alpha_4$ lie inside or on the unit circle. Hence, the only possibilities for (d) to fail are $\alpha_1^n=\alpha_2^n$ or $\alpha_3^n=\alpha_4^n$ for some $n\in\mathbb{N}$. By the previous lemma, both implies $\# \cO_M(\alpha_1)=2$, which is excluded by our assumptions.

Part (e) follows immediately from (a), (b) and (d).
\end{proof}

\begin{lemma}\label{lem:infty}
If $M^{(3)}(\alpha_1)=M(\alpha_1)^2$ and $\# \cO_M(\alpha_1)>2$, then $\# \cO_M(\alpha_1)=\infty$.
\end{lemma}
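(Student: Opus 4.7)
Set $\beta = M(\alpha_1)$ and $\gamma = M^{(2)}(\alpha_1) = M(\beta)$, so the hypothesis reads $M(\gamma) = \beta^2$. Since $\#\cO_M(\alpha_1) > 2$ we have $\gamma > \beta > 1$. The plan is to prove by induction on $n \geq 1$ that
\[
M^{(2n-1)}(\alpha_1) = \beta^{2^{n-1}} \quad \text{and} \quad M^{(2n)}(\alpha_1) = \gamma^{2^{n-1}}.
\]
This gives $M^{(2n-1)}(\alpha_1) \to \infty$, and therefore $\#\cO_M(\alpha_1) = \infty$. The base case $n = 1$ is immediate from the hypothesis.

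For the inductive step it suffices to verify the two identities $M(\beta^m) = M(\beta)^m = \gamma^m$ and $M(\gamma^m) = M(\gamma)^m = \beta^{2m}$ for every $m \geq 1$. These reduce to the following observation: if $\delta$ is an algebraic unit whose Galois conjugates $\delta_1, \ldots, \delta_r$ have pairwise distinct $m$-th powers, then the minimal polynomial of $\delta^m$ is $\prod_i(x - \delta_i^m) \in \mathbb{Z}[x]$, and hence $M(\delta^m) = \prod_{|\delta_i|>1} |\delta_i|^m = M(\delta)^m$. So the task is to check that the $m$-th powers of the Galois conjugates of $\beta$ and of $\gamma$ are pairwise distinct.

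Here the Perron-number structure enters. Both $\beta = M(\alpha_1)$ and $\gamma = M(\beta)$ are Mahler measures, hence Perron numbers: each is a real number greater than $1$ that is strictly larger in absolute value than any other of its Galois conjugates. Suppose $\sigma(\delta)^m = \tau(\delta)^m$ for Galois automorphisms $\sigma, \tau$ of a Galois extension containing $\delta \in \{\beta, \gamma\}$. Applying $\tau^{-1}$ gives $(\tau^{-1}\sigma)(\delta)^m = \delta^m$, so $(\tau^{-1}\sigma)(\delta) = \zeta\delta$ for some $m$-th root of unity $\zeta$. Then $|(\tau^{-1}\sigma)(\delta)| = \delta$, and the uniqueness of the largest conjugate forces $(\tau^{-1}\sigma)(\delta) = \delta$, i.e., $\zeta = 1$ and $\sigma(\delta) = \tau(\delta)$. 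Hence the $m$-th powers of the conjugates of $\beta$ and of $\gamma$ are pairwise distinct, as required.

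The main obstacle is the Perron argument just given: it relies critically on the fact that Mahler measures are Perron numbers (Adler--Marcus), so that the largest conjugate is uniquely attained. With the identities $M(\beta^m) = \gamma^m$ and $M(\gamma^m) = \beta^{2m}$ established, the induction closes and we obtain $M^{(2n-1)}(\alpha_1) = \beta^{2^{n-1}} \to \infty$, so $\#\cO_M(\alpha_1) = \infty$.
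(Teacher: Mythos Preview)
Your proof is correct, and it is cleaner and more self-contained than the paper's. The paper splits into cases according to assumption \eqref{generalassumption} and then invokes Lemma~\ref{lem:properties2} (d) and (e), which are degree-$4$--specific statements about the conjugates $\alpha_1,\ldots,\alpha_4$, to ensure that the relevant powers of conjugates remain pairwise distinct and so the ``easy induction'' from the end of Lemma~\ref{lem:14=1} goes through. You bypass all of this structure by working abstractly with $\beta=M(\alpha_1)$ and $\gamma=M(\beta)$ and using only the fact that Mahler measures are Perron numbers: if $\delta$ is Perron and $\sigma(\delta)^m=\tau(\delta)^m$, then $|\tau^{-1}\sigma(\delta)|=\delta$, forcing $\tau^{-1}\sigma(\delta)=\delta$. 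This gives $M(\delta^m)=M(\delta)^m$ for $\delta\in\{\beta,\gamma\}$ in one stroke, and the induction closes immediately.

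The upshot is that your argument does not depend on $\alpha_1$ having degree $4$, on the shape of the Galois group, or on any of the case analysis in Section~\ref{sec:degree4}; it proves the statement for \emph{any} algebraic unit $\alpha_1$ with $\#\cO_M(\alpha_1)>2$ and $M^{(3)}(\alpha_1)=M(\alpha_1)^2$. The paper's route is tied to the ambient degree-$4$ analysis, whereas yours isolates the single general fact (Perron $\Rightarrow$ distinct $m$-th powers of conjugates) that actually drives the induction.
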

\begin{proof}
This is true if assumption \eqref{generalassumption} is not satisfied, by Lemma \ref{lem:14=1}. If we assume \eqref{generalassumption}, then by Lemma \ref{lem:properties2} (d) and (e), we are in the same situation as at the end of the proof of Lemma \ref{lem:14=1}. Hence, an easy induction proves the claim.
\end{proof}

We now complete the proof of the statement that $\# \cO_M(\alpha_1)\in\{1,2,\infty\}$. 
It suffices to prove this under the assumption \eqref{generalassumption}. From 
now on we assume $\# \cO_M(\alpha)>2$ and show that this implies $\# \cO_M(\alpha)=\infty$. 
By Lemma \ref{lem:properties2}, we have
\begin{align}
M^{(2)}(\alpha) &\in\{\pm\alpha_1^2\alpha_2\alpha_3,\pm\alpha_1^3\alpha_2\alpha_3\alpha_4,\pm\alpha_1^2\alpha_2^2\alpha_3^2,\pm\alpha_1^2\alpha_2\alpha_4,\pm\alpha_1\alpha_2^2\alpha_3\}  \nonumber \\ &=\{\pm\frac{\alpha_1}{\alpha_4}, \pm\alpha_1^2,\pm\frac{1}{\alpha_4^2},\pm\frac{\alpha_1}{\alpha_3},\pm\frac{\alpha_2}{\alpha_4} \}
\end{align}
In two of these cases the orbit of $\alpha$ can be determined immediately:
\begin{itemize}
\item If $M^{(2)}(\alpha)=\pm \alpha_1^2$, then (since we have $\# \cO_M(\alpha)>2$) it is $\alpha_1^n \neq \alpha_2^n$ for all $n\in\mathbb{N}$. Hence $M^{(3)}(\alpha)=M(\alpha)^2$ which implies $\# \cO_M(\alpha)=\infty$.
\item Similarly, if $M^{(2)}(\alpha)=\pm \frac{1}{\alpha_4^2}$, then  (since $\# \cO_M(\alpha)>2$) it is $\alpha_3^n \neq \alpha_4^n$ for all $n\in\mathbb{N}$. Hence $M^{(3)}(\alpha)=M(\alpha_4^2)=M(\alpha)^2$ and again $\# \cO_M(\alpha)=\infty$. 
\end{itemize}

We now study the other three cases.

\subsection{The case $M^{(2)}(\alpha)=\pm\frac{\alpha_1}{\alpha_4}$} 

This case occurs if $\alpha_1\alpha_3\in G_{\alpha}\cdot(\alpha_1\alpha_2)$, and
\begin{itemize}
\item $\vert \alpha_1\alpha_4\vert >1$ but $\alpha_1\alpha_4 \not\in G_{\alpha}\cdot(\alpha_1\alpha_2)$, or
\item $\vert \alpha_2\alpha_3\vert >1$ but $\alpha_2\alpha_3 \not\in G_{\alpha}\cdot(\alpha_1\alpha_2)$.
\end{itemize}
 
In both cases the only possibilities for $G_{\alpha}$ are the following copies 
of the cyclic group $C_4$ and the dihedral group $D_8$:
\begin{itemize}
\item[(I)] $C_4=\{\id, (1342), (14)(23), (1243) \}$, or
\item[(II)] $D_8=\{\id, (1243),(14)(23),(1342),(12)(34),(13)(24),(14),(23) \}$.
\end{itemize} 

In both cases a full set of conjugates of $\frac{\alpha_1}{\alpha_4}$ is $\{\frac{\alpha_1}{\alpha_4}, \frac{\alpha_3}{\alpha_2},\frac{\alpha_4}{\alpha_1}, \frac{\alpha_2}{\alpha_3}  \}$. It follows 
\[
M^{(3)}(\alpha)=M(\frac{\alpha_1}{\alpha_4})=\pm \frac{\alpha_1}{\alpha_4}\cdot \frac{\alpha_2}{\alpha_3}=   (\alpha_1\alpha_2)^2 = M(\alpha)^2
\]
Hence, by Lemma \ref{lem:infty} we have $\# \cO_M(\alpha)=\infty$.

\subsection{The case $M^{(2)}(\alpha)=\pm\frac{\alpha_1}{\alpha_3}$} 

This case occurs if $\alpha_1\alpha_3 \not\in G_{\alpha}\cdot(\alpha_1\alpha_2)$, and
$\vert \alpha_1\alpha_4\vert >1$, and $\alpha_1\alpha_4 \in G_{\alpha}\cdot(\alpha_1\alpha_2)$.

Hence, the only possibilities for $G_{\alpha}$ are the following copies of the cyclic group $C_4$ and the dihedral group $D_8$:
\begin{itemize}
\item[(I)] $C_4=\{\id, (1234), (13)(24), (1432) \}$, or
\item[(II)] $D_8=\{\id, (1234),(13)(24),(1432),(12)(34),(14)(23),(13),(24) \}$.
\end{itemize} 

In both cases a full set of conjugates of $\frac{\alpha_1}{\alpha_3}$ is $\{\frac{\alpha_1}{\alpha_3}, \frac{\alpha_2}{\alpha_4},\frac{\alpha_3}{\alpha_1}, \frac{\alpha_4}{\alpha_2}  \}$. It follows 
\[
M^{(3)}(\alpha)=M(\frac{\alpha_1}{\alpha_3})=\pm \frac{\alpha_1}{\alpha_3}\cdot \frac{\alpha_2}{\alpha_4}=   (\alpha_1\alpha_2)^2 = M(\alpha)^2
\]
Hence, again we have $\# \cO_M(\alpha)=\infty$ by Lemma \ref{lem:infty}.

\subsection{The case $M^{(2)}(\alpha)=\pm\frac{\alpha_2}{\alpha_4}$} 

This case occurs if $\alpha_1\alpha_3 \not\in G_{\alpha}\cdot(\alpha_1\alpha_2)$, and
$\vert \alpha_2\alpha_3\vert >1$, and $\alpha_2\alpha_3 \in G_{\alpha}\cdot(\alpha_1\alpha_2)$.

Hence, the only possibilities for $G_{\alpha}$ are the following copies of the cyclic group $C_4$ and the dihedral group $D_8$:
\begin{itemize}
\item[(I)] $C_4=\{\id, (1234), (13)(24), (1432) \}$, or
\item[(II)] $D_8=\{\id, (1234),(13)(24),(1432),(12)(34),(14)(23),(13),(24) \}$.
\end{itemize} 

In both cases a full set of conjugates of $\frac{\alpha_2}{\alpha_4}$ is $\{\frac{\alpha_2}{\alpha_4}, \frac{\alpha_3}{\alpha_1},\frac{\alpha_4}{\alpha_2}, \frac{\alpha_1}{\alpha_3}  \}$. It follows 
\[
M^{(3)}(\alpha)=M(\frac{\alpha_2}{\alpha_4})=\pm \frac{\alpha_2}{\alpha_4}\cdot \frac{\alpha_1}{\alpha_3}=  \pm (\alpha_1\alpha_2)^2 = M(\alpha)^2
\]
Hence, also in this case we have $\# \cO_M(\alpha)=\infty$. 

This concludes the proof of Theorem \ref{thm:degree4}. We now prove Corollary \ref{cor:to-thm-degree4}:
\begin{proof}[Proof of Corollary \ref{cor:to-thm-degree4}]
Let $\alpha$ be an algebraic unit of degree $4$. We set \[a_n=\log(M^{(n)}(\alpha))\] for all $n\in\mathbb{N}$. If $\# \cO_M(\alpha)\leq 2$, then $a_{n+1}=a_n$ for all $n\in\mathbb{N}$. If $\# \cO_M(\alpha)=\infty$, then Theorem \ref{thm:degree4} tells us $a_3=2a_1$. Moreover, $M^{(4)}(\alpha)=M(M^{(3)}(\alpha))=M(M(\alpha)^2)=M(M(\alpha))^2=M^{(2)}(\alpha)^2$. Hence, $a_4=2a_2$, and by induction we find $a_{n+1}=2a_{n-1}$, proving the claim.
\end{proof}

\section{Symmetric and alternating Galois groups}\label{sec:Sd-or-Ad}

In this section we will prove Theorem \ref{thm:Sd-or-Ad}. We know that $\# \cO_M(\al) \in \{1,2,\infty\}$ whenever $\al$ is an algebraic unit of degree $\leq 4$. (We note in passing that the orbit size for units of degree less than $4$ is trivially $1$ or $2$.) So we assume from now on that $\alpha$ is an algebraic unit with $[\mathbb{Q}(\alpha):\mathbb{Q}]=d\geq 5$. Denote by $G_{\alpha}$ the Galois group of the Galois closure of $\mathbb{Q}(\alpha)$. We assume that $G_{\alpha}$ contains a subgroup isomorphic to $A_d$, so $G_{\alpha}$ is either the full symmetric group or the alternating group. Every self-reciprocal polynomial admits natural restrictions on which permutations of the zeros are given by field automorphisms. Hence, $\alpha$ cannot be conjugated to $\pm$ a Salem number (see \cite{Salem-Galoisgroup} for more precise statements on the structure of the Galois group $G_\al$, when $\alpha$ is a Salem number). If one of $\pm\al^{\pm1}$ is conjugated to a Pisot number, then surely $\# \cO_M(\al)\in\{1,2\}$. Hence, we assume from now on that none of $\pm\al^{\pm 1}$ is conjugated to a Pisot number. 

Hence, if we denote by $\alpha_1,\ldots,\alpha_d$ the Galois conjugates of $\alpha$, we assume  
\begin{align}\label{eq:conjugatedistribution}
\vert \alpha_1\vert \geq \vert \alpha_2\vert \geq \ldots \geq \vert \alpha_r\vert > 1 \geq \vert \alpha_{r+1} \vert \geq \ldots \geq \vert \alpha_d \vert,\\ \text{where } r\in\{2,\ldots,d-2\} \text{ and } 1> \vert \alpha_{d-1} \vert. \nonumber
\end{align}
We identify $G_{\alpha}$ with a subgroup of $S_d$, by the action on the indices of $\alpha_1,\ldots,\alpha_d$. In particular, for any $\sigma \in A_d$ and any $f_1,\ldots,f_d \in \mathbb{Z}$ the element
\[
\sigma\cdot (\alpha_1^{f_1}\cdot\ldots\cdot\alpha_d^{f_d}):=\alpha_{\sigma(1)}^{f_1}\cdot\ldots\cdot\alpha_{\sigma(d)}^{f_d}
\] 
is a Galois conjugate of $\alpha_1^{f_1}\cdot\ldots\cdot\alpha_d^{f_d}$.

\begin{lemma}\label{lem:newGal}
Let $i,j,k,l\in\{1,\ldots,d\}$ be pairwise distinct, and let $f_1,\ldots,f_d \in \mathbb{Z}$. Then
\begin{enumerate}[(a)]
\item $(i,j,k)\cdot (\alpha_1^{f_1}\cdots \alpha_d^{f_d}) = \alpha_1^{f_1}\cdots \alpha_d^{f_d} ~ \Longleftrightarrow ~ f_i=f_j=f_k$.
\item $(i,j)(k,l)\cdot (\alpha_1^{f_1}\cdots \alpha_d^{f_d}) = \alpha_1^{f_1}\cdots \alpha_d^{f_d} ~ \Longleftrightarrow ~ f_i=f_j \text{ and } f_k=f_l$.
\end{enumerate}
\end{lemma}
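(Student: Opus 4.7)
The plan is to reduce both parts to a single multiplicative-independence statement for the conjugates $\alpha_1,\ldots,\alpha_d$ of $\alpha$, and then to invoke representation theory of $A_d$ to prove it. First, I would compute $(\sigma\cdot m)/m$ explicitly using $\sigma\cdot\prod_t\alpha_t^{f_t}=\prod_t\alpha_{\sigma(t)}^{f_t}$: for $\sigma=(i,j,k)$ this quotient equals $\alpha_i^{f_k-f_i}\alpha_j^{f_i-f_j}\alpha_k^{f_j-f_k}$, and for $\sigma=(i,j)(k,l)$ it equals $\alpha_i^{f_j-f_i}\alpha_j^{f_i-f_j}\alpha_k^{f_l-f_k}\alpha_l^{f_k-f_l}$. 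The ``$\Leftarrow$'' directions are then immediate by substitution. For the ``$\Rightarrow$'' directions, it suffices to classify all integer tuples $(g_1,\ldots,g_d)$ for which the monomial $\prod_t\alpha_t^{g_t}$ equals $1$.

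The main step is to show that the sublattice
\[
\Lambda:=\{(g_1,\ldots,g_d)\in\mathbb{Z}^d:\alpha_1^{g_1}\cdots\alpha_d^{g_d}\in L^{\times}_{\mathrm{tors}}\}
\]
of the permutation module $\mathbb{Z}^d$ (where $L$ is the Galois closure of $\mathbb{Q}(\alpha)$) equals $\mathbb{Z}\cdot(1,\ldots,1)$. It is clearly a $\mathbb{Z}[G_\alpha]$-submodule, and it is saturated since $\mathcal{O}_L^{\times}/(\mathcal{O}_L^{\times})_{\mathrm{tors}}$ is torsion-free by Dirichlet's unit theorem. After tensoring with $\mathbb{Q}$, the standard decomposition $\mathbb{Q}^d=\mathbb{Q}\cdot(1,\ldots,1)\oplus V$ (trivial $\oplus$ standard representation, with $V=\{x:\sum_t x_t=0\}$) presents $\mathbb{Q}^d$ as a sum of two irreducible $\mathbb{Q}[A_d]$-modules---this is where $d\geq 5$ enters essentially. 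So $\Lambda\otimes\mathbb{Q}$ is one of $\{0,\mathbb{Q}\cdot(1,\ldots,1),V,\mathbb{Q}^d\}$. It cannot contain $V$: otherwise $e_i-e_j\in\Lambda$ for all $i\neq j$, forcing every $\alpha_i/\alpha_j$ to be a root of unity and $|\alpha_i|=|\alpha_j|$ for all $i,j$, contradicting \eqref{eq:conjugatedistribution}. Since $\prod_t\alpha_t=\pm 1$ gives $(1,\ldots,1)\in\Lambda$, I conclude $\Lambda\otimes\mathbb{Q}=\mathbb{Q}\cdot(1,\ldots,1)$ and hence $\Lambda=\mathbb{Z}\cdot(1,\ldots,1)$.

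Finally, I would apply this to the exponent tuples from the first step: each has support of size $\leq 4<d$ and therefore has at least one zero coordinate. The only element of $\mathbb{Z}\cdot(1,\ldots,1)$ with a zero coordinate is $0$, so the tuple itself must vanish, yielding $f_i=f_j=f_k$ in (a) and $f_i=f_j$, $f_k=f_l$ in (b).

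The main obstacle I anticipate is the representation-theoretic input---the irreducibility of the standard representation $V$ as a $\mathbb{Q}[A_d]$-module, which follows from the fact that the partition $(d-1,1)$ is not self-conjugate for $d\geq 4$. This is precisely where the assumption $d\geq 5$ from the start of the section is genuinely needed. A more elementary alternative would exploit $3$-transitivity of $A_d$ on $\{1,\ldots,d\}$ to derive the same conclusion by applying several double transpositions to a hypothetical relation and comparing, but the representation-theoretic framing is considerably cleaner.
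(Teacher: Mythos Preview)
Your proof is correct and takes a genuinely different route from the paper. The paper argues directly and elementarily: from the relation $\alpha_j^{f_i-f_j}\alpha_k^{f_j-f_k}\alpha_i^{f_k-f_i}=1$ in part (a) it uses $d\ge 5$ to pick two spare indices $p,q$ so that the even permutations $(i,j)(p,q)$, $(i,k)(p,q)$, $(j,k)(p,q)$, $(i,j,k)$, $(i,k,j)$ lie in $G_\alpha$, applies them to produce five companion relations, and divides pairs of these to obtain identities such as $(\alpha_i/\alpha_k)^{2f_k-f_i-f_j}=1$; since $\alpha_i/\alpha_k$ is a Galois conjugate of $\alpha_1/\alpha_d$, which lies off the unit circle, it is not a root of unity and the exponent must vanish. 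Part (b) is handled the same way, ending with $\bigl(\tfrac{\alpha_j\alpha_l}{\alpha_i\alpha_k}\bigr)^{(f_j-f_i)+(f_k-f_l)}=1$ and the observation that $\tfrac{\alpha_j\alpha_l}{\alpha_i\alpha_k}$ is conjugate to $\tfrac{\alpha_1\alpha_2}{\alpha_{d-1}\alpha_d}$. Your approach instead classifies \emph{all} multiplicative relations among the $\alpha_t$ in one stroke via the $\mathbb{Q}[A_d]$-module structure of $\mathbb{Q}^d$, obtaining the stronger statement $\Lambda=\mathbb{Z}\cdot(1,\ldots,1)$; this is cleaner and more conceptual, and in fact immediately yields the paper's next lemma (Lemma~\ref{lem:Mnreal}) as well, since any relation $\sigma\cdot m=m$ produces an exponent tuple in $\Lambda$ with coordinate sum zero. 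The paper's method, by contrast, is entirely self-contained---no representation theory is invoked---and is essentially the ``$3$-transitivity'' alternative you allude to at the end. One small remark: your final step via ``support $\le 4<d$'' is correct, but you could equally note that the exponent tuple always lies in $V$ (its coordinates sum to zero), so the only element of $\mathbb{Z}\cdot(1,\ldots,1)$ it can equal is $0$; this avoids even the mild use of $d\ge 5$ at that point.
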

\begin{proof}
In both statements, the implication $\Longleftarrow$ is trivial. Lets start with the other implication in (a). It is
\begin{align*}
(i,j,k)\cdot (\alpha_1^{f_1}\cdots \alpha_d^{f_d}) = \alpha_1^{f_1}\cdots \alpha_d^{f_d} ~ \Longrightarrow ~ \alpha_j^{f_i - f_j}\cdot \alpha_k^{f_j-f_k}\cdot \alpha_i^{f_k-f_i}=1  
\end{align*}
Since $d\geq 5$, we may choose two conjugates of $\alpha$ not among $\alpha_i,\alpha_j,\alpha_k$ -- say $\alpha_p$ and $\alpha_q$. Since $G_{\alpha}$ contains $A_d$, the elements $(i,j)(p,q)$, $(i,k)(p,q)$, $(j,k)(p,q)$, $(i,j,k)$, and $(i,k,j)$ are all contained in $G_{\alpha}$. Applying these automorphisms to $\alpha_j^{f_i - f_j}\cdot \alpha_k^{f_j-f_k}\cdot \alpha_i^{f_k-f_i}=1$, yields
\begin{align*}
\alpha_j^{f_i - f_j}\cdot \alpha_k^{f_j-f_k}\cdot \alpha_i^{f_k-f_i} &=1 =\alpha_j^{f_i - f_j}\cdot \alpha_i^{f_j-f_k}\cdot \alpha_k^{f_k-f_i}\\
\alpha_i^{f_i - f_j}\cdot \alpha_j^{f_j-f_k}\cdot \alpha_k^{f_k-f_i} &=1 =\alpha_k^{f_i - f_j}\cdot \alpha_j^{f_j-f_k}\cdot \alpha_i^{f_k-f_i}\\
\alpha_k^{f_i - f_j}\cdot \alpha_i^{f_j-f_k}\cdot \alpha_j^{f_k-f_i} &=1 =\alpha_i^{f_i - f_j}\cdot \alpha_k^{f_j-f_k}\cdot \alpha_j^{f_k-f_i}.
\end{align*}
Hence
\[
\left( \frac{\alpha_i}{\alpha_k} \right)^{2f_k-f_i-f_j} =1, \qquad \left( \frac{\alpha_i}{\alpha_k} \right)^{2f_i-f_j-f_k} =1, \quad \text{and} \quad \left( \frac{\alpha_i}{\alpha_k} \right)^{2f_j-f_k-f_i} =1.
\]
But $\frac{\alpha_i}{\alpha_k}$ is no root of unity, since it is a Galois 
conjugate of $\frac{\alpha_1}{\alpha_d}$, which lies outside the unit 
circle. It follows $2f_k-f_i-f_j=2f_i-f_j-f_k=2f_j-f_k-f_i=0$, and hence 
$f_i=f_j=f_k$. This proves part (a).

Part (b) follows similarly: $(i,j)(k,l)\cdot (\alpha_1^{f_1}\cdots \alpha_d^{f_d})=\alpha_1^{f_1}\cdots \alpha_d^{f_d}$ implies
\[
\alpha_i^{f_j}\cdot \alpha_j^{f_i}\cdot \alpha_k^{f_l}\cdot \alpha_l^{f_k}= \alpha_i^{f_i}\cdot \alpha_j^{f_j}\cdot \alpha_k^{f_k}\cdot \alpha_l^{f_l}.
\]
Without loss of generality, we assume $f_j\geq f_i$ and $f_k\geq f_l$. Using 
that $(i,l)(j,k)$ is an element of $G_{\alpha}$, we get
\[
\left( \frac{\alpha_j}{\alpha_i} \right)^{f_j-f_i} = \left( 
\frac{\alpha_k}{\alpha_l} \right)^{f_k-f_l} \quad \text{ and } \quad \left( 
\frac{\alpha_k}{\alpha_l} \right)^{f_j-f_i} = \left( \frac{\alpha_j}{\alpha_i} 
\right)^{f_k-f_l}.
\]
Multiplying both equations yields
\[
\left( \frac{\alpha_j}{\alpha_i} \right)^{(f_j - f_i)+(f_k-f_l)}=\left( \frac{\alpha_k}{\alpha_l} \right)^{(f_j - f_i)+(f_k-f_l)},
\]
and hence 
\[
\left( \frac{\alpha_j\cdot \alpha_l}{\alpha_i\cdot \alpha_k} \right)^{(f_j - f_i)+(f_k-f_l)}=1.
\]
Again, $\frac{\alpha_j\cdot \alpha_l}{\alpha_i\cdot \alpha_k}$ is a Galois conjugate of $\frac{\alpha_1\cdot \alpha_2}{\alpha_{d-1}\cdot \alpha_d}$, which lies outside the unit circle, and hence is not a root of unity. Therefore $(f_j - f_i)+(f_k-f_l)=0$. Since $f_j\geq f_i$ und $f_k\geq f_l$, it follows $f_j=f_i$ and $f_k = f_l$, proving the lemma.
\end{proof}

\begin{lemma}\label{lem:Mnreal}
Let $f_1,\ldots,f_d$ be pairwise distinct integers. Then $[\mathbb{Q}(\alpha_1^{f_1}\cdots\alpha_d^{f_d}):\mathbb{Q}]=\# G_{\alpha}$.
\end{lemma}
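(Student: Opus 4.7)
The plan is to prove that the stabilizer $H=\operatorname{Stab}_{G_\alpha}(\beta)$ is trivial, where $\beta=\alpha_1^{f_1}\cdots\alpha_d^{f_d}$; by orbit-stabilizer this gives $[\mathbb{Q}(\beta):\mathbb{Q}]=\#(G_\alpha\cdot\beta)=\#G_\alpha$, as desired. For $\sigma\in H$, I will rewrite $\sigma(\beta)=\beta$ as the multiplicative relation
\[
\prod_{l=1}^{d}\alpha_{l}^{e_l}=1 \quad\text{with}\quad e_l:=f_{\sigma^{-1}(l)}-f_l,
\]
which automatically satisfies $\sum_{l}e_l=0$. The proof will therefore reduce to the following key claim: every integral multiplicative relation $\prod_l\alpha_l^{e_l}=1$ forces all $e_l$ to coincide.

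To establish the claim, I fix any triple of distinct indices $i,j,k$ and apply the $3$-cycle $\tau=(i,j,k)\in A_d\subseteq G_\alpha$ to the relation; dividing by the original produces a ``triangular'' relation
\[
\alpha_i^{a}\alpha_j^{b}\alpha_k^{c}=1 \quad\text{with}\quad a=e_k-e_i,\ b=e_i-e_j,\ c=e_j-e_k,
\]
so that $a+b+c=0$. This is precisely the situation handled in the proof of Lemma \ref{lem:newGal}(a), and the same argument applies: apply each of the double transpositions $(i,j)(p,q)$, $(i,k)(p,q)$, $(j,k)(p,q)\in A_d$ (available since $d\geq 5$ supplies auxiliary indices $p,q\notin\{i,j,k\}$) to the triangular relation and divide by the original, obtaining equations of the form $(\alpha_i/\alpha_j)^{a-b}=1$, $(\alpha_i/\alpha_k)^{a-c}=1$, $(\alpha_j/\alpha_k)^{b-c}=1$. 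Since \eqref{eq:conjugatedistribution} gives $|\alpha_1/\alpha_d|>1$, and since the $2$-transitivity of $A_d$ makes every ratio $\alpha_i/\alpha_j$ (with $i\neq j$) a Galois conjugate of $\alpha_1/\alpha_d$, no such ratio is a root of unity. Hence $a=b=c$, and combined with $a+b+c=0$ this forces $a=b=c=0$, i.e., $e_i=e_j=e_k$. Because the triple is arbitrary, all of the $e_l$ coincide, proving the claim.

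To finish, I apply the claim with $e_l=f_{\sigma^{-1}(l)}-f_l$: the common value $e$ satisfies $de=\sum_l e_l=0$, so $e=0$ and $f_{\sigma^{-1}(l)}=f_l$ for every $l$. Thus $f$ is constant on each cycle of $\sigma$, and the distinctness of the $f_i$ forces every cycle of $\sigma$ to be a fixed point, giving $\sigma=\id$. The only real technical obstacle is the non-root-of-unity property of $\alpha_i/\alpha_j$, which is handled cleanly by combining \eqref{eq:conjugatedistribution} with the $2$-transitivity of $A_d$; note that the argument works uniformly for $\sigma\in G_\alpha$ even when $G_\alpha=S_d$, since only the $A_d$-part of the action is used to manipulate the multiplicative relation.
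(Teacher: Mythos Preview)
Your proof is correct and follows essentially the same approach as the paper: both reduce to the multiplicative relation $\prod_l\alpha_l^{e_l}=1$ and use the mechanism of Lemma~\ref{lem:newGal} (you re-derive it, the paper cites it) to force all $e_l$ to coincide. Your version is slightly cleaner in two places: you treat odd and even $\sigma$ uniformly (the paper splits off the odd case and invokes Smyth's lemma), and you obtain $e=0$ directly from $\sum_l e_l=0$ rather than via the telescoping argument using $\sigma^{d!}=\id$.
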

\begin{proof}
The proof is essentially the same as the proof of part (1) in Theorem 1.1 from 
\cite{Amoroso}. Assume there is a $\sigma^{-1}\in G_{\alpha}\subseteq S_d$ such 
that $\alpha_1^{f_1}\cdots\alpha_d^{f_d} = 
\sigma^{-1}\cdot(\alpha_1^{f_1}\cdots\alpha_d^{f_d})$. Then
\begin{equation}\label{eqn:lem9-1}
1 = \alpha_1^{f_1-f_{\sigma(1)}}\cdots \alpha_d^{f_d-f_{\sigma(d)}}.
\end{equation}
If $\sigma$ is an odd permutation, then $G_\al=S_d$, then it was already proven by Smyth (see Lemma 1 of \cite{SmythAddMultRels}) that $f_i=f_{\sigma(i)}$ for all $i$, hence that $\sigma=\id$. If $\sigma$ is an even permutation, then by repeated application of Lemma \ref{lem:newGal} to equation \eqref{eqn:lem9-1} above, this is only possible if $f_i-f_{\sigma(i)}$ is the same integer for all $i\in\{1,\ldots,d\}$, say $f_i-f_{\sigma(i)}=k$.

Since $\sigma^{d!}=\id$, it follows
\[
f_1 = k+ f_{\sigma(1)} = 2k + f_{\sigma^2(1)} = \ldots = d!\cdot k + 
f_{\sigma^{d!} (1)}=d!\cdot k + f_1,
\]
and hence $k=0$. Therefore we have $f_i=f_{\sigma(i)}$ for all 
$i\in\{1,\ldots,d\}$. But by assumption the integers $f_1,\ldots,f_d$ are 
pairwise distinct, hence, we must again have $\sigma=\id$. Since in either 
case, $\sigma=\id$, this means that the images of 
$\alpha_1^{f_1}\cdots\alpha_d^{f_d}$ are distinct under each non-identity 
element of $G_\al$, so 
$[\mathbb{Q}(\alpha_1^{f_1}\cdots\alpha_d^{f_d}):\mathbb{Q}]=\# G_{\alpha} $. 
\end{proof}

\begin{prop}\label{prop:expdiffer}
Let $M^{(n)}(\alpha)=\alpha_1^{e_1}\cdot \ldots\cdot e_d^{e_d}$ such that the exponents $e_1,\ldots,e_d$ are pairwise distinct. 
Then $M^{(n+1)}(\alpha)> M^{(n)}(\alpha)$.
\end{prop}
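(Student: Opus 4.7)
The plan is to argue by contradiction, using an averaging identity over all 3-cycles in $A_d\subseteq G_\alpha$. Set $\beta:=M^{(n)}(\alpha)$, $g_i:=\log|\alpha_i|$, and $L:=\log\beta=\sum_i e_ig_i$. Since the $e_i$ are pairwise distinct we have $n\geq 1$, so $\beta$ is a real Mahler measure and $\beta\geq 1$; in fact $L>0$, since $\beta=1$ would iterate back to $\alpha$ being a root of unity, whose Galois closure is abelian and thus cannot contain $A_d$ for $d\geq 5$. By Lemma~\ref{lem:Mnreal} the conjugates of $\beta$ are the distinct elements $\sigma\cdot\beta$ for $\sigma\in G_\alpha$, and $\beta$ is an algebraic unit, so
\[
M(\beta)=\prod_{\sigma\in G_\alpha}\max(1,|\sigma\cdot\beta|)\geq\beta,
\]
with equality iff $|\sigma\cdot\beta|\leq 1$ for every $\sigma\neq\id$. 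The proposition thus reduces to producing a single $\sigma\in G_\alpha\setminus\{\id\}$ with $|\sigma\cdot\beta|>1$.

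Suppose for contradiction that $\log|\sigma\cdot\beta|\leq 0$ for every nontrivial $\sigma\in G_\alpha$; in particular this holds for every 3-cycle in $A_d$. A direct expansion gives, for the 3-cycle $\sigma=(i\,j\,k)$,
\[
\Delta_{ijk}:=\log|\sigma\cdot\beta|-L=(e_k-e_i)g_i+(e_i-e_j)g_j+(e_j-e_k)g_k,
\]
so the hypothesis reads $\Delta_{ijk}\leq -L$ for every 3-cycle. Summing the two 3-cycles supported on the same triple $\{i,j,k\}$ telescopes into
\[
\Delta_{ijk}+\Delta_{ikj}=T_{ijk}G_{ijk}-3U_{ijk}\leq-2L,
\]
where $T_{ijk}=e_i+e_j+e_k$, $G_{ijk}=g_i+g_j+g_k$, and $U_{ijk}=e_ig_i+e_jg_j+e_kg_k$. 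Summing this bound over the $\binom{d}{3}$ triples and using $\sum_p g_p=0$ (unit norm of $\alpha$) together with $\sum_p e_p g_p=L$, a standard count of how often each ordered pair $(p,q)$ appears in a 3-subset of $\{1,\ldots,d\}$ produces
\[
\sum_{\{i,j,k\}}U_{ijk}=\binom{d-1}{2}L\quad\text{and}\quad\sum_{\{i,j,k\}}T_{ijk}G_{ijk}=\binom{d-2}{2}L,
\]
so the summed hypothesis becomes $\bigl(\binom{d-2}{2}-3\binom{d-1}{2}\bigr)L\leq-2\binom{d}{3}L$. Clearing denominators collapses this to $\tfrac{d(d-2)(d-4)}{3}L\leq 0$, which is absurd for $d\geq 5$ and $L>0$.

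The main obstacle is the combinatorial identity $\sum_{\{i,j,k\}}T_{ijk}G_{ijk}=\binom{d-2}{2}L$: the cross terms $e_pg_q$ with $p\neq q$ must be collected using $\sum_p g_p=0$ before one recognises the closed form in $L$. Once that is in hand, the arithmetic inequality $\binom{d-2}{2}+2\binom{d}{3}>3\binom{d-1}{2}$ for $d\geq 5$ is routine, and it is sharp at $d=4$, consistent with the separate treatment of degree-$4$ units in Theorem~\ref{thm:degree4}.
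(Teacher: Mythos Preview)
Your proof is correct and uses essentially the same idea as the paper: both arguments average over the set of $3$-cycles in $A_d\subseteq G_\alpha$ and exploit $\sum_p g_p=0$ to collapse the cross terms. The paper computes the product $\bigl|\prod_{\tau\in Z_3}\tau\cdot\beta\bigr|$ directly and shows it equals $\beta^{c}$ with $c=\frac{(d-1)(d-2)(d-3)}{3}-(d-2)=\frac{d(d-2)(d-4)}{3}>1$, then bounds $M(\beta)\geq\beta^c>\beta$; you take logs, assume for contradiction that every nontrivial conjugate has modulus $\leq 1$, and sum the resulting inequalities over the same $3$-cycles to reach the identical constant $\frac{d(d-2)(d-4)}{3}$. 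The contradiction framing is a cosmetic difference---your summed inequality $\sum_{\tau\in Z_3}\log|\tau\cdot\beta|\leq 0$ is exactly the negation of the paper's $\sum_{\tau\in Z_3}\log|\tau\cdot\beta|=cL>L>0$---so the two proofs are really the same computation read in opposite directions. (As a side remark, your justification that $L>0$ is fine, but Lemma~\ref{lem:Mnreal} already gives it for free: with pairwise distinct $e_i$ the degree of $\beta$ is $\#G_\alpha\geq 60$, so $\beta\neq 1$.)
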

\begin{proof}
We denote by $Z_3$ the set of $3$-cycles in $G_{\alpha}\subseteq S_d$. For any $k\in\{1,\ldots,d\}$, the number of $3$-cycles which fix $k$ is equal to $\frac{(d-1)(d-2)(d-3)}{3}$. For any pair $k\neq k'\in\{1,\ldots,d\}$, the number of $3$-cycles sending $k$ to $k'$ is $(d-2)$. Therefore,
\begin{multline}\label{eq:3cycleprod}
\left\vert \prod_{\tau \in Z_3} \tau \cdot M^{(n)}(\alpha) \right\vert \\
= \left\vert \alpha_1^{\frac{(d-1)(d-2)(d-3)}{3}e_1 + (d-2)\sum_{k\neq 1} e_k} \cdot \ldots \cdot \alpha_d^{\frac{(d-1)(d-2)(d-3)}{3}e_d + (d-2)\sum_{k\neq d} e_k}  \right\vert.
\end{multline}
Since $\alpha$ is an algebraic unit, we have $\prod_{j=1}^d \alpha_j^{\sum_{k=1}^d e_k}=\pm 1$. Hence, the value in \eqref{eq:3cycleprod} is equal to
\[
\left\vert  \prod_{j=1}^d \alpha_j^{\left(\frac{(d-1)(d-2)(d-3)}{3}-(d-2)\right)e_j} \right\vert = M^{(n)}(\alpha)^{\frac{(d-1)(d-2)(d-3)}{3}-(d-2)}>M^{(n)}(\alpha).
\]
The last inequality follows from our general hypothesis that $d\geq 5$. Since $e_1,\ldots,e_d$ are assumed to be pairwise distinct, it follows from Lemma \ref{lem:Mnreal} that the factors $\tau\cdot M^{(n)}(\alpha)$ in \eqref{eq:3cycleprod} are also pairwise distinct conjugates of $M^{(n)}(\alpha)$. In particular 
\[
M^{(n+1)}(\alpha) = M(M^{n}(\alpha)) \geq \left\vert \prod_{\tau \in Z_3} \tau \cdot M^{(n)}(\alpha) \right\vert > M^{(n)}(\alpha)
\]
which is what we needed to prove.
\end{proof}

\begin{lemma}\label{lem:exprop}
Let $n\in\mathbb{N}$ and let $M^{(n)}(\alpha)=\alpha_1^{e_1}\cdots\alpha_d^{e_d}$. Then we have:
\begin{enumerate}[(a)]
\item \label{enum:a}$e_i \geq e_{i+1}$ for all but at most one $i\in\{1,\ldots,d-1\}$.
\item \label{enum:b}If $e_i< e_{i+1}$ for some $i\in\{2,\ldots,d-1\}$, then $e_{i-1} > e_{i+1}$.
\item \label{enum:c}If $e_i<e_{i+1}$ for some $i\in\{1,\ldots,d-2\}$, then $e_{i} > e_{i+2}$.
\item \label{enum:d}If $e_i < e_{i+1}$ for some $i\in\{1,\ldots,d-1\}$, then 
\[
e_1 > e_2 > \cdots > e_{i-1} > e_{i+1} > e_i >  e_{i+2} > e_{i+3} > \cdots > e_d.
\]
\end{enumerate}
\end{lemma}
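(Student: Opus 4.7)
The plan is to deduce all four parts from a single principle: the Perron property of $\beta := M^{(n)}(\alpha)$, applied to carefully chosen elements of $A_d \subseteq G_\alpha$. The case $n = 0$ is immediate since the exponent vector is $(1, 0, \ldots, 0)$, so assume $n \geq 1$ and (harmlessly) that $\alpha$ is not a root of unity. By the theorem of Adler and Marcus, $\beta$ is a Perron number, so $\beta > |\sigma \cdot \beta|$ strictly for every $\sigma \in G_\alpha$ that does not fix $\beta$. Writing $w_j := \log|\alpha_j|$ and using $\log|\sigma \cdot \beta| = \sum_j e_{\sigma^{-1}(j)} w_j$, this translates into
\begin{equation*}
\Delta(\sigma) \; := \; \sum_{j=1}^{d} (e_j - e_{\sigma^{-1}(j)})\, w_j \; > \; 0 \quad \text{for every non-stabilizing } \sigma \in G_\alpha,
\end{equation*}
where Lemma~\ref{lem:newGal} identifies the stabilizer via equalities among the $e_j$'s. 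Setting $a_j := w_j - w_{j+1} \geq 0$, evaluating $\Delta$ on the permutations below always produces a sum of terms of the shape $a_\ell \cdot (e_p - e_q)$.

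To prove (a), I rule out two coexisting ascents. If they occur at $i < i'$ with $i' \geq i+2$, the disjoint double transposition $(i,i+1)(i',i'+1) \in A_d$ is non-stabilizing (since $e_i \neq e_{i+1}$) and yields $\Delta = (e_i-e_{i+1})\,a_i + (e_{i'}-e_{i'+1})\,a_{i'}$, a sum of two non-positive terms, contradicting $\Delta > 0$. For the adjacent case $i' = i+1$, the 3-cycle $(i,i+1,i+2)$ is non-stabilizing, and regrouping gives $\Delta = a_i(e_i-e_{i+2}) + a_{i+1}(e_{i+1}-e_{i+2})$; the two ascents force $e_i, e_{i+1} < e_{i+2}$, making both summands negative, again contradicting $\Delta > 0$.

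Claims (b) and (c) follow analogously from the 3-cycles $(i-1,i,i+1)$ and $(i,i+2,i+1)$, each non-stabilizing under the ascent $e_i < e_{i+1}$ by Lemma~\ref{lem:newGal}(a). The corresponding expansions give $\Delta = a_{i-1}(e_{i-1}-e_{i+1}) + a_i(e_i-e_{i+1})$ for (b), and $\Delta = a_i(e_i-e_{i+1}) + a_{i+1}(e_i-e_{i+2})$ for (c); in each case the ascent makes the summand containing $(e_i-e_{i+1})$ strictly negative, so positivity of $\Delta$ forces the remaining summand to be strictly positive, yielding $e_{i-1} > e_{i+1}$ and $e_i > e_{i+2}$, respectively. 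For (d), strict monotonicity of $(e_j)$ off $\{i, i+1\}$ is obtained by applying the double transposition $(j,k)(i,i+1) \in A_d$ for any $j < k$ with $\{j,k\} \cap \{i,i+1\} = \emptyset$: the same splitting gives $\Delta = (e_j-e_k)(w_j-w_k) + (e_i-e_{i+1})\,a_i > 0$, and negativity of the second summand forces $e_j > e_k$. Combined with (b) and (c), this assembles the full chain of inequalities in (d).

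The principal subtlety is the degenerate possibility $a_j = 0$, which can occur only for a complex-conjugate pair of conjugates appearing at consecutive positions in the ordering; reality of $\beta$ together with Lemma~\ref{lem:newGal} then forces $e_j = e_{j+1}$ at any such position, so no ascent can occur there and the arguments above apply unchanged on the remaining indices. This degeneracy, together with the choice of precisely those permutations in $A_d$ that produce the desired cancellations, is the main technical point; everything else is a routine expansion of $\Delta(\sigma)$.
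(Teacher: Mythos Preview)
Your approach is essentially the paper's: exploit that $M^{(n)}(\alpha)$ is a Perron number by testing it against carefully chosen even permutations (double transpositions and $3$-cycles), invoking Lemma~\ref{lem:newGal} to certify that these permutations do not stabilize $\beta$. The logarithmic repackaging via $\Delta(\sigma)$ and the telescoping into the increments $a_j = w_j - w_{j+1}$ is a clean way to organize the same inequalities the paper manipulates directly, and your case split in (a) (adjacent vs.\ non-adjacent ascents) is equivalent to the paper's (a chain $e_i<e_j<e_k$ vs.\ two ascents on four indices).

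Two remarks. First, in (b) and (c) you say the ascent makes the summand $a_i(e_i-e_{i+1})$ ``strictly negative''; in fact you only know it is $\le 0$ (you have not ruled out $a_i=0$). This is harmless: from $\Delta>0$ and one summand $\le 0$ you still conclude the other summand is strictly positive, and since that summand is $a_{i-1}(e_{i-1}-e_{i+1})$ (resp.\ $a_{i+1}(e_i-e_{i+2})$) with $a_{i-1},a_{i+1}\ge 0$, positivity of the product forces both factors positive, giving the desired strict inequality on the exponents. The same observation applies in (d). So your main argument is complete as stated, with ``strictly negative'' weakened to ``$\le 0$''.

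Second, and more importantly, your final paragraph is both unnecessary and incorrect. The claim that $a_j=0$ can occur \emph{only} for a complex-conjugate pair is false: nothing prevents $|\alpha_j|=|\alpha_{j+1}|$ for other reasons (e.g.\ $\alpha_{j+1}=-\alpha_j$ with both real, or an accidental equality). Moreover, even granting a complex-conjugate pair at $j,j+1$, reality of $\beta$ only says that complex conjugation (which may be a product of many disjoint transpositions, not covered by Lemma~\ref{lem:newGal}) fixes $\beta$; this does not immediately force $e_j=e_{j+1}$. Fortunately, as noted above, none of your inequalities actually require $a_j>0$: the Perron strict inequality $\Delta>0$ does all the work. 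You should simply delete the degeneracy paragraph; the paper's proof likewise never needs to treat $|\alpha_j|=|\alpha_{j+1}|$ separately.
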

\begin{proof}
It is known that $M^{(n)}(\alpha)$ is a Perron number, which means that 
$M^{(n)}(\alpha)$ does not have a Galois conjugate of the same or larger modulus (cf. 
\cite{DubickasNrsMM} for this and other properties of values of the Mahler measure). 
This fact will be used several times in the following proof.

To prove \eqref{enum:a}, we have two cases: there are three distinct elements $1\leq i<j<k\leq d$ such that $e_i < e_j < e_k$, or else there exist $1\leq i<j<k<l\leq d$ such that $e_i < e_j$ and $e_k < e_l$. Assume first that there are three distinct elements $1\leq i<j<k\leq d$ such that $e_i < e_j < e_k$. Recall that by definition we have $\vert \alpha_i\vert \geq \vert \alpha_j\vert \geq \vert \alpha_k\vert$. Therefore $\vert \alpha_k \vert^{e_k -e_i} \leq \vert \alpha_j \vert^{e_k-e_i}$, which implies
\begin{align*}
 & \underbrace{\vert \alpha_i \vert^{e_i-e_j}}_{\leq \vert \alpha_j \vert^{e_i-e_j}} \cdot \vert \alpha_j\vert^{e_j-e_k} \cdot \vert \alpha_k\vert^{e_k-e_i} \leq \vert \alpha_j \vert^{e_i-e_k} \cdot \vert \alpha_k\vert^{e_k - e_i} \leq 1 \\
 \Longrightarrow ~ & \vert \alpha_i^{e_i} \cdot \alpha_j^{e_j} \cdot \alpha_k^{e_k} \vert \leq \vert \alpha_i^{e_j} \cdot \alpha_j^{e_k} \cdot \alpha_k^{e_i} \vert\\
 \Longrightarrow ~ & \vert M^{(n)}(\alpha) \vert \leq \vert (i,k,j)\cdot M^{(n)}(\alpha) \vert.
\end{align*}
By Lemma \ref{lem:newGal}, $(i,j,k)\cdot M^{(n)}(\alpha) \neq  M^{(n)}(\alpha)$ is a Galois conjugate of $M^{(n)}(\alpha)$. This contradicts the fact that $M^{(n)}(\alpha)$ is a Perron number. In particular, it is not possible that $e_i> e_{i+1} > e_{i+2}$ for any $i\in\{1,\ldots,d-2\}$.

Now assume that we have $1\leq i<j<k<l \leq d$ such that $e_i < e_j$ and $e_k < e_l$. Then $\vert \alpha_i \vert \geq \vert \alpha_j \vert$ and $\vert \alpha_k \vert \geq \vert \alpha_l \vert$ imply
\[
\vert \alpha_i \vert^{e_j - e_i} \cdot \vert \alpha_k \vert^{e_l -e_k} \geq \vert \alpha_j \vert^{e_j - e_i} \cdot \vert \alpha_l \vert^{e_l -e_k},
\]
and hence
\[
\vert \alpha_i^{e_i} \cdot \alpha_j^{e_j} \cdot \alpha_k^{e_k} \cdot \alpha_l^{e_l} \vert \leq \vert \alpha_i^{e_j} \cdot \alpha_j^{e_i} \cdot \alpha_k^{e_l} \cdot \alpha_l^{e_k} \vert .
\]
This, however, is equivalent to $\vert M^{(n)}(\alpha)\vert \leq \vert (i,j)(k,l)\cdot M^{(n)}(\alpha) \vert$, which is not possible by Lemma \ref{lem:newGal}, since $M^{(n)}(\alpha)$ is a Perron number. This proves part \eqref{enum:a} of the lemma.

In order to prove part \eqref{enum:b}, we assume for the sake of contradiction that $e_i<e_{i+1}$ but $e_{i-1}\leq e_{i+1}$ for some $i\in\{2,\ldots,d-1\}$. By part \eqref{enum:a}, since we already have $e_i<e_{i+1}$, we know that $e_{i-1}\geq e_i$. We have 
\begin{align*}
& (i-1,i,i+1)\cdot \vert \alpha _{i-1}\vert ^{e_{i-1}}\vert \alpha_{i}\vert ^{e_{i}}\vert \alpha_{i+1}\vert ^{e_{i+1}}\\
 &=\vert \alpha _{i}\vert ^{e_{i-1}}\vert \alpha_{i+1}\vert ^{e_{i}}\vert \alpha_{i-1}\vert ^{e_{i+1}}
\end{align*} Now,
 \begin{align*}
 & \vert \alpha _{i-1}\vert ^{e_{i-1}-e_{i+1}}\vert \alpha_{i}\vert ^{e_{i}-e_{i-1}}\vert \alpha_{i+1}\vert ^{e_{i+1}-e_{i}}\\
 &=\vert \alpha _{i-1}\vert ^{e_{i-1}-e_{i+1}}\vert \alpha_{i}\vert ^{e_{i}-e_{i-1}}\vert \alpha_{i+1}\vert ^{e_{i+1}-e_{i-1}}\vert \alpha_{i+1}\vert ^{e_{i-1}-e_{i}}\\
 &\leq \vert \alpha _{i-1}\vert ^{e_{i-1}-e_{i+1}}\vert \alpha_{i}\vert ^{e_{i}-e_{i-1}}\vert \alpha_{i-1}\vert ^{e_{i+1}-e_{i-1}}\vert \alpha_{i}\vert ^{e_{i-1}-e_{i}}\\
 &= 1
\end{align*}
Therefore,
\[\vert M^{(n)}(\alpha) \vert \leq \vert (i-1,i,i+1)\cdot M^{(n)}(\alpha) \vert,\] giving a contradiction to $M^{(n)}(\al)$ being a Perron number.

Similarly, if $e_i< e_{i+1}$ and $e_i\leq e_{i+2}$, then we know by (a) that $e_{i+1}\geq e_{i+2}$. This implies that $\vert M^{(n)}(\alpha) \vert \leq \vert (i,i+2,i+1)\cdot M^{(n)}(\alpha)\vert$. This proves part \eqref{enum:c}.

So far we have proven that if $e_i<e_{i+1}$ for some $i\in\{1,\ldots,d-1\}$, then we have 
\[
e_1 \geq e_2 \geq \ldots \geq e_{i-1} > e_{i+1} > e_i >  e_{i+2} \geq e_{i+3} \geq \ldots \geq e_d.
\]
We need to show that all of the above inequalities are strict. Assume that this is not the case, and that $e_k=e_{k+1}$. Then $k,k+1,i,i+1$ must be pairwise distinct. It follows, that $\vert (i,i+1)(k,k+1)\cdot M^{(n)}(\alpha) \vert =  \vert (i,i+1)\cdot M^{(n)}(\alpha) \vert > \vert M^{(n)}(\alpha) \vert$, which is a contradiction. The last inequality just follows from the fact that $\vert \alpha_i \vert^{e_{i+1}} \cdot \vert \alpha_{i+1}\vert^{e_i} > \vert \alpha_i \vert^{e_{i}} \cdot \vert \alpha_{i+1}\vert^{e_{i+1}}$.
\end{proof}

\begin{lemma}\label{lem:exp=1}
Let $f_1\geq f_2\geq \ldots\geq f_k\geq 0$ be integers, with $f_1 \geq 1$, and let $a_1\geq a_2\geq\ldots\geq a_d >0$ be real numbers such that $\prod_{i=1}^k a_i>1$. Then $\prod_{i=1}^k a_i^{f_i} >1$.
\end{lemma}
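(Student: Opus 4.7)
The plan is to take logarithms and apply Abel summation (summation by parts) to reduce the problem to showing that every partial product $\prod_{i=1}^{j} a_i$ is greater than 1 for $1 \leq j \leq k$.

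More concretely, I would first set $L_j = \sum_{i=1}^{j}\log a_i$ for $0 \leq j \leq k$, so that $L_0 = 0$ and $L_k > 0$ by hypothesis. Abel summation gives
\[
\sum_{i=1}^{k} f_i \log a_i \;=\; f_k L_k + \sum_{i=1}^{k-1}(f_i - f_{i+1}) L_i,
\]
and since the $f_i$ are nonnegative and weakly decreasing, every coefficient $f_k$ and $(f_i - f_{i+1})$ on the right is $\geq 0$. Moreover, since $f_1 \geq 1$, at least one of these coefficients is strictly positive (either $f_k \geq 1$, or else the largest index $i_0$ with $f_{i_0} > 0$ satisfies $i_0 < k$ and gives $f_{i_0} - f_{i_0+1} = f_{i_0} > 0$).

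The key step, and the only nontrivial one, is to verify that $L_j > 0$ for every $1 \leq j \leq k$. This follows from the concavity of the sequence $j \mapsto L_j$: the increments $L_{j+1} - L_j = \log a_{j+1}$ are weakly decreasing in $j$ because $a_1 \geq a_2 \geq \cdots \geq a_k$. A concave sequence with $L_0 = 0$ and $L_k > 0$ satisfies $L_j \geq (j/k)\,L_k$ for $0 \leq j \leq k$, which is strictly positive for $j \geq 1$. (Alternatively, one can argue directly: if $L_j \leq 0$ for some $1 \leq j < k$, then the average of $\log a_i$ over $i \leq j$ is $\leq 0$, so by monotonicity of the $a_i$ the average of $\log a_i$ over $j < i \leq k$ is also $\leq 0$, which forces $L_k \leq 0$, a contradiction.)

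Combining these two pieces, the Abel-summed expression is a sum of nonnegative terms with at least one strictly positive summand whose accompanying $L_i$ is strictly positive; hence $\sum_{i=1}^{k} f_i \log a_i > 0$, i.e.\ $\prod_{i=1}^{k} a_i^{f_i} > 1$. The main obstacle is the positivity of the partial sums $L_j$, which is exactly where the monotonicity assumption $a_1 \geq \cdots \geq a_k$ is essential; everything else is bookkeeping.
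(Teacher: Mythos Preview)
Your proof is correct. The Abel summation identity is right, the coefficients $f_k$ and $f_i-f_{i+1}$ are indeed nonnegative with at least one strictly positive (using $f_1\geq 1$), and the key fact $L_j>0$ for $1\leq j\leq k$ is properly justified by concavity (or by your direct averaging argument).

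Your approach differs from the paper's. The paper argues by induction on $k$: in the inductive step it factors
\[
\prod_{i=1}^{k+1} a_i^{f_i} \;=\; \Bigl(\prod_{i=1}^{k+1} a_i\Bigr)^{f_{k+1}} \cdot \prod_{i=1}^{k} a_i^{\,f_i - f_{k+1}},
\]
checks that $\prod_{i=1}^{k} a_i>1$ (by a short case split on whether $a_{k+1}\leq 1$), and applies the inductive hypothesis to the second factor. This is precisely one step of your Abel summation, peeled off recursively rather than written out all at once. Both proofs ultimately hinge on the same observation---that every partial product $\prod_{i\leq j} a_i$ exceeds $1$---but your version isolates that fact explicitly and handles all indices simultaneously, which is arguably cleaner and makes the role of the monotonicity hypothesis on the $a_i$ more transparent. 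The paper's inductive argument, on the other hand, never needs to invoke concavity as a concept and verifies the partial-product positivity only for the single index required at each step.
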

\begin{proof}
We prove the statement by induction on $k$, where the base case $k=1$ is trivial. Now assume that the statement is true for $k$ and that there are real numbers $a_1\geq\ldots\geq a_{k+1} >0$, with $\prod_{i=1}^{k+1}a_i > 1$, and integers $f_1\geq \ldots\geq f_{k+1} \geq 0$, with $f_1\geq 1$. If $f_1=f_{k+1}$, then the claim follows immediately. Hence, we assume $f_1 > f_{k+1}$. Set $f_i'=f_i-f_{k+1}$ for all $i\in\{1,\ldots,k+1\}$. Then
\[
f_1'\geq f_2'\geq \ldots f_k'\geq f_{k+1}'=0 \text{ and } f_1'\geq 1.
\]
Moreover, $\prod_{i=1}^k a_i$ is either greater than or equal to $\prod_{i=1}^{k+1} a_i >1$ (if $a_{k+1}\leq 1$), or it is a product of real numbers $>1$. Hence, our induction hypothesis states $\prod_{i=1}^k a_i^{f_i'} >1$. This implies
\[
\prod_{i=1}^{k+1} a_i^{f_i} = \underbrace{\left(\prod_{i=1}^{k+1} a_i \right)^{f_{k+1}}}_{\geq 1} \cdot \left( \prod_{i=1}^k a_i^{f_i'} \right) > 1,
\]
proving the lemma. 
\end{proof}

\begin{prop}\label{prop:ordered}
Let $M^{(n)}(\alpha)=\alpha_1^{e_1}\cdots\alpha_d^{e_d}$. If $e_{i+1}\leq  e_i$ for all $i\in\{1,\ldots,d-1\}$, then $M^{(n+1)}(\alpha)> M^{(n)}(\alpha)$.
\end{prop}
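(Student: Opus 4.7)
The goal is to exhibit a Galois conjugate $\gamma \neq M^{(n)}(\alpha)$ of $M^{(n)}(\alpha)$ with $|\gamma| > 1$: combined with the Perron property and the fact that $|M^{(n)}(\alpha)| > 1$, this will force $M^{(n+1)}(\alpha) \geq |M^{(n)}(\alpha)| \cdot |\gamma| > M^{(n)}(\alpha)$. I would split into two cases. If the exponents $e_1, \ldots, e_d$ are pairwise distinct, the conclusion follows directly from Proposition~\ref{prop:expdiffer}. Otherwise, there is a repetition $e_p = e_q$. Since the stabilizer of $M^{(n)}(\alpha)$ in $G_{\alpha} \supseteq A_d$ then contains the double transpositions $(p,q)(i,j)$ for suitable $i,j$ outside $\{p,q\}$, every rearrangement of the multiset $\{e_1, \ldots, e_d\}$ is realized as a Galois conjugate of $M^{(n)}(\alpha)$ via some element of $A_d$. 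Moreover, not all $e_i$ can be equal: if they were, then $M^{(n)}(\alpha) = \pm 1$, contradicting $M^{(n)}(\alpha) > 1$ (which holds since $\alpha$ is not a root of unity under our standing hypothesis $r \geq 2$).

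Setting $a_i = \log|\alpha_i|$ we have $a_1 \geq \cdots \geq a_r > 0 \geq a_{r+1} \geq \cdots \geq a_d$ with $a_{d-1} < 0$ and $\sum a_i = 0$ (since $\alpha$ is a unit). The log-absolute value of the conjugate corresponding to a rearrangement $(f_1, \ldots, f_d)$ of the multiset is $\sum f_i a_i$; the objective is to produce such a rearrangement, distinct from $(e_1, \ldots, e_d)$ as a sequence, for which this sum is positive. I would accomplish this by averaging $\sum f_i a_i$ over a carefully chosen family of non-identity rearrangements: the relation $\sum a_i = 0$ introduces cancellations which reduce the total to a positive multiple of a quantity such as $\sum_{i \leq r} a_i > 0$, forcing at least one member of the family to achieve $\sum f_i a_i > 0$. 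Lemma~\ref{lem:exp=1} supplies the key auxiliary input here, since it ensures that partial products $\prod_{i=1}^k |\alpha_i|^{f_i}$ over the top indices $1, \ldots, r$ are strictly greater than $1$ for any weakly decreasing assignment $f$, bounding the contribution from the ``top'' part of the conjugate from below.

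The principal obstacle is choosing the right family of rearrangements so that the averaging works uniformly across all block structures of the multiset $\{e_i\}$. In the extreme cases such as block structure $(d-1,1)$ or $(1,d-1)$ (where all but one exponent agree), the conjugates simplify to $\pm\alpha_i^c$ for varying $i$, and the standing assumption $r \leq d-2$ together with $|\alpha_{d-1}| < 1$ immediately produces multiple conjugates of absolute value greater than $1$. For intermediate block structures the analysis is more delicate: the Perron-type penalty $(e_i - e_{i+1})(a_i - a_{i+1})$ incurred by individual adjacent swaps can in principle exceed $\log|M^{(n)}(\alpha)|$, so the family of rearrangements and the averaging calculation must be tailored to the block structure in order to guarantee a net positive sum and, separately, to certify that the chosen rearrangement is distinct as a sequence from $(e_1, \ldots, e_d)$.
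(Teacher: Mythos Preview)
Your overall strategy---exhibit a non-trivial Galois conjugate of $M^{(n)}(\alpha)$ lying outside the unit circle---is exactly the paper's, and your reduction to the case of a repeated exponent via Proposition~\ref{prop:expdiffer} is also the same. Your observation that, once some $e_p=e_q$, every rearrangement of the multiset $\{e_1,\ldots,e_d\}$ is realized by an element of $A_d$ is correct and useful.

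The gap is that your main step is only a plan: you propose to average $\sum f_i a_i$ over a ``carefully chosen family'' of rearrangements but do not specify the family, and you yourself flag the choice as the principal obstacle. Note that averaging over \emph{all} rearrangements gives $\bar e\sum_i a_i=0$, so a genuine choice must be made. The paper avoids averaging altogether by constructing a single explicit conjugate. After normalizing so that $e_d=0$ (using that $\alpha$ is a unit), set $s=\max\{i:e_i\neq 0\}$. Using the repetition $e_i=e_{i+1}$, one of the even permutations $(i,i+1)(s,s+1)$, $(s-1,s+1,s)$, or $(s,s+1,s+2)$ sends $M^{(n)}(\alpha)$ to
\[
\gamma=\alpha_1^{e_1}\cdots\alpha_{s-1}^{e_{s-1}}\,\alpha_{s+1}^{e_s},
\]
which is therefore a non-trivial conjugate. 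One then checks by a short case split on $s$ versus $r$ (the case $s=r$ using $d\geq 5$) that $|\alpha_1\cdots\alpha_{s-1}\alpha_{s+1}|>1$, and Lemma~\ref{lem:exp=1}, applied with the weakly decreasing exponents $e_1\geq\cdots\geq e_{s-1}\geq e_s\geq 1$, upgrades this to $|\gamma|>1$. This single well-chosen ``shift'' of the last nonzero exponent from position $s$ to $s+1$ handles all block structures uniformly and sidesteps the difficulties you anticipate.
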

\begin{proof}
We show that $M^{(n)}(\alpha)$ has a non-trivial Galois conjugate outside the unit circle. This immediately implies the claim.

Since $\alpha$ is an algebraic unit, we may assume that $e_d=0$. Note however, that this uses our assumption $e_{i+1}\leq e_{i}$ for all $i$. We set
\[
s:=\max\{i\in\{1,\ldots,d\}\vert e_i\neq 0\}.
\]
By Proposition \ref{prop:expdiffer} we may assume that we have $e_i=e_{i+1}$ for some $i\in\{1,\ldots,d-1\}$. This $i$ is not equal to $s$, since $e_s\neq 0 =e_{s+1}$ by definition. If $i\notin\{s-1,s+1\}$, then $(i,i+1)(s,s+1)\cdot M^{(n)}(\alpha)=\alpha_1^{e_1}\cdots \alpha_{s-1}^{e_{s-1}}\alpha_s^{e_{s+1}}\alpha_{s+1}^{e_s}$. If $i=s-1$, then $(s-1,s+1,s)\cdot M^{(n)}(\alpha)=\alpha_1^{e_1}\cdots \alpha_{s-2}^{e_{s-2}}\alpha_{s-1}^{e_{s}}\alpha_s^{e_{s+1}}\alpha_{s+1}^{e_{s-1}}=\alpha_1^{e_1}\cdots \alpha_{s-1}^{e_{s-1}}\alpha_s^{e_{s+1}}\alpha_{s+1}^{e_s}$. If finally $i=s+1$, then $(s,s+1,s+2)\cdot 
 M^{(n)}(\alpha)=\alpha_1^{e_1}\cdots \alpha_{s-1}^{e_{s-1}}\alpha_s^{e_{s+2}}\alpha_{s+1}^{e_{s}}\alpha_{s+2}^{e_{s+1}}=\alpha_1^{e_1}\cdots \alpha_{s-1}^{e_{s-1}}\alpha_s^{e_{s+1}}\alpha_{s+1}^{e_s}$.
 
Since $e_{s+1}=0$, we see that in any case 
\begin{equation}\label{eq:conj}
\alpha_1^{e_1}\cdots \alpha_{s-1}^{e_{s-1}}\alpha_{s+1}^{e_s}  \text{ is a non-trivial Galois conjugate of } M^{(n)}(\alpha).
\end{equation}
We will prove that this Galois conjugate lies outside the unit circle. Again we distinguish several cases.

If $s\leq r-1$, then all of the elements $\alpha_1,\ldots,\alpha_{s+1}$ lie outside the unit circle. Hence $\vert \alpha_1\cdots \alpha_{s-1}\alpha_{s+1} \vert >1$.

If $s\geq r+1$, then  $\vert \alpha_1\cdots \alpha_{s-1}\alpha_{s+1} \vert = \vert \alpha_s \alpha_{s+2}\cdots\alpha_{d}\vert^{-1} >1$, since all of $\alpha_s,\ldots,\alpha_d$ lie inside the closed unit disc and $\vert \alpha_d\vert < 1$.

Lastly, we consider the case $2\leq s=r \leq d-2$. Then surely $\vert \alpha_1 \cdots \alpha_{r-1} \vert \geq \vert \alpha_r \vert$ and $\vert \alpha_{r+1}\vert \geq \vert \alpha_{r+2}\cdots\alpha_{d} \vert$, where the first inequality is strict whenever $r\neq 2$, and the second inequality is strict whenever $r\neq d-2$. By our general assumption it is $d\geq 5$ and hence $\vert \alpha_1\cdots\alpha_{r-1}\alpha_{r+1}\vert > \vert \alpha_r \alpha_{r+2} \cdots\alpha_d\vert$. Since the product of all $\alpha_i$ is $\pm1$, it follows $\vert \alpha_1\cdots \alpha_{s-1}\alpha_{s+1} \vert >1$. 

Hence, in any case we have $\vert \alpha_1\vert \cdots \vert\alpha_{s-1}\vert \cdot \vert\alpha_{s+1} \vert >1$. From our assumption $e_1\geq \ldots \geq e_d$ it follows by Lemma \ref{lem:exp=1} that $\vert \alpha_1^{e_1}\cdots \alpha_{s-1}^{e_{s-1}}\alpha_{s+1}^{e_s}\vert >1$. Therefore, $M^{(n)}(\alpha)$ has a non-trivial Galois conjugate outside the unit circle (see \eqref{eq:conj}). Hence $M^{(n+1)}(\alpha)=M(M^{(n)}(\alpha))> M^{(n)}(\alpha)$.
\end{proof}

We are now ready to prove Theorem \ref{thm:Sd-or-Ad}.
\begin{proof}[Proof of Theorem \ref{thm:Sd-or-Ad}]
As stated at the beginning of this section, we may assume that $d\geq 5$, and that the elements $\pm \al^{\pm1}$ are neither conjugates of a Pisot, nor a Salem number.
Hence, we may assume that the hypothesis \eqref{eq:conjugatedistribution} is met. Let $n\in\mathbb{N}$ be arbitrary. Then for some $e_1,\ldots,e_d \in\mathbb{N}_0$, we have $M^{(n)}(\alpha) =\alpha_1^{e_1}\cdots\alpha_d^{e_d}$. We have seen in Lemma \ref{lem:exprop}, that one of the following statements applies:
\begin{enumerate}[(i)]
\item $e_1\geq e_2 \geq \ldots \geq e_d$, or
\item the integers $e_1,\ldots,e_d$ are pairwise distinct.
\end{enumerate} 
In case (i), we have $M^{(n+1)}(\alpha) > M^{(n)}(\alpha)$ by Proposition \ref{prop:ordered}. In case (ii), we have $M^{(n+1)}(\alpha) > M^{(n)}(\alpha)$ by Proposition \ref{prop:expdiffer}. Hence $\# \cO_M(\alpha)=\infty$.
\end{proof}

\section{Arbitrarily large finite orbit size for units of degree 12}\label{sec:deg-12}
Let $d=4k$, with an integer $k\geq 3$. Now, we will show that there exist algebraic units of degree $d$ with arbitrarily large orbit size, proving Theorem \ref{thm:deg-12}.
\begin{proof}[Proof of Theorem \ref{thm:deg-12}]
Let $\alpha_1, \beta_1$ be positive real algebraic units satisfying:
 \begin{enumerate}
 \item $[\mathbb{Q}(\beta_1):\mathbb{Q}]=2$, $\beta_1>1$,
 \item $\alpha_1$ is a Salem number of degree $2k$.
 \item The fields $\mathbb{Q}(\alpha_1)$ and $\mathbb{Q}(\beta_1)$ are linearly disjoint.
 
 \end{enumerate}
 For any $k\geq 3$ we can indeed find such $\alpha_1$ and $\beta_1$. Since there are Salem numbers of any even degree $\geq 4$ we find an appropriate $\alpha_1$. Now, we take any prime $p$ which is unrammified in $\mathbb{Q}(\alpha_1)$, and let $\beta_1>1$ be an algebraic unit in $\mathbb{Q}(\sqrt{p})$. Note that if the above conditions are met by $\alpha_1$ and $\beta_1$, then they are met by $\alpha_1^{\ell}$ and $\beta_1^{\ell'}$, for any $\ell, \ell' \in \mathbb{N}$. 

We denote the conjugates of $\alpha_1$ by $\alpha_2, \cdots, \alpha_{2k}$, with $\alpha_{2k}=\alpha_1^{-1}$, and the conjugate of $\beta_1$ is $\beta_2=\beta_1^{-1}$. Note that $\alpha_2,\cdots,\alpha_{2k-1}$ all lie on the unit circle.
By assumption (3) the element $\alpha_1\beta_1$ has degree $4k$ and a full set of Galois conjugates of $\alpha_1\beta_1$ is given by
\[
\{\alpha_i\beta_j : (i,j)\in\{1,\ldots,2k\}\times \{1,2\}\}.
\]

 There are two cases. First, if $\beta_1>\alpha_1$, then $\vert\alpha_i\beta_1\vert>1$ for all $i\in\{1,\cdots, 2k\}$ and $\vert \alpha_i\beta_2\vert <\vert\alpha_i\alpha_6\vert\leq 1$ for all $i\in\{1,\cdots, 2k\}$, hence, 
\begin{equation}\label{eqn:deg12-1}
M(\alpha_1\beta_1)=\left|\prod_{n=1}^{2k}\alpha_i\beta_1\right|=\beta_1^{2k}
\end{equation}

For the second case, if $\beta_1<\alpha_1$, then 
\[
\vert\alpha_i\beta_1\vert>1 \iff i\in \{1, \cdots, 2k-1\},\, \text{and} \, \,\vert\alpha_i\beta_2\vert>1 \iff i=1.
\]
Therefore
\begin{equation}\label{eqn:deg12-2}
M(\alpha_1\beta_1)=\vert\alpha_1\beta_1\vert\cdot\left|\prod_{n=2}^{2k-1}\alpha_i\beta_1\right|\cdot\vert\alpha_1\beta_2\vert=\alpha_1^2\beta_1^{2k-2}.
\end{equation}
 We now construct an algebraic unit of degree $4k$ of finite orbit size $>S$. Let $\ell\in \mathbb{N}$ be such that $(\alpha_1^\ell)^{2^S}>\beta_1^{(2k-2)^S}$. Then by \eqref{eqn:deg12-2}, we have $M(\alpha_1^\ell\beta_1)=(\alpha_1^\ell)^2\beta_1^{2k-2}$, $M^{(2)}(\alpha_1^\ell\beta_1)=M((\alpha_1^\ell)^2)(\beta_1^{2k-2}))=(\alpha_1^\ell)^{2^2}\beta_1^{(2k-2)^2},\cdots, M^{(S)}(\alpha_1^\ell\beta_1)=(\alpha_1^\ell)^{2^S}\beta_1^{(2k-2)^S}$. Hence, the orbit size of $\alpha_1^\ell\beta_1$ is greater than $S$. However, there exists $S'>S$ such that $(\alpha_1^\ell)^{2^{S'}}<\beta_1^{(2k-2)^{S'}}$. Assume that $S'$ is minimal with this property. Then we have 
 \[
 M^{(S'+1)}(\alpha_1^\ell\beta_1)=(\alpha_1^\ell)^{2^{S'}}\beta_1^{(2k-2)^{S'}} \stackrel{\eqref{eqn:deg12-1}}{=}(\beta_1^{4^{S'}})^{2k},
 \]
 which is of degree $2$. Therefore, the orbit size of $ \alpha_1^\ell\beta_1 $ is $ S'+2>S $.
\end{proof}

\bibliographystyle{abbrv} 
\bibliography{bib}

\end{document}